\theoremstyle{plain}
\newtheorem{theorem}{Theorem}[section]
\newtheorem{lemma}[theorem]{Lemma}
\newtheorem{corollary}[theorem]{Corollary}
\theoremstyle{definition}
\newtheorem{definition}[theorem]{Definition}
\theoremstyle{remark}
\newtheorem{remark}[theorem]{Remark}
\numberwithin{equation}{section}
\newcommand{\bN}{\mathbb{N}}
\newcommand{\bR}{\mathbb{R}}
\newcommand{\bH}{\mathbb{H}}
\newcommand\cD{\mathcal{D}}
\newcommand\cU{\mathcal{U}}
\newcommand\sW{\mathscr{W}}
\newcommand\sH{\mathscr{H}}
\begin{document}

\title[Dirichlet problem, Parabolic equations, Degenerate coefficients]{On a class of divergence form linear parabolic equations with degenerate coefficients}

\author[T. Phan]{Tuoc Phan}
\address[T. Phan]{Department of Mathematics, University of Tennessee, 227 Ayres Hall,
1403 Circle Drive, Knoxville, TN 37996-1320, USA}
\email{phan@math.utk.edu}

\author[H. V. Tran]{Hung Vinh Tran}
\address[H. V. Tran]{Department of Mathematics, University of Wisconsin-Madision, Van Vleck Hall
480 Lincoln Drive
Madison, WI  53706, USA}
\email{hung@math.wisc.edu}

\thanks{T. Phan is partially supported by the Simons Foundation, grant \# 354889.
H. Tran is supported in part by NSF CAREER grant DMS-1843320 and a Simons Fellowship.
}

\begin{abstract} 
We study a class of linear parabolic equations in divergence form with degenerate coefficients on the upper half space.
Specifically, the equations are considered in $(-\infty, T) \times \bR^d_+$, where $\bR^d_+ = \{x \in \bR^d\,:\, x_d>0\}$ and $T\in {(-\infty, \infty]}$ is given, and the diffusion matrices are the product of $x_d$ and bounded uniformly elliptic matrices, which are degenerate at $\{x_d=0\}$. 
As such, our class of equations resembles well the corresponding class of degenerate viscous Hamilton-Jacobi equations.
We obtain wellposedness results and regularity type estimates in some appropriate weighted Sobolev spaces for the solutions.
 \end{abstract}

\subjclass[2020]{35K65, 35K67, 35K20, 35D30}
\keywords{Degenerate  parabolic equations in divergence form; boundary regularity estimates; existence and uniqueness; weighted Sobolev spaces}

\maketitle
\section{Introduction}
\subsection{Settings}
Let $T \in (-\infty, \infty]$, $d \in \bN$ and denote by
\[
\bR^d_+ = \{x \in \bR^d\,:\, x_d>0\} \quad \text{ and } \quad \Omega_T = (-\infty, T) \times \bR^d_+.
\]
Let  $(a_{ij})_{i,j=1}^d: \Omega_T \rightarrow \mathbb{R}^{d\times d}$ be a matrix of measurable functions satisfying the following ellipticity  and boundedness conditions 
\begin{equation} \label{elli}
\nu|\xi|^2 \leq a_{ij}(t,x) \xi_i \xi_j, \quad |a_{ij}(t,x)| \leq \nu^{-1},  \quad (t,x) \in \Omega_T 
\end{equation}
for $\xi =(\xi_1, \xi_2, \ldots, \xi_d) \in \bR^d$,  where $\nu \in (0,1)$ is given.
Throughout the paper, we write $z= (t,x)  \in \Omega_T$ with $x=(x', x_d) \in \bR^{d-1} \times \bR_+$ where $x' = (x_1, x_2, \ldots, x_{d-1}) \in \bR^{d-1}$.  
We study the following class of  parabolic equations in divergence form with degenerate coefficients
\begin{equation} \label{main-eqn}
 u_t(z) + \lambda c_0(z) u(z) - x_d D_i\big(a_{ij}(z) D_{j} u(z) - F_i\big) = \sqrt{\lambda}  f(z) \quad \text{in} \quad \Omega_T
\end{equation}
under the homogeneous Dirichlet boundary condition
\begin{equation} \label{main-bdr-cond}
u = 0 \quad \text{on} \quad (-\infty, T) \times \partial \bR^d_+.
\end{equation}
Here, $\lambda \geq 0$ and $F_i : \Omega_T \rightarrow \mathbb{R}$, $f: \Omega_T \rightarrow \mathbb{R}$ are given measurable functions. Moreover,  $c_0 : \Omega_T \rightarrow \bR$ is a given measurable function satisfying the boundedness condition
\begin{equation} \label{a-c.conds}
\nu \leq c_0(z) \leq \nu^{-1}, \quad  z = (t,x) \in \Omega_T.
\end{equation}
We note that the equation \eqref{main-eqn} can be written in the form
\begin{equation} \label{main-PDE-sing}
x_d^{-1}\big(u_t(z) + \lambda c_0(z) u(z)\big) - D_i\big(a_{ij}(z) D_{j} u(z) - F_i\big) = \sqrt{\lambda}  x_d^{-1}f(z) \quad \text{in} \quad \Omega_T
\end{equation}
in which the coefficients become singular on the boundary $\{x_d =0\}$ of the considered domain. 
The weight $x_d^{-1}$ appearing in the coefficients of \eqref{main-PDE-sing} is not in the class of $A_2$ Muckenhoupt weights as commonly assumed in literature, and it is even not locally integrable near $\{x_d =0\}$.

The main goal in this paper is to find suitable Sobolev spaces in which the wellposedness and regularity estimates for solutions of \eqref{main-eqn}-\eqref{main-bdr-cond} are established. 
In our main result (Theorem \ref{main-thrm} below), under an assumption on the smallness of the partial mean oscillations of the coefficients in small balls, the following estimate is proved
\begin{equation}\label{show-off}
 \|Du\|_{L_p(\Omega_T)} + \sqrt{\lambda} \|u\|_{L_p(\Omega_T, x_d^{-p/2})}   \leq N \Big[ \|F\|_{L_p(\Omega_T)} + \|f\|_{L_2(\Omega_T, x_d^{-p/2})} \Big],
\end{equation}
for every weak solution $u$ of \eqref{main-eqn}-\eqref{main-bdr-cond} and for $\lambda>0$ sufficiently large, where $p \in (1, \infty)$, $N = N(d, \nu, p)>0$ is some generic constant, and $L_p(\Omega_T, \omega)$ denotes weighted Lebesgue space with weight $\omega$. 
To the best of our knowledge, this paper is the first one in which wellposedness and regularity estimates in Sobolev spaces for the class of equation \eqref{main-eqn} is studied. 
The estimate \eqref{show-off} is therefore completely new.

\subsection{Motivations and relevant literature}
Our main object, equation \eqref{main-eqn}, in its simplest form reads
\begin{equation} \label{simplest-eqn}
 u_t(z) + \lambda u(z) - x_d \Delta u = \sqrt{\lambda}  f(z) \quad \text{in} \quad \Omega_T,
\end{equation}
which resembles well the following degenerate viscous Hamilton-Jacobi equation
\begin{equation} \label{HJ-eqn}
 u_t(z) + \lambda u(z) +H(z,Du)- x_d \Delta u = 0  \quad \text{in} \quad \Omega_T,
\end{equation}
where $H:\Omega_T \times \bR^d \to \bR$ is a given Hamiltonian.
Note that the viscosity coefficient of \eqref{HJ-eqn}  not only is degenerate at $\{x_d=0\}$ but also has linear growth as $x_d \to \infty$.
For typical viscous Hamilton-Jacobi equations with possibly degenerate and bounded diffusions, one often has uniqueness of viscosity solutions, and such solutions are often Lipschitz in $z$ (see \cite{CIL, AT} and the references therein).
Finer regularity of solutions is not very well understood in the literature.
In particular, optimal regularity of solutions to \eqref{HJ-eqn} near $\{x_d=0\}$ has not been investigated.

It is of our goals to study wellposedness and regularity of solutions to  \eqref{main-eqn} (and \eqref{simplest-eqn}), which will pave the way for us to study wellposedness and regularity of solutions to  \eqref{HJ-eqn} later.
We note that the linear growth of the diffusion coefficients at infinity has to be handled carefully.
More specifically, we obtain wellposedness results, $W^{1,p}$ regularity type estimates  for solutions to  \eqref{main-eqn}, and $W^{2,p}$ regularity type estimates  for solutions to \eqref{simplest-eqn} in some appropriate weighted norms.

\medskip

We would like to mention that the literature on regularity theory for degenerate elliptic and parabolic equations is vast.
A particularly relevant equation that was studied much in the literature is
\begin{equation} \label{prototype-eqn}
 u_t(z) + \lambda u(z) - x_d \Delta u -\beta D_d u=f(z) \quad \text{in} \quad \Omega_T.
\end{equation}
Here, $\lambda \geq 0$ and $\beta>0$ are given constants.
Although \eqref{simplest-eqn} and \eqref{prototype-eqn} are quite similar, the requirement that $\beta>0$ is essential in the analysis of  \eqref{prototype-eqn}.
Equation  \eqref{prototype-eqn} is an important model equation appearing in the study of porous media equations and parabolic Heston equations for example.
We refer the readers to \cite{DaHa, Koch, FePo} for the wellposedness and regularity results of \eqref{prototype-eqn} and more general equations of this type.
The Schauder a priori estimates for solutions in weighted H\"older spaces were obtained in \cite{DaHa, FePo}; 
and weighted $W^{2,p}$ estimates for solutions were obtained in \cite{Koch}.
A remarkable point is that the boundary condition of \eqref{prototype-eqn} on $\{x_d=0\}$ may be omitted thanks to its special features.
In contrast, we impose the homogenous Dirichlet boundary condition $u=0$ on $\{x_d=0\}$ for \eqref{main-eqn} and \eqref{simplest-eqn}. 
See Theorem \ref{zero-trace-1} below about the trace of our functional spaces for more information related to the boundary condition.
Naturally, our methods and obtained $W^{1,p}$, $W^{2,p}$ estimates are rather different from those in  \cite{DaHa, Koch, FePo} with different weights, and according to the best of our knowledge, they are new in the literature.

\smallskip
{We also point out that similar results on wellposedness and regularity estimates in weighted Sobolev spaces for equations with singular-degenerate coefficients were established recently in the series of papers \cite{Dong-Phan, Dong-Phan-1, Dong-Phan-2, Dong-Phan-3}. 
Note that in the classes of equations studied in these papers, the weights of singularity/degeneracy arise in a balanced way in both coefficients  of $u_t$ and of $Du$, and this balance is important for the analysis and functional space settings  in \cite{Dong-Phan, Dong-Phan-1,  Dong-Phan-2, Dong-Phan-3}. 
This important structure was pointed out in the classical work \cite{Chi-Se-1, Chi-Se-2} in which  Harnack's inequalities were proved to be false in certain cases if the balance of the weights in the coefficients is lost. 
Because of this fact and due to the structure of \eqref{main-eqn}, the Lipschitz regularity estimates for solutions of homogeneous equations \eqref{main-eqn} are very delicate, and maybe not expected in general perspectives. 
The novelty in our work is to use  $(x_d+\epsilon)^{-1}u$ as a test function for \eqref{main-PDE-sing} with $\epsilon>0$ to discover a hidden special form of energy estimate for \eqref{main-eqn} (see Lemma \ref{w-lemma-0529} below). 
From this, we apply an iteration technique using anisotropic Sobolev embeddings and Hardy's inequality to derive Lipschitz regularity estimates for solutions. 
This is done in Section \ref{subsec:hom}. 
It is important to note that the method still works for systems of equations.}

\smallskip

Besides, we would like mention the classical papers \cite{Fabes, FKS} in which H\"{o}lder's regularity estimates are proved for classes of elliptic equations in which the coefficients are singular and degenerate as $A_2$-Muckenhoupt weights. See also \cite{ MuSt1, MuSt2} for earlier results with more restrictions on the weights.
The $W^{1,p}$-counterpart of the H\"{o}lder's regularity in \cite{Fabes, FKS} was recently established in \cite{CMP} with some additional smallness condition on the weighted mean oscillation of coefficients, which is only valid for sufficiently small $\alpha$ when considering the particular case with the weight $|x|^\alpha$. 
Note also that the classes of equations studied in these mentioned papers are different from \eqref{main-eqn}. 
Moreover, our weight in \eqref{main-eqn} or \eqref{main-PDE-sing} is not an $A_2$-weight, which is an essential assumption used in \cite{Fabes, FKS, CMP}.  
For other classical results that are closely related to our study, we refer to \cite{Fichera, KoNi, OR}.
See also \cite{Lin, Wang2} for other interesting work on regularity estimates of equations with singular degenerate coefficients appearing in geometric analysis.

\medskip

The rest of the paper is organized as follows.
In Section \ref{sec:FA-Results}, we introduce needed functional spaces and state our main results.
Section \ref{sec:L2} is devoted to the study of $L_2$-weak solutions.
Then, in Section \ref{sec:xd}, we study equations with coefficients depending only on $x_d$.
A crucial part of the analysis lies in the pointwise estimates for homogeneous equations in Section \ref{subsec:hom}.
Finally, the proofs of our main results (Theorem \ref{main-thrm} and Corollary \ref{cor-2}) are given in Section \ref{sec:proof-main}.

\section{Functional spaces and statements of main results}\label{sec:FA-Results}
\subsection{Functional spaces and definition of weak solutions}  
For $p \in [1, \infty)$,  $-\infty\le S<T\le +\infty$, and $\cD \subset \bR^d_+$, let $L_p((S,T)\times \cD)$ be  the usual Lebesgue space consisting of measurable functions $u$ on $(S,T)\times \cD$ such that the norm
\[
\|u\|_{L_p( (S,T)\times \cD)}= \left( \int_{(S,T)\times \cD} |u(t,x)|^p\, dxdt \right)^{1/p} <\infty.
\]
For $p\in [1,\infty)$,  and for given weight $\omega$ defined on $(S,T)\times \cD$, we define $L_{p}((S,T)\times \cD,\omega)$ to be the weighted Lebesgue space on $(S,T)\times \cD$ equipped with the norm
\begin{equation*}
\|u\|_{L_{p}((S,T)\times \cD, \omega)}=\left(\int_{S}^T\int_{\cD} |u(t,x)|^p \omega (t,x)\, dx dt\right)^{1/p}.
\end{equation*}
We define the weighted Sobolev space
$$
W^1_p(\cD)=\big\{u\in L_p(\cD, x_d^{-p/2}):\,Du\in L_p(\cD)\big\}
$$
that is equipped with the norm
$$
\|u\|_{W^1_p(\cD)}=\|u\|_{L_p(\cD, x_d^{-p/2})}+\|Du\|_{L_p(\cD)}.
$$
We note that $W^1_p(\cD)$ is different from the usual Sobolev space due to the availability of the weight $x_d^{-p/2}$ in the $L_p$-norm of $u$. As in the standard way, we denote $\sW^1_p(\cD)$ the Sobolev space defined to be the closure in $W^1_p(\cD)$ of all compactly supported functions in $C^\infty(\overline{\cD})$ vanishing near $\overline{\cD} \cap \{x_d=0\}$ if $\overline{\cD} \cap \{x_d=0\}$ is not empty. 
The space $\sW^1_p(\cD)$ is equipped with the norm
$$
\|u\|_{\sW^1_p(\cD)}=\|u\|_{L_p(\cD, x_d^{-p/2})}+\|Du\|_{L_p(\cD)}.
$$
Regarding the relation between the functional spaces $W^1_p(\cD)$ and $\sW^1_p(\cD)$, we have the following trace theorem. 
Though it is not used in the paper, it is important to point out. 
The proof of the theorem is given in Appendix \ref{sec-proof-trace}.

\begin{theorem} \label{zero-trace-1}  
Let $B_1'$ be the unit ball in $\bR^{d-1}$ centered at the origin, $\cD= B_1' \times (0, 1)$, and $p \in [2, \infty)$. 
If $u \in W^1_p(\cD)$, then $u(x', 0) =0$ in the trace sense  for a.e. $x' \in B_1'$.
 Moreover, 
 \[
 W^1_p(\cD) = \sW^{1}_p (\cD).
 \]
\end{theorem}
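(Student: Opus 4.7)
The argument naturally splits into two stages: first show $u(\cdot, 0) = 0$ a.e.\ on $B_1'$, then use this pointwise information to establish the density statement $W^1_p(\cD) = \sW^1_p(\cD)$. For the trace-vanishing step, the starting observation is that $x_d^{-p/2} \geq 1$ on $\cD$, so $W^1_p(\cD)$ embeds continuously into the classical Sobolev space $W^{1,p}(\cD)$ on the bounded Lipschitz domain $\cD = B_1' \times (0,1)$. Hence the usual trace operator applies and $u(\cdot, 0) \in L_p(B_1')$ is well defined. By the absolute-continuity-on-lines characterization of $W^{1,p}$, for a.e.\ $x' \in B_1'$ the slice $u(x', \cdot)$ satisfies
\[
u(x', x_d) - u(x', 0) = \int_0^{x_d} D_d u(x', s)\, ds.
\]
Applying the classical one-dimensional Hardy inequality slicewise and integrating in $x'$ gives
\[
\int_{\cD} \frac{|u(x', x_d) - u(x', 0)|^p}{x_d^p}\, dx \leq C \int_{\cD} |D_d u|^p\, dx.
\]
Since $x_d \in (0,1)$ forces $x_d^{-p/2} \leq x_d^{-p}$, this yields $u - u(\cdot, 0) \in L_p(\cD, x_d^{-p/2})$. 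Combined with the hypothesis $u \in L_p(\cD, x_d^{-p/2})$, I conclude $u(\cdot, 0) \in L_p(\cD, x_d^{-p/2})$, i.e.,
\[
\int_{B_1'} |u(x', 0)|^p \left(\int_0^1 x_d^{-p/2}\, dx_d\right) dx' < \infty.
\]
For $p \geq 2$ the inner integral diverges, so the trace must vanish a.e.

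For the density equality, the inclusion $\sW^1_p(\cD) \subset W^1_p(\cD)$ is immediate from definitions; the content is the reverse. Given $u \in W^1_p(\cD)$, I would fix $\eta \in C^\infty(\bR)$ with $\eta \equiv 0$ on $(-\infty, 1]$ and $\eta \equiv 1$ on $[2, \infty)$, and set $u_\delta(x) := \eta(x_d/\delta)\, u(x)$, which vanishes in the strip $\{x_d \leq \delta\}$. The goal is $u_\delta \to u$ in $W^1_p(\cD)$ as $\delta \downarrow 0$. The contributions $(1-\eta(x_d/\delta))u$ in the weighted norm and $(1-\eta(x_d/\delta))D_i u$ in the unweighted norm are handled directly by dominated convergence. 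The one non-routine term is $\delta^{-1}\eta'(x_d/\delta)\,u$, which is pointwise dominated by $C|u|/x_d \cdot \mathbf{1}_{\{\delta < x_d < 2\delta\}}$. Here I would feed back the trace-vanishing conclusion: since $u(\cdot, 0) = 0$ a.e., the Hardy bound from the first stage gives $u/x_d \in L_p(\cD)$, so dominated convergence again applies. Once $u_\delta$ vanishes in a neighborhood of $\{x_d = 0\}$, a standard mollification, harmless on the remaining Lipschitz parts of $\partial \cD$, produces smooth approximants in $C^\infty(\overline{\cD})$ vanishing near $\{x_d = 0\}$, so $u \in \sW^1_p(\cD)$.

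The principal obstacle, and really the organizing idea of the proof, is Hardy's inequality, which bridges the unweighted derivative norm and the weighted norm on $u$. It simultaneously dictates the sharp restriction $p \geq 2$ (via the non-integrability of $x_d^{-p/2}$ near zero) in the trace claim and supplies the control needed to discard the boundary cutoff in the density argument. Beyond this core input, the remainder of the proof is a careful but routine verification.
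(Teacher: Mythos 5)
Your proof is correct, and the two halves differ in interestingly asymmetric ways from the paper's argument. For the trace-vanishing claim, your route is genuinely different from and, to my mind, more conceptual than the paper's. The paper integrates the pointwise inequality $|u(x',x_d)| \le |u(x',y)| + \int_y^{x_d}|D_d u|$ over $y\in(0,x_d)$, applies H\"older twice to produce the quantitative slab estimate $\|u(\cdot,x_d)\|_{L_p(B_1')}\le N x_d^{1/2-1/p}\|u\|_{W^1_p(B_1'\times[0,x_d])}$, and lets $x_d\to0^+$; the threshold $p\ge2$ enters through the sign of the exponent $1/2-1/p$. You instead embed $W^1_p(\cD)\hookrightarrow W^{1,p}(\cD)$ (legitimate, since $x_d^{-p/2}\ge1$ on $\cD$), invoke the classical trace, run Hardy's inequality slicewise to get $u-u(\cdot,0)\in L_p(\cD,x_d^{-p})\subset L_p(\cD,x_d^{-p/2})$, subtract from the hypothesis $u\in L_p(\cD,x_d^{-p/2})$ to find that the constant-in-$x_d$ extension of the trace lies in $L_p(\cD,x_d^{-p/2})$, and then read off $u(\cdot,0)=0$ from the non-integrability of $x_d^{-p/2}$ at the origin when $p\ge2$. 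Your argument isolates more transparently exactly why $p\ge2$ is the borderline (it is precisely the integrability threshold of the weight), at the small cost of invoking the classical trace theorem, which the paper's direct calculation avoids. The paper's version has the minor bonus of producing an explicit decay rate for $\|u(\cdot,x_d)\|_{L_p(B_1')}$ as $x_d\to0$, though this is not used.

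For the density statement your approach is essentially the paper's: both truncate with a cutoff supported in $\{x_d\lesssim\delta\}$ (your $\eta(x_d/\delta)$ equals $1-\zeta(kx_d)$ with $k=1/\delta$), and in both cases the only delicate contribution is the commutator term involving the derivative of the cutoff. The paper bounds that term by a direct H\"older/FTC computation $|u(x',x_d)|\le x_d^{(p-1)/p}(\int_0^{x_d}|D_du|^p)^{1/p}$, whereas you feed back the trace-vanishing to get $u/x_d\in L_p(\cD)$ via Hardy and then use dominated convergence; the bookkeeping differs but the mechanism is the same. Both proofs (yours and the paper's) treat the final mollification step at the same level of detail, namely as standard, so I see no gap that the paper handles and you omit.
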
 
Next, due to the structure of \eqref{main-eqn}, we define the dual space
\[
\begin{split}
& \bH_{p}^{-1}( (S,T)\times \cD) \\
& =\big\{u\,:\, u  =  x_d D_iF_i +f \ \ \text{for some}\ f\in L_{p}( (S,T)\times \cD, x_d^{-p/2}) \text{ and }\\
& \qquad \quad \ F= (F_1,\ldots,F_d) \in L_{p}((S,T)\times \cD)^{d}\big\},
\end{split}
\]
that is equipped with the norm
\begin{align*}
&\|u\|_{\bH_{p}^{-1}((S,T)\times \cD, \omega)} \\
&=\inf\big\{\|F\|_{L_{p}((S,T)\times \cD)}
+\|f\|_{L_{p}((S,T)\times \cD, x_d^{-p/2})}\,:\, u= x_d D_iF_i +f\big\}.
\end{align*}
Then, we define the space
\[
 \sH_{p}^1((S,T)\times \cD)
 =\big\{u \in L_p((S, T),  \sW^1_p(\cD)):    u_t\in  \bH_{p}^{-1}( (S,T)\times \cD)\big\}
\]
which is equipped with the norm
\begin{align*}
\|u\|_{\sH_{p}^1((S,T)\times \cD)} &= \|u\|_{L_{p}((S,T)\times \cD, x_d^{-p/2})} + \|Du\|_{L_{p}((S,T)\times \cD)} \\
& \qquad +\|u_t\|_{\bH_{p}^{-1}((S,T)\times \cD)}.
\end{align*}
Now, we give the definition of weak solutions for the class of equations \eqref{main-eqn}.
\begin{definition}  \label{weak-form} 
Let $p \in (1, \infty)$, $F \in L_p((S,T)\times \cD)^d$ and $f \in L_p((S,T)\times \cD, x_d^{-p/2})$. 
We say that $u \in \sH_{p}^1((S,T)\times \cD)$ is a weak solution to \eqref{main-eqn} in $(S,T)\times \cD$ with boundary condition $u =0$ on $\overline{\cD} \cap \{x_d =0\}$ when $\overline{\cD} \cap \{x_d =0\} \not=\emptyset$ if
\begin{align*} 
&\int_{(S,T)\times \cD} x_d^{-1}(-u \partial_t \varphi +\lambda c_0(z)u \varphi)\, dz + \int_{(S,T)\times \cD}( a_{ij} D_{j} u  - F_i)D_{i} \varphi\, dz \\  
&= \lambda^{1/2} \int_{(S,T)\times \cD} x_d^{-1} f(z) \varphi(z)\, dz,
\end{align*}
for any $\varphi \in C_0^\infty((S,T)\times \cD)$.
\end{definition}

\subsection{Notations and statement of main results} 
We need some notations to state our results. 
Throughout the paper, for $r >0$ and  $z_0 = (t_0, x_0) \in \bR \times \overline{\bR}^d_+$, we denote $B_r(x_0)$ the ball in $\bR^d$ of radius $r$ centered at $x_0$. 
Moreover, the upper half balls in $\bR^d_+$ centered at $x_0$ and in $\bR^{d+1}_{+}$ centered at $z_0$ are respectively defined by
\[
B_r^+(x_0) = B_r(x_0) \cap \{ x_d >0\} \quad \text{ and } \quad Q_r^+(z_0) = (t_0 - r, t_0] \times B_r^+(x_0).
\] 
For $x_0' \in \bR^{d-1}$, we denote $B'_r(x_0')$ the ball in $\bR^{d-1}$ of radius $r$ and centered at $x_0'$. 
Similarly, for $z_0' = (t_0, x_0') \in \bR \times \bR^{d-1}$, we also denote
\[
Q_r'(z_0') = (t_0 - r, t_0] \times B_r'(x_0').
\]
When $z_0 =0$ or $x_0=0$, we simply write $B_r = B_r(0), B_r^+ = B_r^+(0)$ and $Q_r^+ = Q_r^+(0)$, etc.   
We note that $Q_{r}(z_0)$ and $Q_{r}^+(z_0)$ are respectively just the ball and the upper-half ball in $\bR^{d+1}$ of  radius $r$ centered at $z_0$, and they are not parabolic cylinders as commonly used in the study of parabolic equations.  
We use these balls instead of parabolic cylinders because the equation is not invariant under the usual heat scaling. 

We next give the partial mean oscillation of the coefficients that was first introduced in \cite{MR2338417, MR2300337}.
\begin{definition} \label{a-ossi-def} 
For each $z_0 =(z'_0, x_{d0})\in \overline{\Omega}_T$, and for $\rho>0$,  we denote the partial mean oscillations of $a_{ij}$ and $c_0$ by
\begin{equation*} 
\begin{split}
a^{\#}_\rho(z_0)  & =  \max_{i, j \in \{1, 2, \ldots, d\}} \fint_{Q_\rho^+(z_0)} | a_{ij}(z) -[a_{ij}]_{\rho, z_0}(x_d)| \, dz  \\
&\qquad  + \fint_{Q_\rho^+(z_0)} |c_0(z) -[c_0]_{\rho, z_0}(x_d)|\, dz,
\end{split}
\end{equation*}
where
\begin{equation*}
[a_{ij}]_{\rho, z_0}(x_d) = 
\left\{
\begin{array}{ll}
\displaystyle{\fint_{Q_\rho'(z_0')} a_{ij}(t, x', x_d)\, dx'dt }& \quad \text{if} \quad  i \in\{1, 2,\ldots, d\}, \text{ and }  j  \not=d, \\
\displaystyle{\fint_{Q_\rho^+(z_0)} a_{id} \,dxdt }& \quad \text{for} \quad i =1, 2,\ldots, d, \text{ and }  j  =d,
\end{array} \right.
\end{equation*}
and 
\begin{equation*} 
\begin{split}
& [c_0]_{\rho, z_0}(x_d) = \fint_{Q_\rho'(z_0')} c_0(t, x', x_d) \, dx'dt. 
\end{split}
\end{equation*}
\end{definition}
We note that from the definition that the coefficients $[a_{id}]_{\rho, z_0}$ are constant for all $i = 1, 2,\ldots, d$. 
On the other hand, the other coefficients $[a_{ij}]_{\rho, z_0}$,  and  $[c_0]_{\rho, z_0}$ are functions of $x_d$-variable, for $j \not= d$, as they are correspondingly the averages of $a_{ij}, c_0$ with respect to the variable $z' = (t,x')$ only. 
We also emphasize that we do not impose any symmetry assumption on the coefficient matrix $(a_{ij})$.

We now state the main result of the paper.
\begin{theorem} \label{main-thrm} 
For given $\nu \in (0,1)$ and $p \in (1, \infty)$, there are a sufficiently large number $\lambda_0 = \lambda_0(d, \nu, p)>0$ and a sufficiently small number $\gamma = \gamma(d, \nu, p) >0$ such that the following assertions hold. 
Assume   \eqref{elli}, \eqref{a-c.conds}, and
\begin{equation} \label{a-BMO-cond}
\sup_{z_0 \in \Omega_T}\sup_{\rho \in (0, \rho_0)} a^{\#}_\rho(z_0) \leq \gamma 
\end{equation}
with some $\rho_0>0$. 
Then, for $\lambda \geq \lambda_0\rho_0^{-1}$, $f \in L_p(\Omega_T, x_d^{-p/2})$, and $F \in L_p(\Omega_T)^d$, there exists a unique weak solution $u \in \sH^{1}_{p}(\Omega_T)$ of \eqref{main-eqn}-\eqref{main-bdr-cond}. 
Moreover, 
\begin{equation} \label{main-est-0508}
 \|Du\|_{L_p(\Omega_T)} +   \sqrt{\lambda} \|u\|_{L_p(\Omega_T, x_d^{-p/2})} 
   \leq  N \Big[\|F\|_{L_p(\Omega_T)}  +   \|f\|_{L_p(\Omega_T, x_d^{-p/2})} \Big]
\end{equation}
with $N = N(\nu, d, p)>0$.
\end{theorem}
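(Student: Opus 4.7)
The plan is to prove the a priori estimate \eqref{main-est-0508} and then deduce existence and uniqueness via the method of continuity anchored at the $L_2$-solvability developed in Section \ref{sec:L2}. The a priori bound will be obtained by a freezing-and-perturbation argument combined with a Fefferman-Stein-type sharp function theorem: on each small upper half-ball one freezes $a_{ij}$ and $c_0$ to the partial averages $[a_{ij}]_{r,z_0}(x_d)$ and $[c_0]_{r,z_0}(x_d)$ depending only on $x_d$, invokes the pointwise regularity theory of Section \ref{sec:xd} for the frozen homogeneous problem, and controls the perturbation using the smallness \eqref{a-BMO-cond}.

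Fix $z_0 \in \overline{\Omega}_T$ and $r \in (0, \rho_0/8)$, and write $\bar a_{ij}(x_d) = [a_{ij}]_{r,z_0}(x_d)$, $\bar c_0(x_d) = [c_0]_{r,z_0}(x_d)$. On $Q_{2r}^+(z_0)$ I would decompose $u = v + w$, where $v$ solves the frozen homogeneous equation
\begin{equation*}
v_t + \lambda \bar c_0(x_d) v - x_d D_i\bigl(\bar a_{ij}(x_d) D_j v\bigr) = 0 \quad \text{in } Q_{2r}^+(z_0)
\end{equation*}
with the same boundary data as $u$ on the parabolic boundary (and on $\{x_d=0\}$ when it meets $Q_{2r}^+(z_0)$), and $w = u - v$ solves the same frozen equation with right-hand side driven by $\tilde F_i = F_i + (\bar a_{ij} - a_{ij}) D_j u$ and $\tilde f = f + \sqrt{\lambda}\,(\bar c_0 - c_0)\, x_d^{1/2} u$. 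The $L_2$-energy estimate from Section \ref{sec:L2} applied to $w$, combined with H\"older's inequality and the partial BMO smallness \eqref{a-BMO-cond}, yields
\begin{equation*}
\biggl(\fint_{Q_{r}^+(z_0)} \bigl(|Dw|^{q_0} + \lambda^{q_0/2} x_d^{-q_0/2} |w|^{q_0}\bigr) dz\biggr)^{1/q_0} \le N \gamma^{\alpha} \Theta_u(2r) + N \Theta_{F,f}(2r),
\end{equation*}
where $\Theta_u(2r)$ and $\Theta_{F,f}(2r)$ denote the $L_{q_0}$-averages on $Q_{2r}^+(z_0)$ of $|Du| + \sqrt\lambda\, x_d^{-1/2}|u|$ and of $|F| + x_d^{-1/2}|f|$ respectively, for some fixed $q_0 \in (1,p)$ and $\alpha > 0$.

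For the regular part $v$, the pointwise boundary/interior estimate of Section \ref{sec:xd} (whose core is the homogeneous Lipschitz bound of Section \ref{subsec:hom}) gives
\begin{equation*}
\sup_{Q_{r}^+(z_0)} \bigl(|Dv| + \sqrt\lambda\, x_d^{-1/2} |v|\bigr) \le N\,\Theta_u(2r).
\end{equation*}
Combining with the $w$-bound delivers a decay of the mean oscillation of both $Du$ and $\sqrt\lambda\, x_d^{-1/2} u$ controlled by maximal functions of $\Phi = (|Du| + \sqrt\lambda\, x_d^{-1/2}|u|)^{q_0}$ and $\Psi = (|F| + x_d^{-1/2}|f|)^{q_0}$ at $z_0$, with a prefactor $\gamma^\alpha$ in front of $\cM(\Phi)^{1/q_0}$.

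Applying a Fefferman-Stein sharp function theorem on $\Omega_T$ together with the Hardy-Littlewood maximal inequality (valid because $q_0 < p$), and choosing $\gamma = \gamma(d,\nu,p)$ small enough to absorb the $\gamma^\alpha$-term into the left-hand side, produces \eqref{main-est-0508}. The threshold $\lambda \ge \lambda_0/\rho_0$ arises because the perturbation and sharp-function steps require $\sqrt\lambda\,\rho_0$ to exceed a constant depending on $(d,\nu,p)$; the rescaling $(t,x)\mapsto(\rho_0 t,\rho_0 x)$ reduces matters to $\rho_0=1$, $\lambda\ge\lambda_0$. Existence and uniqueness then follow from the a priori estimate by a standard approximation argument and the method of continuity, starting from the $L_2$-existence of Section \ref{sec:L2}. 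The main obstacle in the entire argument is the pointwise Lipschitz estimate for $v$ invoked in the previous paragraph: since the weights on $u_t$ and on the spatial divergence in \eqref{main-PDE-sing} are not balanced, classical Moser or De Giorgi iteration does not apply directly, and one must exploit the hidden energy identity obtained by testing with $(x_d+\epsilon)^{-1}v$ and bootstrap via an iteration based on anisotropic Sobolev embedding and Hardy's inequality, as foreshadowed in Section \ref{subsec:hom}.
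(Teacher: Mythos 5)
Your overall strategy (freeze to $x_d$-dependent coefficients, decompose locally, invoke the Lipschitz estimate of Section \ref{subsec:hom} for the frozen homogeneous part, combine with the $L_2$-theory and a sharp-function/maximal argument, then use the method of continuity) matches the paper's. Two structural differences: the paper puts the perturbation on the ``good'' part $g$, which solves the frozen equation globally on $\Omega_T$ via Theorem \ref{thm-simpl-eqn-2}, so that $h=u-g$ is the homogeneous piece and no boundary-matching on the parabolic boundary of $Q_{2r}^+(z_0)$ is needed; your decomposition ($v$ homogeneous with $u$'s lateral data, $w=u-v$ inhomogeneous) requires a solvability result on a bounded half-ball with prescribed lateral data, which is not supplied by Theorem \ref{thm-simpl-eqn-2} and would need a separate argument. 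Also, minor slips: your $\tilde f$ should read $f+\sqrt\lambda(\bar c_0-c_0)u$, without the factor $x_d^{1/2}$, and the exponent $q_0$ must lie in $(2,p)$ (so $p>2$), not $(1,p)$, since the smallness factor comes from H\"older with exponent $\frac{2q_0}{q_0-2}$ applied to $|a_{ij}-\bar a_{ij}|$.

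The genuine gap is the treatment of scales $r\gtrsim\rho_0$ and the source of the threshold $\lambda\ge\lambda_0\rho_0^{-1}$. The sharp-function/mean-oscillation argument must control the oscillation of $U=|Du|+\sqrt\lambda\,x_d^{-1/2}|u|$ over \emph{all} radii, but assumption \eqref{a-BMO-cond} supplies the factor $\gamma^{1-2/q}$ only for $r<\rho_0$. Rescaling $(t,x)\mapsto(\rho_0 t,\rho_0 x)$ merely renames $\rho_0$ to $1$ and $\lambda$ to $\rho_0\lambda$; after rescaling you still face balls of radius $r\ge1$ where the BMO condition gives no smallness, so the sharp-function inequality picks up an $O(1)$ multiple of $\cM(U^{q_0})^{1/q_0}$ that cannot be absorbed. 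The paper resolves this by first proving the estimate under the additional hypothesis that $\textup{spt}(u)\subset(s-\rho_0 r_0,s+\rho_0 r_0)\times\bR^d_+$, so that for $r\gtrsim\rho_0$ the small measure of $\textup{spt}(u)\cap Q_{10r}^+$ produces a factor $r_0^{1-2/q}$ in place of $\gamma^{1-2/q}$ (this is encoded in \eqref{B-u-tilde-est-inter}); one then chooses both $\gamma$ and $r_0$ small. The support restriction is subsequently removed by a partition of unity in time with cutoffs of width $\sim\rho_0 r_0$, which introduces the commutator term $\lambda^{-1/2}\xi'(t-s)u$ of size $\lambda^{-1/2}(\rho_0 r_0)^{-1}\|u\|_{L_p(x_d^{-p/2})}$; absorbing this on the left against $\sqrt\lambda\|u\|_{L_p(x_d^{-p/2})}$ is exactly what forces $\lambda\ge\lambda_0\rho_0^{-1}$. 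Neither the support localization nor the partition-of-unity step appears in your write-up, and without them the argument does not close.
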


\begin{remark} The following points are worth mentioning.
\begin{itemize}
\item[\textup{(i)}] By Hardy's inequality, we have $\|u\|_{L_p(\Omega_T, x_d^{-p})} \leq N(p, d) \|Du\|_{L_p(\Omega_T)}$. 
Therefore, it follows  that Theorem \ref{main-thrm} also provides the $L_p$-estimate of $u$ with weight $x_d^{-p}$.
\item[\textup{(ii)}] It is possible to extend Theorem \ref{main-thrm} and obtain the wellposedness of \eqref{main-eqn}-\eqref{main-bdr-cond} in mixed-norm weighted space with  Muckenhoupt weights as in \cite[Theorem 2.5]{Dong-Phan-1}. 
Similarly, local boundary regularity estimates as in \cite[Corollary 2.3]{Dong-Phan-1} can be derived as an application of Theorem \ref{main-thrm}. However, we choose not to include those results to avoid further technical complications.
\item[\textup{(iii)}] It is well-known that the condition \eqref{a-BMO-cond} is necessary. We also note that in \eqref{a-BMO-cond}, the partial mean oscillations of the coefficients are measured with respect to the usual Lebesgue measure, see Definition \ref{a-ossi-def}. On the other hand,  for the classes of equations studied in \cite{CMP, Dong-Phan, Dong-Phan-1, Dong-Phan-2, Dong-Phan-3}, the mean oscillations of the coefficients are defined with respect to  suitable weights.
\end{itemize}
\end{remark}

Next, we give a quick application of Theorem \ref{main-thrm}, which could be useful later in studying the degenerate viscous Hamilton-Jacobi equation \eqref{HJ-eqn}. 
For simplicity, let us consider the model equation \eqref{simplest-eqn} with $\lambda =1$. 
The general case with $\lambda>0$ can be derived from the result with $\lambda =1$ using a scaling argument.

\begin{corollary} \label{cor-2} 
{Let $p \geq 2$, $T \in (-\infty, \infty]$, and $f_t, f \in L_p(\Omega_T, x_d^{-p/2})$. 
Then there is a unique solution $u$  of \eqref{simplest-eqn}-\eqref{main-bdr-cond} with $\lambda =1$ satisfying}
\begin{equation} \label{est-model-eqn}
\begin{split}
& \|u\|_{L_p(\Omega_T, x_d^{-p/2})}  +  \|Du\|_{L_p(\Omega_T)} +  \|u_t \|_{L_p(\Omega_T, x_d^{-p/2})} \\
& + \|D^2u\|_{L_p(\Omega_T, x_d^{p/2})} + \|Du_t\|_{L_p(\Omega_T)}  \\
& \leq N \Big[\|f_t\|_{L_p(\Omega_T, x_d^{-p/2})} + \|f\|_{L_p(\Omega_T, x_d^{-p/2})} \Big],
\end{split}
\end{equation}
where $N = N(d, p) >0$.
\end{corollary}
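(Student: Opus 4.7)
The strategy is to invoke Theorem \ref{main-thrm} twice---once for $u$ and once for the time-derivative $v := u_t$---and then recover the weighted second-order bound by inverting the Laplacian on each time slice. Since \eqref{simplest-eqn} has constant coefficients $a_{ij} = \delta_{ij}$ and $c_0 = 1$, the partial mean oscillation $a_\rho^\#(z_0)$ vanishes identically and condition \eqref{a-BMO-cond} holds with any $\gamma$ and any $\rho_0$. Choosing $\rho_0$ large enough that $\lambda_0 \rho_0^{-1} \leq 1$, Theorem \ref{main-thrm} applied with $\lambda = 1$ and $F = 0$ yields a unique $u \in \sH_p^1(\Omega_T)$ satisfying \eqref{main-bdr-cond} together with
\[
\|Du\|_{L_p(\Omega_T)} + \|u\|_{L_p(\Omega_T, x_d^{-p/2})} \leq N \|f\|_{L_p(\Omega_T, x_d^{-p/2})}.
\]

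For the time-derivative terms I would apply Theorem \ref{main-thrm} to the forward difference quotient $u^h(t,x) := h^{-1}(u(t+h, x) - u(t, x))$, which solves the same equation with source $f^h$ defined analogously. The resulting estimate for $u^h$ in terms of $f^h$, together with $f^h \to f_t$ in $L_p(\Omega_T, x_d^{-p/2})$ as $h \to 0^+$---this being precisely what the hypothesis $f_t \in L_p(\Omega_T, x_d^{-p/2})$ provides---produces
\[
\|Du_t\|_{L_p(\Omega_T)} + \|u_t\|_{L_p(\Omega_T, x_d^{-p/2})} \leq N \|f_t\|_{L_p(\Omega_T, x_d^{-p/2})}.
\]
Rewriting \eqref{simplest-eqn} pointwise as $\Delta u = x_d^{-1}(u_t + u - f)$ then gives
\[
\|\Delta u\|_{L_p(\Omega_T, x_d^{p/2})} = \|u_t + u - f\|_{L_p(\Omega_T, x_d^{-p/2})},
\]
which is already controlled by the preceding bounds. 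Only upgrading $\Delta u$ to the full Hessian $D^2 u$ in the weighted norm remains.

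For this Calder\'on--Zygmund step I would work slice by slice in $t$. Reflecting $u(t,\cdot)$ oddly across $\{x_d = 0\}$, which is legitimate since $u$ vanishes there, produces $\tilde u \in C^1(\bR^d)$ whose distributional Laplacian on $\bR^d$ is the odd extension $\tilde g$ of $\Delta u$. For $p > 2$ the extended weight $|x_d|^{p/2}$ lies in $A_p(\bR^d)$ precisely because $p/2 \in (-1, p-1)$, and weighted Calder\'on--Zygmund theory for $\Delta$ on $\bR^d$ yields $\|D^2 \tilde u\|_{L_p(\bR^d, |x_d|^{p/2})} \leq N \|\tilde g\|_{L_p(\bR^d, |x_d|^{p/2})}$; restricting and integrating in $t$ closes the estimate. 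I expect the borderline case $p = 2$ to be the main obstacle, since $|x_d|$ just fails $A_2$ and the Muckenhoupt-based argument collapses. Here I would instead argue via a direct integration by parts on smooth approximants vanishing near $\{x_d = 0\}$: two IBP's in space, together with the identity $D_d(|Du|^2) = 2\sum_j D_{dj} u\, D_j u$, produce the sharp equality
\[
\int_{\bR^d_+} |D^2 u|^2\, x_d\, dx = \int_{\bR^d_+} (\Delta u)^2\, x_d\, dx,
\]
all boundary terms at $\{x_d = 0\}$ collapsing thanks to the vanishing of the approximants there. Density in the natural weighted energy space followed by time integration completes the estimate, and uniqueness was already established in the first step.
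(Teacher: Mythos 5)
Your proposal follows the same overall strategy as the paper: apply Theorem \ref{main-thrm} (with constant coefficients, so $a^\#_\rho \equiv 0$ and a scaling argument lets $\lambda = 1$) to $u$ and then to $u_t$ via difference quotients, rewrite the equation as a Poisson equation $-\Delta u = x_d^{-1}(f - u_t - u)$ on each time slice, and invert the Laplacian using the weighted Calder\'on--Zygmund estimate with the $A_p$ weight $x_d^{p/2}$ for $p > 2$. The one place where you deviate is the borderline case $p = 2$. The paper tests the slice equation $-x_d\Delta u = h$ against $D_{x'}^2 u$ to first control $\|DD_{x'}u\|_{L_2(x_d)}$ and then reads off $D_d^2 u$ from the PDE; your version instead runs two integrations by parts to get the clean identity
\[
\int_{\bR^d_+}|D^2 u|^2\,x_d\,dx=\int_{\bR^d_+}(\Delta u)^2\,x_d\,dx.
\]
This identity is indeed correct, although your explanation of it is slightly off: the two boundary terms that arise at $\{x_d=0\}$, both proportional to $\int_{\bR^{d-1}}(D_d u(x',0))^2\,dx'$, do not vanish individually but cancel each other (your ``approximants vanishing near the boundary'' phrasing hides this cancellation and the subsequent density step, which requires the same care the paper also glosses over). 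Both arguments are valid at the level of formal energy identities; yours is a bit tighter, the paper's a bit more explicit about which second derivatives appear first. The odd reflection you propose for $p>2$ is one standard way to realize the weighted CZ estimate on the half space, and the half-weight $|x_d|^{p/2}$ being $A_p$ for $p>2$ is the right fact, so that step is fine.
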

We emphasize that Corollary \ref{cor-2} is just for demonstrative purposes, and it is not sharp.  
More comprehensive estimates and wellposedness results for a class of non-divergence form equations of the form \eqref{simplest-eqn}-\eqref{main-bdr-cond} will be established in our forthcoming paper.
{Linear and nonlinear classes of equations with similar degenerate coefficients but more general weights compared to $x_d$ will be investigated in near future}. We note that the weights in our main results (Theorem \ref{main-thrm} and Corollary \ref{cor-2}) are different from those in \cite{Koch, Dong-Phan}.

\section{Theory of $L_2$-weak solutions} \label{sec:L2}
In this section, we study following class of parabolic equations which are slightly more general than \eqref{main-eqn}
\begin{equation} \label{L2-main-eqn}
x_d ^{-1}(a_0(x) u_t(z) + \lambda c_0(z) u(z)) - D_i\big(a_{ij}(z) D_{j} u(z) - F_i\big) = \sqrt{\lambda} x_d ^{-1} f \quad \text{in} \quad \Omega_T
\end{equation}
with the boundary condition
\begin{equation} \label{L2-main-bdr-cond}
u = 0 \quad \text{on} \quad (-\infty, T) \times \partial \bR^d_+.
\end{equation}
Here, $\lambda \geq 0$ and $F_i : \Omega_T \rightarrow \mathbb{R}$, $f: \Omega_T \rightarrow \mathbb{R}$ are given measurable functions, and $a_0 :  \bR^d_+ \rightarrow \bR$ such that
\begin{equation} \label{a.conds}
\nu \leq a_0(x) \leq \nu^{-1}, \quad  x \in \bR^d_+.
\end{equation}
We begin with the following lemma on the energy estimate for \eqref{x-d.model-eqn}. 

\begin{lemma} \label{lemma-ener-1}  
Suppose that \eqref{elli}, \eqref{a-c.conds}, and \eqref{a.conds} hold, and suppose that $f \in L_2(\Omega_T, x_d^{-1}), F \in L_2(\Omega_T)^d$. Then, for every weak solution $u \in  \sH^{1}_2(\Omega_T)$ of  \eqref{L2-main-eqn}-\eqref{L2-main-bdr-cond} with $\lambda \geq 0$, it holds that
\begin{equation} \label{1016-2.est}
\begin{split}
& \|Du\|_{L_2(\Omega_T)} + \sqrt{\lambda} \|u\|_{L_2(\Omega_T, x_d^{-1})}   \leq N \Big[ \|F\|_{L_2(\Omega_T)} + \|f\|_{L_2(\Omega_T, x_d^{-1})} \Big],
\end{split}
\end{equation}
where $N = N(\nu, d)$.
\end{lemma}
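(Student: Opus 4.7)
The plan is to use $u$ itself as a test function in the weak formulation of \eqref{L2-main-eqn}--\eqref{L2-main-bdr-cond}, and then exploit ellipticity together with Young's inequality. Since $u \in \sH^1_2(\Omega_T)$ rather than $C_0^\infty$, the substitution must be carried out via a standard approximation: I would convolve $u$ in time with a mollifier $\rho_\varepsilon$ and multiply by a temporal cutoff $\eta_n(t)$ that vanishes for $t\le -n$ and equals $1$ for $t\ge -n+1$, then let $\varepsilon\to 0$ and $n\to\infty$.

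After this substitution, four contributions appear. First, the time-derivative term $\int_{\Omega_T} x_d^{-1} a_0(x) u_t u\,dz$: because $a_0$ depends only on $x$, this integrates in $t$ to $\tfrac12 \int_{\bR^d_+} x_d^{-1} a_0(x) u(T,x)^2\,dx \ge 0$, with the contribution from $t=-\infty$ vanishing thanks to $x_d^{-1/2}u \in L_2(\Omega_T)$ and the continuity-in-time built into $\sH^1_2$. Second, the zeroth-order term satisfies $\lambda \int_{\Omega_T} x_d^{-1} c_0 u^2\,dz \ge \lambda\nu\,\|u\|_{L_2(\Omega_T,\,x_d^{-1})}^2$ by \eqref{a-c.conds}. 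Third, the divergence term yields
\[
\int_{\Omega_T}(a_{ij} D_j u - F_i) D_i u\,dz \;\ge\; \nu\,\|Du\|_{L_2(\Omega_T)}^2 \;-\; \|F\|_{L_2(\Omega_T)}\,\|Du\|_{L_2(\Omega_T)},
\]
where integration by parts is legitimate because $u=0$ on $\{x_d=0\}$ (Theorem~\ref{zero-trace-1}). Fourth, the right-hand side is bounded by $\sqrt{\lambda}\,\|f\|_{L_2(\Omega_T,\,x_d^{-1})}\|u\|_{L_2(\Omega_T,\,x_d^{-1})}$ via Cauchy--Schwarz after writing $x_d^{-1} = x_d^{-1/2} \cdot x_d^{-1/2}$.

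Combining these bounds and applying Young's inequality to absorb $\tfrac{\nu}{2}\|Du\|_{L_2(\Omega_T)}^2$ and (when $\lambda>0$) $\tfrac{\lambda\nu}{2}\|u\|_{L_2(\Omega_T,\,x_d^{-1})}^2$ into the left-hand side delivers \eqref{1016-2.est} with $N=N(\nu,d)$. The main obstacle is the rigorous justification of the time-derivative manipulation: one must interpret the pairing $\langle u_t, u\rangle$ through the $\bH_2^{-1}$ duality and verify that the mollification-plus-cutoff scheme passes to the limit to yield the desired identity $\int x_d^{-1}a_0 u_t u\,dz = \tfrac12 \int_{\bR^d_+} x_d^{-1}a_0 u(T,\cdot)^2\,dx$. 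The remaining estimates are elementary pointwise and Cauchy--Schwarz bounds.
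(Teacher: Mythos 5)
Your proposal is correct and follows essentially the same line as the paper's own proof: test the equation with $u$, use the sign of the time-derivative term after integrating in $t$, apply the ellipticity bounds \eqref{elli}, \eqref{a-c.conds}, \eqref{a.conds}, and absorb the right-hand side with Young's inequality. The only difference is that you spell out the mollification-in-time plus cutoff argument that makes testing with $u$ rigorous, whereas the paper treats this as routine and simply writes ``by multiplying \eqref{L2-main-eqn} by $u$ and using the integration by parts.''
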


\begin{proof}  
First, by multiplying \eqref{L2-main-eqn} by $u$ and using the integration by parts with the boundary condition that $u =0$ on $\{x_d =0\}$ and the ellipticity and boundedness conditions in \eqref{elli}, we obtain
\[
\begin{split}
& \frac{1}{2} \frac{d}{dt} \int_{\bR^{d}_+} a_0(x)  |u|^2 x_d^{-1} \,dx + \lambda  \int_{\bR^{d}_+} c_0(z)  |u|^2 x_d^{-1} \,dx +  \nu \int_{\bR^{d}_+} |Du|^2 \,dx \\
& \leq  \sqrt{\lambda} \int_{\bR^{d}_+}  |u| |f| x_d^{-1} \,dx + N(\nu) \int_{\bR^{d}_+} |F| |Du|\, dx
\end{split}
\]
Integrating this estimate in the time variable, and using Young's inequality for the term on the right hand side, then cancelling similar terms, we obtain
\[
\lambda  \int_{\Omega_T} |u|^2 x_d^{-1}  \,dz + \int_{\Omega_T} |Du|^2  \,dz \leq N  \int_{\Omega_T} \big( |f(t,x)|^2 x_d^{-1} + |F(z)|^2\big)  \,dz,
\]
where we have also used conditions \eqref{a-c.conds} and \eqref{a.conds}. 
Therefore,
\begin{equation} \label{1016-3.est}
\|Du\|_{L_2(\Omega_T)} + \sqrt{\lambda} \|u\|_{L_2(\Omega_T, x_d^{-1})}  \leq N\Big[ \|F\|_{L_2(\Omega_T)} +  \|f\|_{L_2(\Omega_T, x_d^{-1})}\Big]
\end{equation}
and the lemma is proved.
\end{proof}

We now conclude this section with the following important theorem which proves Theorem \ref{main-thrm} for the case $p=2$.

\begin{theorem} \label{thm-simpl-eqn-2}  
Suppose that \eqref{elli}, \eqref{a-c.conds}, and \eqref{a.conds} hold. 
Then, for every $\lambda >0$,  $f \in L_2(\Omega_T, x_d^{-1}), F \in L_2(\Omega_T)^d$, there exists a unique weak solution $u \in \sH^{1}_2 (\Omega_T)$ of  \eqref{L2-main-eqn}-\eqref{L2-main-bdr-cond}. 
Moreover, 
\begin{equation*} 
\begin{split}
& \|Du\|_{L_2(\Omega_T)} + \sqrt{\lambda} \|u\|_{L_2(\Omega_T, x_d^{-1})}   \leq N \Big[ \|F\|_{L_2(\Omega_T)} + \|f\|_{L_2(\Omega_T, x_d^{-1})} \Big],
\end{split}
\end{equation*}
where $N = N(\nu, d)$.
\end{theorem}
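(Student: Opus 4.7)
Uniqueness and the norm bound in Theorem \ref{thm-simpl-eqn-2} are immediate from Lemma \ref{lemma-ener-1}: applied to the difference of two weak solutions (with $F = 0$, $f = 0$) it forces them to coincide, and applied to any weak solution it yields the stated estimate. Hence the real content of the theorem is the existence of a weak solution in $\sH^1_2(\Omega_T)$.

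For existence, the plan is a two-step approximation that removes both the singularity of the weight and the unboundedness of the time interval. First, replace the factor $x_d^{-1}$ by $(x_d + \epsilon)^{-1}$ for $\epsilon > 0$, obtaining the regularized equation
\begin{equation*}
(x_d + \epsilon)^{-1}\bigl(a_0 u_t + \lambda c_0 u\bigr) - D_i\bigl(a_{ij} D_j u - F_i\bigr) = \sqrt{\lambda}\,(x_d + \epsilon)^{-1} f,
\end{equation*}
in which the coefficient in front of $u_t$ is bounded above and below. Second, truncate the time interval to $(-R, T)$ with zero initial data at $t = -R$ (and, optionally, the spatial domain to a large half-ball $B_R^+$ with zero Dirichlet data on $\partial B_R$). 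The resulting Cauchy problem is within the scope of the standard $L_2$-theory and can be solved by Galerkin approximation against an orthonormal basis of $H^1_0(\bR^d_+)$, or equivalently via the Lions-Lax-Milgram framework applied to the Gelfand triple $H^1_0(\bR^d_+) \hookrightarrow L_2(\bR^d_+) \hookrightarrow H^{-1}(\bR^d_+)$. This produces approximate solutions $u_{R,\epsilon}$.

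The energy argument of Lemma \ref{lemma-ener-1} transfers verbatim to the regularized equation (with $x_d^{-1}$ replaced by $(x_d+\epsilon)^{-1}$ everywhere), yielding the uniform-in-$(R,\epsilon)$ estimate
\begin{equation*}
\|Du_{R,\epsilon}\|_{L_2(\Omega_T)} + \sqrt{\lambda}\,\|u_{R,\epsilon}\|_{L_2(\Omega_T,(x_d+\epsilon)^{-1})} \leq N\bigl[\|F\|_{L_2(\Omega_T)} + \|f\|_{L_2(\Omega_T, x_d^{-1})}\bigr],
\end{equation*}
because $(x_d+\epsilon)^{-1} \leq x_d^{-1}$. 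Weak compactness then produces a subsequential limit $u$, and Fatou's lemma combined with the monotone convergence $(x_d+\epsilon)^{-1} \nearrow x_d^{-1}$ as $\epsilon \to 0^+$ places $u$ in $L_2(\Omega_T, x_d^{-1})$ with the required bound. Passing to the limit in the weak formulation of Definition \ref{weak-form} is straightforward, because any test function $\varphi \in C_0^\infty(\Omega_T)$ is supported in $\{x_d \geq \delta\}$ for some $\delta > 0$, on which $(x_d+\epsilon)^{-1} \to x_d^{-1}$ uniformly.

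The main technical obstacle is not the energy estimate itself but the verification that the limit actually lies in $\sH^1_2(\Omega_T)$, i.e., that $u_t \in \bH^{-1}_2(\Omega_T)$ in the sense used in the paper. This follows by reading the equation as the distributional identity
\begin{equation*}
a_0 u_t = -\lambda c_0 u + \sqrt{\lambda}\,f + x_d D_i(a_{ij} D_j u - F_i),
\end{equation*}
whose right-hand side decomposes as $x_d D_i G_i + g$ with $G \in L_2(\Omega_T)$ and $g \in L_2(\Omega_T, x_d^{-1})$ controlled by $\|Du\|_{L_2} + \|u\|_{L_2(x_d^{-1})} + \|F\|_{L_2} + \|f\|_{L_2(x_d^{-1})}$; the boundedness of $a_0^{-1}$ (together with its independence of $t$) then allows one to absorb $a_0$ and conclude $u_t \in \bH^{-1}_2(\Omega_T)$, completing the proof.
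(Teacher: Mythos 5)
Your uniqueness and a priori estimate arguments match the paper's; both reduce to Lemma \ref{lemma-ener-1}. For existence you take a genuinely different route: the paper truncates once to $\hat Q_n=(-n,\min\{n,T\})\times B_n^+$ and applies the Galerkin method directly in the singular-weighted energy space on the truncated domain, then extends each $u_n$ by zero and passes to a single weak limit. You instead regularize the singular factor $x_d^{-1}\mapsto(x_d+\epsilon)^{-1}$ and truncate in time, aiming to reduce to unweighted standard theory and then remove $\epsilon$. This avoids setting up Galerkin in a weighted space, at the cost of a double limit and a few additional verifications (Fatou/monotone convergence, uniform convergence of $(x_d+\epsilon)^{-1}$ on supports of test functions), all of which you handle correctly in principle.

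There is, however, a concrete slip in the regularization step. You claim that after replacing $x_d^{-1}$ by $(x_d+\epsilon)^{-1}$ "the coefficient in front of $u_t$ is bounded above and below," but $(x_d+\epsilon)^{-1}\to 0$ as $x_d\to\infty$, so it is \emph{not} bounded below on $\bR^d_+$. Consequently the Gelfand triple $H^1_0(\bR^d_+)\hookrightarrow L_2(\bR^d_+)\hookrightarrow H^{-1}(\bR^d_+)$ you propose does not fit the equation: the pivot must be a space in which the time-derivative pairing $\int(x_d+\epsilon)^{-1}a_0\,u_t\,v\,dx$ is the inner product, namely $L_2(\bR^d_+,(x_d+\epsilon)^{-1}a_0)$, and with $V=H^1_0(\bR^d_+)$ G\aa rding's inequality then fails because $\|u\|_{L_2(\bR^d_+)}^2$ is not controlled by $\|Du\|_{L_2}^2+\|u\|_{L_2((x_d+\epsilon)^{-1})}^2$ (consider $u$ concentrated at large $x_d$). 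The step you present as optional --- truncating the spatial domain to $B_R^+$ --- is therefore necessary: on $B_R^+$ one has $(R+\epsilon)^{-1}\le(x_d+\epsilon)^{-1}\le\epsilon^{-1}$, the coefficients become uniformly elliptic and bounded, and the classical $L_2$ parabolic theory on $H^1_0(B_R^+)\hookrightarrow L_2(B_R^+)\hookrightarrow H^{-1}(B_R^+)$ applies. With that correction the rest of your limiting argument goes through. (This also makes the paper's choice to truncate both variables simultaneously seem the cleaner route.)

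A smaller point: your verification that $u_t\in\bH^{-1}_2(\Omega_T)$ "by absorbing $a_0^{-1}$" is not quite automatic. Writing $u_t=a_0^{-1}(-\lambda c_0 u+\sqrt\lambda f)+a_0^{-1}x_d D_i(a_{ij}D_j u-F_i)$, one cannot move $a_0^{-1}(x)$ inside $D_i$ because $a_0$ is merely bounded measurable. For the case $a_0\equiv 1$ (the main equation \eqref{main-eqn}) the decomposition $u_t=-\lambda c_0 u+\sqrt\lambda f+x_d D_i(a_{ij}D_j u-F_i)$ is immediate; for general $a_0$ one should either carry the ($t$-independent, comparable-to-one) weight $a_0$ into the duality pairing and adapt $\bH^{-1}_2$ accordingly, or argue directly from the weak formulation. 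The paper is similarly terse here, so this is a minor gloss rather than a flaw unique to your argument.
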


\begin{proof}  
For each $n \in \bN$, let 
\begin{equation*} \label{hat-Q}
\hat{Q}_n= (-n, \min\{n, T\}) \times B_n^+.
\end{equation*}
 We consider the equation of $u$ in $\hat{Q}_n$
\begin{equation} \label{w-eqn.Qn}
 x_d^{-1} \big(a_0 u_t + \lambda c_0 u\big) -  D_i\big( \overline{a}_{ij}(z) D_{j} u -F_i\big) = x_d^{-1} f(z) \quad \text{in} \quad \hat{Q}_n
\end{equation}
with boundary condition $u=0$ on $ (-n, \min\{n, T\}) \times \partial B_n^+$ and zero initial data at $\{-n\} \times B_n^+$. 
Then, performing the energy estimates as in the proofs  of Lemma \ref{lemma-ener-1}, for each $n \in \bN$, if $u_n \in \sH^{1}_2(\hat{Q}_n)$ is a weak solution of \eqref{w-eqn.Qn}, we have the following a priori estimate
\[
\begin{split}
& \|u_n\|_{L_\infty((-n, \min\{n, T\}), L_2(B_n^+, x_d^{-1}))} + \sqrt{\lambda} \| u_n\|_{L_2(\hat{Q}_n, x_d^{-1})} + \|Du_n\|_{L_2(\hat{Q}_n)} \\
&\leq N \Big[  \|F\|_{L_2(\hat{Q}_n)} + \|f\|_{L_2(\hat{Q}_n, x_d^{-1})} \Big], 
 \end{split}
\]
for $N = N(d, \nu)>0$. 
By using this estimate and  the Galerkin method, we see that for each $n \in \bN$, there exists a unique weak solution $u_n \in \sH^{1}_2(\hat{Q}_n)$ of \eqref{w-eqn.Qn}. 
By taking $u_n =0$ in $\Omega_T \setminus \hat{Q}_n$, we can consider $u_n$ as a function defined in $\Omega_T$ satisfying
\[
\begin{split}
& \|u_n\|_{L_\infty((-\infty, T), L_2(\bR^d_+, x_d^{-1}))} + \sqrt{\lambda} \| u_n\|_{L_2(\Omega_T, x_d^{-1})} + \|Du_n\|_{L_2(\Omega_T)} \\
&\leq N \Big[ \|F\|_{L_2(\Omega_T)} + \|f\|_{L_2(\Omega_T, x_d^{-1})} \Big].
 \end{split}
\]
From this, and by passing through a subsequence, we can find $u \in \sH^1_2(\Omega_T)$ such that 
\[
\begin{split}
& u_n \rightharpoonup u  \quad \text{ in } \quad L_2(\Omega_T, x_d^{-1})  \quad \text{as} \quad n \rightarrow \infty,\\
& Du_n \rightharpoonup Du  \quad \text{ in } \quad L_2(\Omega_T)   \quad \text{as} \quad n \rightarrow \infty.
\end{split}
\]
Then, using the weak form in Definition \ref{weak-form} and passing through the limit, we see that $u \in \sH^1_2(\Omega_T)$ is a weak solution of \eqref{L2-main-eqn}-\eqref{L2-main-bdr-cond}. 
Moreover, it also holds that
\[
\begin{split}
& \|Du\|_{L_2(\Omega_T)} + \sqrt{\lambda} \|u\|_{L_2(\Omega_T, x_d^{-1})}   \leq N \Big[ \|F\|_{L_2(\Omega_T)} + \|f\|_{L_2(\Omega_T, x_d^{-1})} \Big].
\end{split}
\]
The uniqueness of $u \in \sH^{1}_2(\Omega_T)$ also follows from this estimate. 
The proof of the theorem is completed.
\end{proof}

\section{Equations with coefficients depending only on $x_d$}\label{sec:xd}

{As we apply the perturbation technique to study \eqref{main-eqn}, we need to investigate the classes of equations with coefficients depending only on $x_d$. 
Throughout the section, assume that $\overline{a}_0, \overline{c}_0 : \bR_+ \rightarrow \bR$ are measurable and they satisfy
\begin{equation} \label{a-c.cond}
\nu \leq \overline{a}_0(x_d),  \overline{c}_0(x_d) \leq \nu^{-1} \quad \text{ for } x_d \in \bR_+,
\end{equation}
for a given constant $\nu \in (0,1)$. 
Moreover,  $(\overline{a}_{ij})_{i,j=1}^d: \bR_+ \rightarrow \mathbb{R}^{d \times d}$ is a  matrix of measurable functions satisfying the following ellipticity  and boundedness conditions
\begin{equation} \label{elli-cond}
\nu|\xi|^2 \leq \overline{a}_{ij}(x_d) \xi_i \xi_j, \quad |\overline{a}_{ij}(x_d)| \leq \nu^{-1}  \quad \text{ for } x_d \in \bR_+, \end{equation}
for $\xi =(\xi_1, \xi_2, \ldots, \xi_d) \in \bR^d$.   In addition to \eqref{elli-cond}, we assume that the matrix $(\overline{a}_{ij})$ satisfies the following condition on $\overline{a}_{ij}$
\begin{equation} \label{structure.cond}
\overline{a}_{id}  = \text{constant} \quad \text{ for } i =1, 2,\ldots, d.
\end{equation}

For a fixed constant $\lambda \geq 0$,  let $\mathcal{L}_0$ be an operator in divergence form with singular coefficients
\begin{equation} \label{L-0}
\mathcal{L}_0[u] = x_d^{-1}(\overline{a}_0(x_d)u_t + \lambda \overline{c}_0(x_d) u )- D_i\big( \overline{a}_{ij}(x_d) D_{j} u\big),
\end{equation}
for $(t,x) = (t, x', x_d) \in \Omega_T$ and we study following equation  
\begin{equation} \label{x-d.model-eqn}
\left\{
\begin{array}{cccl}
\mathcal{L}_0[u]  & = &  D_i F_i + \sqrt{\lambda}  x_d^{-1} f(t,x)  & \quad \text{in} \quad \Omega_T,  \\
u  & = & 0 & \quad \text{on} \quad \{x_d =0\} .
\end{array} \right.
\end{equation}
The following theorem is the main result of the section.

\begin{theorem}\label{thm-simpl-eqn} 
Let $\nu \in (0,1), p \in (1, \infty)$ and suppose that \eqref{a-c.cond}, \eqref{elli-cond}, and \eqref{structure.cond} hold. 
Then, for every $f \in L_p(\Omega_T, x_d^{-p/2}), F = (F_1, F_2, \ldots, F_d) \in L_p(\Omega_T)^d$, and $\lambda >0$, there exists a unique  weak solution $u \in \sH^{1}_p (\Omega_T)$ of  \eqref{x-d.model-eqn}. 
Moreover, 
\begin{equation} \label{apr-est-0606}
\|Du\|_{L_p(\Omega_T)} + \sqrt{\lambda} \| u\|_{L_p(\Omega_T, x_d^{-p/2})}   \leq N\Big[ \|F\|_{L_p(\Omega_T)} + \|f\|_{L_p(\Omega_T, x_d^{-p/2})}\Big],
\end{equation}
  where $N = N(\nu, d, p) >0$.
\end{theorem}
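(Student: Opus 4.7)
The plan is to combine the $L_2$ theory from Theorem \ref{thm-simpl-eqn-2} with a Fefferman--Stein sharp function argument driven by interior/boundary regularity of homogeneous solutions, and then to obtain existence by passing to the limit in an approximating sequence of $L_2$-solutions. By density it suffices to establish the a priori bound \eqref{apr-est-0606} for solutions $u \in \sH^1_p(\Omega_T) \cap \sH^1_2(\Omega_T)$, since uniqueness in $\sH^1_p$ will follow from the bound itself, and existence in the full class of data will then be recovered from the $L_2$-existence already available.

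For the core of the argument, I would fix $z_0 \in \overline{\Omega}_T$ and $r>0$, and let $w \in \sH^1_2(Q_{2r}^+(z_0))$ solve $\mathcal{L}_0 w = 0$ in $Q_{2r}^+(z_0)$ with vanishing trace on $\{x_d=0\}$ whenever the latter intersects the half-ball. The structure condition \eqref{structure.cond} that $\bar a_{id}$ are constants is crucial here: it guarantees that $w_t$ and $D_{x'} w$ satisfy equations of exactly the same form as $w$, with identical weight $x_d^{-1}$ and coefficients still depending only on $x_d$. Iterating this tangential differentiation and plugging the result into the Lipschitz-type pointwise estimates for homogeneous solutions developed in Section \ref{subsec:hom}, I would derive an oscillation decay of the form
\[
\fint_{Q_\rho^+(z_0)}\!\Bigl(|Dw - (Dw)_{Q_\rho^+(z_0)}|^2 + \lambda x_d^{-1}|w - c_\rho|^2\Bigr) dz \leq N\Bigl(\tfrac{\rho}{r}\Bigr)^{2\alpha}\!\fint_{Q_{2r}^+(z_0)}\!\Bigl(|Dw|^2 + \lambda x_d^{-1}|w|^2\Bigr) dz,
\]
valid for all $\rho \leq r$ and some constant $c_\rho$, with $\alpha = \alpha(d,\nu) > 0$.

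For a general solution $u$ to \eqref{x-d.model-eqn}, I would split on $Q_{2r}^+(z_0)$ as $u = w + v$, where $w$ solves $\mathcal{L}_0 w = 0$ with $w = u$ on the parabolic boundary and $v = u - w$ solves the inhomogeneous equation with zero boundary data. Theorem \ref{thm-simpl-eqn-2} gives an $L_2$-control of $v$ in terms of $F$ and $f$, while the previous step controls the oscillation of $Dw$ and $\sqrt{\lambda} x_d^{-1/2} w$. Combining the two yields a pointwise sharp-function inequality which, via the Fefferman--Stein theorem and the Hardy--Littlewood maximal function theorem on the balls $Q_r^+(z_0)$, upgrades to \eqref{apr-est-0606} in the range $p \in [2,\infty)$. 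The range $p \in (1,2)$ follows by duality: the formal adjoint of $\mathcal{L}_0$ (with $\bar a_{ij}$ replaced by $\bar a_{ji}$ and time reversed) retains the structural hypotheses \eqref{a-c.cond}, \eqref{elli-cond}, \eqref{structure.cond}, so its estimate is already covered. Existence is then obtained by approximating $F$ and $f$ by sequences in $L_2 \cap L_p$ and in $L_2(\Omega_T, x_d^{-1}) \cap L_p(\Omega_T, x_d^{-p/2})$, constructing $L_2$-solutions via Theorem \ref{thm-simpl-eqn-2}, and using \eqref{apr-est-0606} to show these form a Cauchy sequence in $\sH^1_p(\Omega_T)$.

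The main obstacle I anticipate is the homogeneous regularity estimate. Because the weight $x_d^{-1}$ is not even locally integrable near $\{x_d=0\}$, standard De Giorgi or Moser iteration does not apply to $\mathcal{L}_0$ directly, and the boundary Lipschitz bound must come from the specialized machinery of Section \ref{subsec:hom}, where \eqref{main-PDE-sing} is tested against $(x_d+\epsilon)^{-1}u$ to uncover a hidden energy identity that is then iterated via anisotropic Sobolev embeddings and Hardy's inequality. Tracking the correct dependence on $\lambda$ and the scale $r$, so that the non-parabolic scaling of the half-balls $Q_r^+$ (forced by the linear growth of $x_d$ in the diffusion) does not spoil the Fefferman--Stein upgrade, is the most delicate point.
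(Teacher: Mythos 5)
Your overall architecture — reduce to an a priori bound, use a local decomposition into a homogeneous part and a data-controlled part, exploit the pointwise estimates of Section \ref{subsec:hom} for the homogeneous part, upgrade to $L_p$ by a real-variable argument, pass to $p<2$ by duality, and recover existence by approximating the data through the $L_2$ theory — is the same skeleton as the paper's proof. Two points, however, diverge in a way worth flagging.

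First, you invoke a Fefferman--Stein sharp-function argument driven by the oscillation-decay estimate
\[
\fint_{Q_\rho^+(z_0)}\bigl(|Dw-(Dw)_{Q_\rho^+(z_0)}|^2+\lambda x_d^{-1}|w-c_\rho|^2\bigr)\,dz\le N\bigl(\tfrac{\rho}{r}\bigr)^{2\alpha}\fint_{Q_{2r}^+(z_0)}\bigl(|Dw|^2+\lambda x_d^{-1}|w|^2\bigr)\,dz,
\]
but this is not quite what Section \ref{subsec:hom} delivers, and in the boundary case it is not even well-posed as written: whenever $Q_\rho^+(z_0)$ meets $\{x_d=0\}$, the term $x_d^{-1}|w-c_\rho|^2$ is not locally integrable unless $c_\rho=0$. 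More substantively, the machinery of Section \ref{subsec:hom} (Corollary \ref{step-1}, Lemmas \ref{interior-Linf} and \ref{u-x-d.bound}) produces $L_\infty$ bounds on $|Du|$ and $\sqrt\lambda\,x_d^{-1/2}u$ for homogeneous solutions, not a modulus of continuity for $D_d u$, and the $D_d$-direction Lipschitz bound already required integrating the equation in $x_d$. The paper's route (Corollary \ref{Simple-approx} plus the Krylov--Dong--Kim level-set real-variable argument, cited as \cite{DK11b}) needs only the $L_\infty$ bound and therefore avoids proving a H\"older-in-$\rho$ decay for the full gradient near the degenerate boundary. Your sharp-function route is viable in principle, but it demands strictly more regularity of the homogeneous solution than the paper establishes, so you would need to supply that extra Campanato-type estimate for $D_d w$, including a carefully chosen affine or constant corrector that is forced to vanish for boundary balls.

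Second, the existence step for $p\in(1,2)$ is glossed over. You write that one uses \eqref{apr-est-0606} to show the approximating $L_2$-solutions $u^{(k)}$ form a Cauchy sequence in $\sH^1_p(\Omega_T)$, but \eqref{apr-est-0606} is an a priori estimate valid for solutions already known to lie in $\sH^1_p$; for $p<2$ it gives no information about whether $u^{(k)}\in\sH^1_p(\Omega_T)$ in the first place. Since $\hat Q_{k}$ is unbounded and $p<2$, H\"older's inequality does not give $\sH^1_2\subset\sH^1_p$ globally. The paper addresses this with a genuine additional argument (Claim A and its iteration in Appendix \ref{claim-A-proof}), showing, via a localization with annular cutoffs and a geometric decay of the $L_2$-norm on dyadic shells, that $u^{(k)}$ has enough decay at infinity to lie in $\sH^1_p$. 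Your proposal needs an analogous step.

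A small final caveat: your observation that the formal adjoint, with $\bar a_{ij}$ replaced by $\bar a_{ji}$, ``retains \eqref{structure.cond}'' deserves a word of justification, since \eqref{structure.cond} fixes $\bar a_{id}$ (second index $=d$) as constants while the adjoint's analogous condition concerns $\bar a_{di}$, and no symmetry of $(\bar a_{ij})$ is assumed; the paper implicitly makes the same step, so this is not a defect relative to the paper, but it should be checked rather than asserted.
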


\subsection{Pointwise estimates for homogeneous equations} \label{subsec:hom}
Let $r >0$ and  $z_0 = (t_0, x_0) \in \bR \times \overline{\bR}^d_+$. 
We consider the equation
\begin{equation} \label{eq11.52}
\left\{
\begin{array}{cccl}
\mathcal{L}_0 [u]  & = & 0 &  \quad \text{in} \quad Q_r^+(z_0),\\
 u & =& 0 & \quad \text{on} \quad Q_r(z_0) \cap \{x_d =0\} \quad \text{if} \quad Q_r(z_0) \cap \{x_d =0\} \not=\emptyset.
\end{array} \right.
\end{equation}
\begin{lemma}[Caccioppoli type estimates]  \label{Caccio-lemma} 
Let $z_0 = (z_0', 0)$ and $u\in \sH^{1}_{2}(Q_1^+(z_0))$ be a weak solution to \eqref{eq11.52} in $Q_1^+(z_0)$. 
Then, for every $0 < r <R <1$, we have
\begin{equation} \label{lemma.12-4}
\begin{split}
\int_{Q_r^+(z_0)} \Big(|Du|^2  + \lambda x_d^{-1} |u|^2 \Big)\, dz  & \leq N  \int_{Q_R^+(z_0)}  x_d^{-1}|u|^2 \, dz, 
\end{split}
 \end{equation}
where $N = N(d, \nu,r , R)>0$. 
Moreover,
\begin{equation} \label{lemma.12-4-t}
\int_{Q_r^+(z_0)}  x_d^{-1} |u_t|^2 \, dz  \leq N (d, \nu,r , R) \int_{Q_R^+(z_0)}  \Big[ |Du|^2  + \lambda x_d^{-1}|u|^2 \Big]\, dz.
\end{equation}
\end{lemma}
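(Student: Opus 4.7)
The plan is to test the weak form of $\mathcal{L}_0[u] = 0$ against $\varphi = \eta^2 u$, where $\eta \in C_c^\infty$ satisfies $\eta \equiv 1$ on $Q_r^+(z_0)$, $\supp\eta \subset Q_R^+(z_0)$, and $|D\eta|^2 + |\eta_t| \leq N(R-r)^{-2}$. Since $u = 0$ on $\{x_d=0\}$, this $\varphi$ is admissible. After integrating by parts in $t$ and using that $\overline{a}_0$ does not depend on $t$, the time-derivative term becomes
\[
\int x_d^{-1}\overline{a}_0 u_t \eta^2 u\, dz = \tfrac{1}{2}\!\int_{\bR^d_+}\!\! x_d^{-1}\overline{a}_0 u^2\eta^2\Big|_{t=t_0} dx - \tfrac{1}{2}\!\int x_d^{-1}\overline{a}_0 u^2 (\eta^2)_t\, dz,
\]
the first piece being nonnegative. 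Combined with Young's inequality applied to the principal term (producing $\nu\int\eta^2 |Du|^2\, dz$ plus an error $N\int u^2 |D\eta|^2\, dz$) and the positive contribution $\lambda \nu\int \eta^2 x_d^{-1}u^2\, dz$ from the zeroth order term, one obtains \eqref{lemma.12-4}; the unweighted $u^2$ terms on the right are absorbed by $x_d^{-1}u^2$ using $x_d \leq R \leq 1$ on $Q_R^+(z_0)$ (since $z_0 = (z_0',0)$).

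\textbf{Setup for the second estimate.} The key observation is that because $\overline{a}_0, \overline{c}_0, \overline{a}_{ij}$ depend only on $x_d$, differentiating $\mathcal{L}_0[u] = 0$ formally in $t$ shows $v := u_t$ satisfies the same homogeneous equation $\mathcal{L}_0[v] = 0$ with $v = 0$ on $\{x_d=0\}$; this is rigorously justified via difference quotients $u^h(t,x) := (u(t+h,x) - u(t,x))/h$. Applying the already proven \eqref{lemma.12-4} to $v$ yields
\[
\int_{Q_{\rho_1}^+} |Du_t|^2\, dz \leq N(\rho_1,\rho_2)\int_{Q_{\rho_2}^+} x_d^{-1}|u_t|^2\, dz
\qquad (\rho_1 < \rho_2).
\]

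\textbf{Second estimate.} The plan is to use $\varphi = \eta^2 u_t$ in the weak form for $u$. Decompose $\overline{a}_{ij} = s_{ij} + a_{ij}$ into symmetric and antisymmetric parts. Using that $s_{ij}$ is $t$-independent, the symmetric contribution becomes $\int \eta^2 s_{ij}D_j u (D_i u)_t\, dz = \frac{1}{2}\int \eta^2 s_{ij}(D_i u D_j u)_t\, dz$, which integrates by parts in $t$ into a nonnegative boundary term at $t=t_0$ plus $-\frac{1}{2}\int (\eta^2)_t s_{ij} D_i u D_j u\, dz$. The zeroth-order term $\lambda \int x_d^{-1}\overline{c}_0 u u_t \eta^2\, dz = \frac{\lambda}{2}\int x_d^{-1}\overline{c}_0(u^2)_t\eta^2\, dz$ is treated identically and produces the linear (rather than quadratic) dependence on $\lambda$ on the right-hand side. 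The \emph{main obstacle} is the antisymmetric piece
\[
A = \int \eta^2 a_{ij} D_j u (D_i u)_t\, dz,
\]
for which integration by parts in $t$ yields only the tautology $A=A$ (since $a_{ij}D_i u D_j u \equiv 0$), and integration by parts in $x$ would introduce second-order spatial derivatives of $u$ that are not available. Instead we bound
\[
|A| \leq \varepsilon \int \eta^2 |Du_t|^2\, dz + C_\varepsilon \int \eta^2 |Du|^2\, dz,
\]
and control $\int \eta^2 |Du_t|^2$ by the Caccioppoli estimate for $u_t$ recorded above. Choosing $\varepsilon$ small relative to the Caccioppoli constant and absorbing, we arrive at an inequality of the form
\[
\phi(r) \leq \tfrac{1}{2}\,\phi(r_1) + C(r,r_1,R)\int_{Q_R^+}\bigl(|Du|^2 + \lambda x_d^{-1}|u|^2\bigr)\, dz
\qquad (r < r_1 < R),
\]
where $\phi(\rho) := \int_{Q_\rho^+} x_d^{-1}|u_t|^2\, dz$ and $C(r,r_1,R)$ grows polynomially in $(r_1-r)^{-1}$. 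A standard iteration lemma (e.g.\ Giaquinta--Martinazzi) applied along a geometric sequence $r_k \nearrow R$ then yields \eqref{lemma.12-4-t}.
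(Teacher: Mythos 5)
Your proof is correct and follows essentially the same strategy as the paper: test with $u\varphi^2$ for \eqref{lemma.12-4}, then for \eqref{lemma.12-4-t} test with $u_t\varphi^2$, invoke the difference-quotient Caccioppoli bound for $u_t$ to control the $\varepsilon\int|Du_t|^2$ error produced by Young, and close with the Giaquinta--Martinazzi iteration lemma. The only real departure is your symmetric/antisymmetric splitting of $\overline{a}_{ij}$: the paper skips this and applies Young directly to the full term $\int\overline{a}_{ij}D_ju\,D_iu_t\,\varphi^2$, which works just as well since in the general (non-symmetric) case the antisymmetric piece forces a Young-type bound and hence the iteration anyway, so the decomposition buys nothing here; it would only save the iteration when $\overline{a}_{ij}$ happens to be symmetric, a case the lemma does not assume. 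Both your symmetric-part integration by parts in $t$ and your treatment of the $\lambda\,\overline{c}_0\,uu_t$ term (writing it as a time derivative to get the linear rather than quadratic dependence on $\lambda$) match the paper's reasoning.
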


\begin{proof} 
By translations, it suffices to prove the lemma for the case $z_0 =0$. 
For $0 < r <R \leq 1$, using $u\varphi^2$ as a test function for \eqref{eq11.52}, where $\varphi \in C_0^\infty(Q_R)$ satisfying $\varphi \equiv 1$ in $Q_r$, we obtain 
\[
\begin{split}
&\frac{1}{2}\frac{d}{dt}\int_{B_1^+} x_d^{-1} \overline{a}_0(x_d)u^2 \varphi^2  \,dx + \lambda  \int_{B_1^+} \overline{c }_0(x_d)  x_d^{-1} u^2 \varphi^2  \,dx \\
&  + \int_{B_1^+} \overline{a}_{ij}(x_d) D_i u D_j u \varphi^2  \,dx = \int_{B_1^+}[\overline{a}_0(x_d) x_d^{-1} u^2 \varphi \varphi_t - 2 u \varphi \overline{a}_{ij}(x_d)D_j u D_i \varphi ]  \,dx.
\end{split}
\]
Then, by integrating in the time variable, using the Young's inequality, \eqref{a-c.cond}, and \eqref{elli-cond},  we obtain
\begin{equation*} 
\begin{split}
& \int_{Q_r^+} |Du|^2  \, dz + \lambda \int_{Q_r^+}x_d^{-1}|u|^2\, dz  \leq \frac{N (d, \nu)}{R-r} \int_{Q_R^+}x_d^{-1}|u|^2\, dz, 
\end{split}
\end{equation*}
where we have used the fact that $x_d^{-1} u^2 \geq u^2$ for $x_d \in (0,1)$. 
This proves  \eqref{lemma.12-4}. 

\medskip

Next,  by using the method of difference quotients, we can formally assume that $u_t$ solves the same equation as $u$ does. Therefore, applying \eqref{lemma.12-4} for $u_t$, we obtain
\begin{equation} \label{ut-1306}
\int_{Q_r^+(z_0)} \Big(|Du_t|^2  + \lambda x_d^{-1} |u_t|^2 \Big)\, dz   \leq \frac{N (d, \nu)}{R-r}  \int_{Q_R^+(z_0)}  x_d^{-1}|u_t|^2 \, dz, 
\end{equation}
Similarly, we can formally  use $u_t\varphi^2$ as a test function for \eqref{eq11.52} to yield
\[
\begin{split}
& \int_{B_1^+}x_d^{-1} \overline{a}_0(x_d) |u_t|^2 \varphi^2  \, dx + \lambda \int_{B_1^+} x_d^{-1} \overline{c}_0(x_d) u u_t \varphi^2 dx  \\
& + \int_{B_1^+} \big (\overline{a}_{ij} D_j u D_i u_t \varphi^2 +2 \varphi u_t \overline{a}_{ij} D_j u D_i \varphi \big) \, dx=0.
\end{split}
\]
Then, it follows from \eqref{a-c.cond}, \eqref{elli-cond} and Young's inequality
 that
\[
\begin{split}
& \int_{B_1^+}x_d^{-1} |u_t|^2 \varphi^2  \, dx +  \frac{\lambda}{2} \frac{d}{dt} \int_{B_1^+} x_d^{-1} \overline{c}_0(x_d) |u|^2 \varphi^2 dx  \\
& \leq N (d, \nu)\int_{B_1^+} \big[ \lambda x_d^{-1}  |u|^2 |\varphi_t| + |D_j u| |D_i u_t| \varphi +  |u_t| |D_j u| |D_i \varphi| \big]\varphi  \, dx \\
& \leq \frac{1}{2} \int_{B_1^+}x_d^{-1} |u_t|^2 \varphi^2  \, dx  \\
& \qquad +  N (d, \nu) \int_{B_1^+} \big[|Du|^2(\epsilon^{-1} \varphi + x_d|D\varphi|^2) + \epsilon |Du_t|^2 \varphi  + \lambda x_d^{-1} |u|^2 |\varphi_t| \varphi  \big]\, dx,
\end{split}
\]
for $\epsilon \in (0,1)$. Then integrating this equality in time variable, we obtain
\[
\begin{split}
&\int_{Q_r^+}x_d^{-1} |u_t|^2  \, dz \\
&\leq  N (d, \nu) \int_{Q_R^+} \big[ ( \epsilon^{-1} + (R-r)^{-2}) |Du|^2 + \lambda  x_d^{-1} (R-r)^{-1} |u|^2 + \epsilon |Du_t|^2 \big]\, dz.
\end{split}
\]
Using this, and \eqref{ut-1306} with slightly different balls, we conclude that
\[
\int_{Q_r^+}x_d^{-1} |u_t|^2  \, dz \leq  \frac{N}{(R-r)^2}  \int_{Q_R^+} \big[\epsilon^{-1}|Du|^2 + \lambda  x_d^{-1} |u|^2 \big]\, dz +  \frac{\epsilon N}{R-r} \int_{Q_R^+} x_d^{-1}|u_t|^2\, dz,
\]
for all  $0< r <R <1$, $N = N(d,\nu)>0$, and for all $\epsilon \in (0,1)$. 
From this, and by the standard iteration argument (see \cite[Lemma 5.13, p. 82]{Gia-Mar} for example), we conclude that
\[
\int_{Q_r^+}x_d^{-1} |u_t|^2  \, dz \leq N(d,\nu, r, R)  \int_{Q_R^+} \big[ |Du|^2 + \lambda  x_d^{-1} |u|^2 \big]\, dz.
\]
 The estimate \eqref{lemma.12-4-t} is proved and the proof of the lemma is completed.
\end{proof}

Next, we state and prove an important estimate for $w(z)= x_d^{-1} u(z)$ for which the assumption $\overline{a}_{id} =$ constant in \eqref{structure.cond} is needed.

\begin{lemma} \label{w-lemma-0529}
Let $z_0 = (z_0', 0)$ and $u\in \sH^{1}_{2}(Q_1^+(z_0))$ be a weak solution to \eqref{eq11.52} in $Q_1^+(z_0)$. 
Then, for every $0 < r <R$,
\[
\int_{Q_{r}^+(z_0)} \big[x_d |Dw|^2 + \lambda w^2 \big]  \,dz \leq N\int_{Q_R^+(z_0)} |w|^2  \,dz,
\]
where $w(z) =x_d^{-1} u(z)$ for $z=(z', x_d) \in Q_1^+$, and $N = N(r, R, d, \nu)>0$.
\end{lemma}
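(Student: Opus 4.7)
The plan is to expose a hidden energy identity satisfied by $w := u/x_d$. Formally substituting $u = x_d w$ into $\mathcal{L}_0 u = 0$ and invoking \eqref{structure.cond} to write $D_i(\overline{a}_{id} w) = \overline{a}_{id} D_i w$ (no $(\overline{a}_{id})'$ term is produced, precisely because $\overline{a}_{id}$ is constant), the function $w$ satisfies
\[
\overline{a}_0\, w_t + \lambda \overline{c}_0 w - D_i\bigl[x_d \overline{a}_{ij}(x_d) D_j w\bigr] - \overline{a}_{id}\, D_i w = 0.
\]
Testing this equation formally against $\eta^2 w$, where $\eta$ is a smooth cutoff with $\eta \equiv 1$ on $Q_r^+(z_0)$, $\eta \equiv 0$ on $\partial Q_R^+(z_0) \setminus \{x_d=0\}$, and $|D\eta| + |\eta_t| \leq N(r,R)$, produces $\nu \int \eta^2 x_d |Dw|^2$ from the ellipticity of $\overline{a}_{ij}$ and $\nu \lambda \int \eta^2 w^2$ from $\overline{c}_0 \geq \nu$: these are exactly the two quantities the lemma controls.

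Since $w$ is not guaranteed to sit in the functional space of Definition~\ref{weak-form}, the rigorous implementation is to test the weak formulation of \eqref{eq11.52} for $u$ against the regularised function $\phi_\epsilon := \eta^2 u/(x_d+\epsilon)$, as hinted in the introduction, and then to pass to the limit $\epsilon \to 0^+$; Lemma~\ref{Caccio-lemma} supplies sufficient regularity of $u$, $Du$, $u_t$ to make every step legitimate for fixed $\epsilon > 0$. I would expand the three pieces of the weak form separately: the $u_t$ piece, after integration by parts in $t$, produces a non-negative terminal contribution $\int_{B_R^+}\overline{a}_0 \eta^2 u^2/\bigl(2 x_d(x_d+\epsilon)\bigr)|_{t=t_0} dx$ (on the coercive side of the identity) plus an error $\int \overline{a}_0 \eta \eta_t u^2/(x_d(x_d+\epsilon)) dz$; the $\lambda$ piece is directly $\lambda \int \overline{c}_0 \eta^2 u^2/(x_d(x_d+\epsilon)) dz$, converging to $\lambda \int \overline{c}_0 \eta^2 w^2$; and the gradient piece, using $D_j u = x_d D_j w + \delta_{jd} w$ together with the three terms of $D_i \phi_\epsilon$ (including the singular one coming from $D_d (x_d+\epsilon)^{-1} = -(x_d+\epsilon)^{-2}$), decomposes into a main coercive piece converging to $\int \eta^2 x_d \overline{a}_{ij} D_j w D_i w$, a $D\eta$-cross term, and the contributions coming from $\delta_{jd}$ and the singular piece. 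The condition \eqref{structure.cond} enters exactly where these last contributions are handled by one integration by parts in $x_i$: constancy of $\overline{a}_{id}$ kills what would otherwise be an uncontrollable $\int (\overline{a}_{id})' \eta^2 w^2$ term, and the resulting boundary integral at $\{x_d = 0\}$ emerges with the favorable sign $+\tfrac{\overline{a}_{dd}}{2}\int \eta^2 w^2|_{x_d=0}$ on the coercive side and can be discarded.

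Letting $\epsilon \to 0^+$ by monotone/dominated convergence (using $u^2/(x_d(x_d+\epsilon)) \nearrow w^2$ and the Caccioppoli-type bounds of Lemma~\ref{Caccio-lemma}) and then applying Young's inequality to the cross term $2\int x_d \overline{a}_{ij} \eta w D_j w D_i\eta$ to absorb half of the main gradient term, I expect to reach
\[
\int_{Q_R^+(z_0)} \eta^2 \bigl[x_d |Dw|^2 + \lambda w^2\bigr] dz \leq N \int_{Q_R^+(z_0)} \bigl(x_d|D\eta|^2 + |D\eta| + |\eta_t|\bigr) w^2\, dz,
\]
which gives the conclusion with $N = N(d,\nu,r,R)$ upon specialising $\eta$. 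The main obstacle is the rigorous justification of the limit $\epsilon \to 0^+$, particularly for the singular factor $\epsilon/(x_d+\epsilon)^2$ generated when $D_d$ hits $(x_d+\epsilon)^{-1}$: this factor has $L^1(dx_d)$-norm equal to $1$ and concentrates on $\{x_d=0\}$, so its contribution in the limit is precisely the $\{x_d=0\}$-boundary integral above, whose favourable sign—and hence whose harmlessness—is ensured by \eqref{structure.cond}.
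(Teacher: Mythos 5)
Your overall strategy is the paper's strategy: regularise by testing the weak form of \eqref{eq11.52} against $\eta^2 u/(x_d+\epsilon)$, exploit the constancy of $\overline{a}_{id}$ from \eqref{structure.cond} to integrate by parts, control the leftovers by Young's inequality, and let $\epsilon\to 0^+$ by Fatou; the formal derivation you give for the $w$-equation is also a good way to see why the mechanism works. So the idea is right and matches the paper.

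One algebraic choice is worth flagging, because the paper makes it differently and this makes the $\epsilon\to 0^+$ step cleaner than what you describe. You propose to expand $D_j u = x_d D_j w + \delta_{jd} w$ with $w = x_d^{-1}u$, while at the same time $D_i \phi_\epsilon$ carries a $\delta_{id}\,(x_d+\epsilon)^{-2}u$ piece. The $\delta_{id}$ in that piece contracts against the \emph{row} $\overline{a}_{dj}$, not the \emph{column} $\overline{a}_{id}$, and after you substitute $D_j u = x_d D_j w + \delta_{jd}w$ you are left with a cross term of the form
\[
\int \eta^2\, \frac{x_d\,\epsilon}{(x_d+\epsilon)^2}\, w\,\overline{a}_{dj}\, D_j w\, dz ,
\]
in which $\overline{a}_{dj}$ (for $j\ne d$) is \emph{not} constant, so \eqref{structure.cond} cannot be used, and the factor $x_d\epsilon/(x_d+\epsilon)^2$ does not obviously send it to zero with only the a priori bounds $\int x_d|Dw|^2 < \infty$ and $\int w^2 < \infty$ in hand. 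The paper sidesteps this entirely by writing everything in terms of $w_\epsilon := u/(x_d+\epsilon)$: then $D_j u = (x_d+\epsilon) D_j w_\epsilon + \delta_{jd} w_\epsilon$, and when this is paired with $D_i(\eta^2 w_\epsilon)$ the only Kronecker delta that appears is $\delta_{jd}$, which picks out precisely the constant column $\overline{a}_{id}$. With this parametrisation there is also no ``concentrating $\epsilon/(x_d+\epsilon)^2$ measure'' to worry about: $w_\epsilon$ vanishes on $\{x_d=0\}$ for every fixed $\epsilon>0$ (since $u$ does, being in $\sW^1_2$), so the boundary integral produced by the integration by parts is identically zero before you take the limit, not just of favourable sign. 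Switching to $w_\epsilon$ throughout would make your outline line up exactly with the paper's proof and remove the one point you yourself flag as ``the main obstacle.''
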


\begin{proof} 
We can assume $z_0' =0$. 
For $\epsilon >0$ and sufficiently small, let $w_\epsilon(z) = (x_d +\epsilon)^{-1} u(z)$. 
Observe that 
\begin{equation} \label{D-w-epsi}
\begin{split}
& D_i w_\epsilon(z) = (x_d+\epsilon)^{-1} D_i u(z) - \delta_{id}(x_d+\epsilon)^{-2} u(z), \\
& \partial_t w_\epsilon =  (x_d+\epsilon)^{-1}\partial_t u(z),
\end{split}
\end{equation}
and moreover $w_\epsilon \in \sH^{1}_2(Q_1^+)$. 
We  then can test the equation of $u$ with $w_{\epsilon} \varphi^2$ to obtain
\[
\begin{split}
& \frac{1}{2}\frac{d}{dt}\int_{B_1^+} \eta_{\epsilon}(x_d)\overline{a}_0(x_d) w_{\epsilon}^2 \varphi^2  \, dx + \lambda \int_{B_1^+} \eta_{\epsilon}(x_d)\overline{c}_0(x_d) w_{\epsilon}^2\varphi^2  \, dx \\
& +  \int_{B_1^+} \overline{a}_{ij} D_j ((x_d +\epsilon) w_{\epsilon}) D_i w_{\epsilon} \varphi^2  \, dx \\
& = \int_{B_1^+}\big(\eta_\epsilon(x_d)\overline{a}_0(x_d) w_{\epsilon}^2 \varphi \varphi_t  - 2  w_\epsilon \varphi \overline{a}_{ij} D_j ((x_d+\epsilon)w_\epsilon) D_i\varphi \big)  \,dx,
\end{split}
\]
where $\eta_\epsilon(x_d) = \frac{x_d+\epsilon}{x_d}$. 
Since, $D_j((x_d +\epsilon) w_\epsilon) = (x_d +\epsilon) D_j w_\epsilon  + \delta_{jd} w_\epsilon$,  it follows  that
\[
\begin{split}
& \frac{1}{2}\frac{d}{dt}\int_{B_1^+} \eta_\epsilon(x_d) \overline{a}_0(x_d) w_\epsilon^2 \varphi^2 \, dx + \lambda \int_{B_1^+} \eta_\epsilon(x_d) \overline{c}_0(x_d) w_\epsilon^2\varphi^2  \, dx \\
& + \int_{B_1^+} (x_d+\epsilon) \overline{a}_{ij} D_j w_{\epsilon} D_i w_{\epsilon} \varphi^2  \, dx \\
& = \int_{B_1^+}\Big(\eta_\epsilon \overline{a}_0(x_d) w_\epsilon^2 \varphi \varphi_t  - 2 (x_d +\epsilon) w_\epsilon \varphi \overline{a}_{ij} D_j w_\epsilon D_i\varphi \\
& \qquad \qquad - 2 w_\epsilon^2 \varphi \overline{a}_{id}D_i\varphi  - \frac{1}{2}  D_i \big[\overline{a}_{id} w^2_\epsilon\big] \varphi^2 \Big) \, dx \\
& = \int_{B_1^+}\big(\eta_\epsilon \overline{a}_0(x_d) w_\epsilon^2 \varphi \varphi_t  - 2 (x_d+\epsilon) w_\epsilon \varphi \overline{a}_{ij} D_j w_{\epsilon} D_i\varphi - w_\epsilon^2 \varphi \overline{a}_{id}D_i\varphi \big)  \,dx,
\end{split}
\]
where we have used the fact that $\overline{a}_{id}$ are constant assumed in \eqref{structure.cond}  in the middle step of the above calculation to write $w_\epsilon \overline{a}_{id} D_i w_\epsilon = \frac{1}{2} D_i[\overline{a}_{id} w_\epsilon^2]$, and the integration by parts on this term in the last step. 
Now, by \eqref{elli-cond}, \eqref{a-c.cond}, and the fact that $\eta_\epsilon \geq 1$, it follows that
\[
\begin{split}
& \frac{1}{2}\frac{d}{dt}\int_{B_1^+} \eta_\epsilon(x_d) \overline{a}_0(x_d) w_\epsilon^2 \varphi^2  \, dx + \lambda \int_{B_1^+} w_\epsilon^2\varphi^2 \, dx  + \int_{B_1^+} (x_d+\epsilon) |D w_{\epsilon}|^2 \varphi^2  \, dx \\
&\leq N(d, \nu)\int_{B_1^+}\big(\eta_\epsilon w_\epsilon^2 |\varphi \varphi_t| +  (x_d+\epsilon) |w_\epsilon| \varphi |D w_{\epsilon}| |D\varphi| + w_\epsilon^2 |\varphi|  |D\varphi| \big) \,  dx \\
& \leq \frac{1}{2}  \int_{B_1^+} (x_d+\epsilon) |D w_{\epsilon}|^2 \varphi^2  \, dx \\
& \qquad + N(\nu, d) \int_{B_1^+}\big(\eta_\epsilon w_\epsilon^2 |\varphi \varphi_t| + (x_d+\epsilon) |w_\epsilon|^2 |D \varphi|^2 + w_\epsilon^2 |\varphi | |D\varphi|\big)  \, dx,
\end{split}
\]
where we applied Young's inequality in the last step.
 Then,
\[
\begin{split}
& \frac{1}{2}\frac{d}{dt}\int_{B_1^+} \eta_\epsilon(x_d) \overline{a}_0(x_d) w_\epsilon^2 \varphi^2  \, dx + \lambda \int_{B_1^+} w_\epsilon^2\varphi^2 \, dx  + \int_{B_1^+} (x_d+\epsilon) |D w_{\epsilon}|^2 \varphi^2  \, dx \\
& \leq  N(\nu, d) \int_{B_1^+} w_\epsilon^2 \big(\eta_\epsilon  \varphi |\varphi_t| +  |D \varphi|^2 +  |D\varphi|\big)  \,dx.
\end{split}
\]
Therefore,
\[
\begin{split}
& \lambda \int_{Q_1^+} w_\epsilon^2\varphi^2  \, dz  + \int_{Q_1^+} (x_d+\epsilon) |D w_{\epsilon}|^2 \varphi^2 \, dz\\
& \leq  N(\nu, d) \int_{Q_1^+} w_\epsilon^2 \big(\eta_\epsilon  \varphi |\varphi_t| +  |D \varphi|^2 +  |D\varphi|\big) \, dz \\
& \leq  N(\nu, d, R, r) \int_{Q_R^+} w^2  \,dz <\infty,
\end{split}
\]
where the assertion on the finiteness of the above integral is due to Hardy's inequality. 
Finally, from the explicit formula of $w_\epsilon$,  \eqref{D-w-epsi},  and  by Fatou's lemma, we obtain by passing $\epsilon \rightarrow 0^+$ that
\[
\int_{Q_{r}^+(z_0)} \big[x_d |Dw|^2 + \lambda w^2 \big]  \,dz \leq N(r, R, d, \nu) \int_{Q_R^+(z_0)} |w|^2 \, dz.
\]
The proof of the lemma is now complete.
\end{proof}

\begin{remark} \label{Hardy-remark} 
By Hardy's inequality, we have
\[
\int_{Q_R^+(z_0)} |w|^2 \, dz \leq N \int_{Q_R^+(z_0)} |Du|^2 \, dz.
\]
Therefore, the estimates in Lemma \ref{w-lemma-0529} are well-defined for $u \in \sH^1_2(Q_1^+(z_0))$.
\end{remark}

Next, we derive the following Caccioppoli type estimates on higher derivatives of solutions.

\begin{lemma} \label{higher-Caccio}
 Let $z_0 = (z_0', 0)$ and let $u\in \sH^{1}_{2}(Q_1^+(z_0))$ be a weak solution to \eqref{eq11.52} in $Q_1^+(z_0)$. 
 Then, for every $j, k \in \mathbb{N}\cup\{0\}$
\begin{align} \nonumber
& \int_{Q_r ^+(z_0)} x_d \big( |\partial_t^{j+1}D^{k}_{x'} w|^2 + |DD_{x'}^kw|^2\big) \, dz + \int_{Q_r ^+(z_0)} |DD_{x'}^k \partial_t^ju|^2 \, dz \\ \nonumber
& \qquad + \lambda \int_{Q_r ^+(z_0)} x_d|D_{x'}^k \partial_t^j w|^2 \, dz \\ \label{inter-eq3.50}
& \le N\int_{Q_{R}^+(z_0)}  x_d |w(z)|^2\, dz,
\end{align}
for every $0 < r < R <1$, where $w(z) = x_d^{-1} u(z)$ for $z = (z', x_d) \in Q_1^{+}(z_0)$, and $N = N(\nu, d, k, j, r, R) >0$.
\end{lemma}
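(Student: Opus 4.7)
My plan is to prove Lemma \ref{higher-Caccio} by induction on $m = j + k$, exploiting the fact that $\overline a_0$, $\overline c_0$, and $\overline a_{ij}$ depend only on the normal variable $x_d$, so that the operator $\mathcal{L}_0$ is translation invariant in the tangential directions $z' = (t, x')$. The base case $(j, k) = (0, 0)$ is essentially a rearrangement of the two preceding lemmas applied to $u$ itself: since $x_d |w|^2 = x_d^{-1} u^2$, the Caccioppoli estimates \eqref{lemma.12-4}--\eqref{lemma.12-4-t} immediately control $\int (|Du|^2 + \lambda x_d^{-1} u^2)$ and $\int x_d^{-1} |u_t|^2 = \int x_d |w_t|^2$ by $N \int x_d |w|^2$, while Lemma \ref{w-lemma-0529} combined with Remark \ref{Hardy-remark} bounds $\int x_d |Dw|^2$ by $N \int |w|^2 \le N \int |Du|^2 \le N \int x_d |w|^2$, all at the price of a slight shrinking of the cylinder.

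For the inductive step, the central technical input is that tangential difference quotients $\delta_h^t u$ and $\delta_h^{x_i} u$ for $i < d$ are weak solutions of $\mathcal{L}_0 v = 0$ in slightly smaller cylinders, with homogeneous Dirichlet data on $\{x_d = 0\}$. Iterating and passing to the limit, $v := \partial_t^j D_{x'}^k u$ belongs to $\sH^1_2(Q_{r_*}^+(z_0))$ and solves $\mathcal{L}_0 v = 0$ with $v = 0$ on $\{x_d = 0\}$ in every $Q_{r_*}^+(z_0) \Subset Q_1^+(z_0)$. Since tangential differentiation commutes with multiplication by $x_d^{-1}$, one has the key algebraic identity $x_d^{-1} v = \partial_t^j D_{x'}^k w$. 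Applying Lemma \ref{Caccio-lemma} (both parts) and Lemma \ref{w-lemma-0529} to $v$ on nested cylinders $Q_r^+ \subset Q_{r_1}^+ \subset Q_{r_2}^+$ exactly as in the base case yields
\[
\int_{Q_r^+} \Bigl[ x_d |\partial_t^{j+1} D_{x'}^k w|^2 + x_d |D D_{x'}^k \partial_t^j w|^2 + |D D_{x'}^k \partial_t^j u|^2 + \lambda x_d |D_{x'}^k \partial_t^j w|^2 \Bigr] \, dz \le N \int_{Q_{r_2}^+} x_d |D_{x'}^k \partial_t^j w|^2 \, dz,
\]
which controls the desired left-hand side of \eqref{inter-eq3.50} by $\int x_d |D_{x'}^k \partial_t^j w|^2$ on a slightly larger cylinder.

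The induction then closes by comparing this quantity to a term already present on the left-hand side of the inductive hypothesis of order $m-1$: if $k \ge 1$, one uses $|D_{x'}^k \partial_t^j w| \le |D (D_{x'}^{k-1} \partial_t^j w)|$ together with the hypothesis for $(j, k-1)$, whose left-hand side contains $\int x_d |D D_{x'}^{k-1} \partial_t^j w|^2$; if $k = 0$ and $j \ge 1$, one rewrites $\partial_t^j w = \partial_t^{(j-1)+1} D_{x'}^0 w$, which is exactly the time-derivative term on the left-hand side of the hypothesis for $(j-1, 0)$. Either way, $\int x_d |D_{x'}^k \partial_t^j w|^2 \le N \int_{Q_R^+} x_d |w|^2$ on the appropriate cylinders, which completes the induction. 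The principal technical obstacle is the rigorous justification of the difference-quotient procedure in the weighted $\sH^1_2$ setting—verifying that tangential quotients preserve the weak formulation of Definition \ref{weak-form} up to the flat boundary and remain in the correct function space—but this is entirely analogous to the formal $u_t$ argument already executed in the proof of Lemma \ref{Caccio-lemma} and is standard.
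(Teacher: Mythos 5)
Your argument is correct and follows essentially the same strategy as the paper's proof: establish the $(0,0)$ case by combining the two Caccioppoli-type lemmas (Lemma \ref{Caccio-lemma} and Lemma \ref{w-lemma-0529}) with Hardy's inequality and \eqref{lemma.12-4}, then observe that tangential difference quotients $\partial_t^j D_{x'}^k u$ solve the same homogeneous equation and bootstrap. The only cosmetic difference is that you organize the bootstrap as a single induction on $m = j + k$, whereas the paper does a double induction (first on $k$ with $j=0$, then on $j$). One small point worth making explicit: the quantity you actually control in the inductive step is $\int x_d |D D_{x'}^k \partial_t^j w|^2$, not the $\int x_d|DD_{x'}^k w|^2$ appearing in the lemma's statement, so the invariant you should carry through the induction is the slightly strengthened form with $\partial_t^j$ inserted — this is exactly what makes your $k\ge 1$ closure work, and it recovers the stated version because the $j=0$ instances supply the $\int x_d|DD_{x'}^k w|^2$ term for every $j$. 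The paper's proof implicitly does the same thing.
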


\begin{proof} 
By using translations, it suffices to prove the lemma for the case $z_0' =0$.
 The main idea is to apply Lemmas \ref{Caccio-lemma}, \ref{w-lemma-0529} and Hardy's inequality.
  We first prove \eqref{inter-eq3.50} for $j=0$ and $k \in \mathbb{N} \cup \{0\}$. 
  We observe that from Lemma \ref{Caccio-lemma} and Lemma  \ref{w-lemma-0529}, we have
\begin{align*} \nonumber
& \int_{Q_r ^+} x_d \big( |\partial_t w|^2 + |Dw|^2\big) \, dz + \int_{Q_r ^+} |D u|^2 \, dz  + \lambda \int_{Q_r ^+} x_d|w|^2 \, dz \\
& \le N(\nu, k, j, r, R) \int_{Q_{R'}^+} |w(z)|^2\, dz,
\end{align*}
where we have used the fact that $x_d \in (0,1)$ and $R' = (r+R)/2$. 
Recall that by Hardy's inequality
\[
\int_{Q_{R'}^+} |w|^2  \,dz = \int_{Q_{R'}^+} |u/x_d|^2  \,dz \leq N(d, R') \int_{Q_{R'}^+} |Du|^2  \,dz.
\]
From this and \eqref{lemma.12-4}, we see that 
\[
\int_{Q_{R'}^+} |w|^2  \,dz \leq N \int_{Q_R^+} x_d |w|^2  \,dz.
\]
Then, \eqref{inter-eq3.50} when $j=0$ and $k=0$ follows. 
We next prove \eqref{inter-eq3.50} for $j=0$ and $k=1$. 
By using the difference quotient method in $x'$-direction, we can formally assume that $D_{x'}u$ solves the same equation and satisfies the same boundary condition as $u$ does. 
Then, we apply  \eqref{inter-eq3.50} with $j=0$ and $k=0$ that we just proved for $D_{x'} u$ to infer that
\[
\begin{split}
& \int_{Q_r ^+} x_d \big( |\partial_t D_{x'}w|^2 + |DD_{x'}w|^2\big) \, dz + \int_{Q_r^+} \big(|D D_{x'} u|^2 + \lambda x_d |D_{x'}w|^2 \big)  \,dz \\
&  \leq N(\nu, r, R) \int_{Q_{R'}^+} x_d |D_{x'}w|^2 \, dz,
\end{split}
\]
Then, by combining this estimate with the estimate   \eqref{inter-eq3.50} with $j=0$ and $k=0$ that we just proved, we have
%
%
\[
\begin{split}
& \int_{Q_r ^+} x_d \big( |\partial_t D_{x'}w|^2 + |DD_{x'}w|^2 \big)\, dz+ \int_{Q_r^+} \big( |D D_{x'} u|^2 + \lambda x_d |D_{x'}w|^2 \big)  \,dz \\
&  \leq N(\nu, r, R) \int_{Q_R^+} x_d |w|^2  \,dz.
\end{split}
\]
Hence, \eqref{inter-eq3.50} is proved when $k=1$ and $j=0$. 
We can repeat the argument together with an induction on $k$ to obtain \eqref{inter-eq3.50} for $j=0$ and $k \in \mathbb{N} \cup\{0\}$.

\medskip

We now prove \eqref{inter-eq3.50} for general $j, k \in \mathbb{N} \cup \{0\}$. 
The argument is similar as before. 
Note that we have proved
\[
\begin{split}
& \int_{Q_r ^+} x_d \big( |\partial_t D_{x'}^kw|^2 + |DD_{x'}^kw\big)\, dz+ \int_{Q_r^+} \big( |D D_{x'}^k u|^2 + \lambda x_d |D_{x'}^ku|^2 \big) \, dz \\
& \leq N(\nu, r, R) \int_{Q_R^+} x_d |w|^2  \,dz,
\end{split}
\]
for all $k \in \mathbb{N} \cup \{0\}$ and $0 < r < R <1$. 
Again, by using the difference quotient method with respect to the time variable direction, we can formally assume that $u_t$ solves the same equation as $u$. 
Then, we obtain
\[
\begin{split}
& \int_{Q_r ^+} x_d |\big( \partial_t^2 D_{x'}^kw|^2 +|DD_{x'}^k \partial_t w|^2\big)\, dz + \int_{Q_r^+} \big( |D D_{x'}^k \partial_t u|^2 + \lambda x_d |D_{x'}^k \partial_t w|^2 \big)  \,dz \\
& \leq N(\nu, r, R) \int_{Q_{R'}^+} x_d |\partial_t w|^2  \,dz.
\end{split}
\]
From this, \eqref{lemma.12-4-t} and \eqref{lemma.12-4}, we infer that
\[
\begin{split}
& \int_{Q_r ^+} x_d |\partial_t^2 D_{x'}^kw|^2 + |DD_{x'}^k\partial_t w|^2 \big) \, dz + \int_{Q_r^+} \big( |D D_{x'}^k \partial_t u|^2 + \lambda x_d |D_{x'}^k \partial_t w|^2 \big)  \,dz \\
& \leq N(\nu, r, R) \int_{Q_{R}^+} x_d |w|^2  \,dz,
\end{split}
\]
which proves  \eqref{inter-eq3.50} for $j=1$ and $k \in \mathbb{N} \cup \{0\}$. 
The general case with $j \in \mathbb{N} \cup\{0\}$ can be proved similarly with an induction argument.  
The proof of the lemma is completed.
\end{proof}

Next, we give a corollary of Lemma \ref{higher-Caccio}.

\begin{corollary} \label{step-1} Under the assumptions of Lemma \ref{higher-Caccio}, we have
\begin{align} \label{0727.eqn} 
  \|D_{x'}^l \partial_t^j v\|_{Q_{1/2}^+} & \le N(d, \nu, l, j, l_0, j_0)  \|D_{x'}^{l_0} \partial_t^{j_0}v\|_{L_2(Q_{3/4}^+)}, 
\end{align}
for $j, l, j_0, l_0 \in \mathbb{N}\cup\{0\}$ with $j_0 \leq j, l_0\leq l$ and for $v(z) = x_d^{-1/2} u(z)$ with $z = (z', x_d) \in Q_1^+(z_0)$.
\end{corollary}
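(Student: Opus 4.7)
My plan is to combine tangential translation invariance of the PDE with Lemma \ref{higher-Caccio} and an algebraic use of the equation to bootstrap full Sobolev regularity, and then to apply the classical Sobolev embedding. First, I reduce to the case $l_0 = j_0 = 0$: since the coefficients $\overline{a}_{ij}$, $\overline{a}_0$, $\overline{c}_0$ depend only on $x_d$, the standard difference-quotient argument shows that $U := D_{x'}^{l_0} \partial_t^{j_0} u$ still solves $\mathcal{L}_0 U = 0$ in every slightly smaller subdomain of $Q_1^+(z_0)$, and because $x_d^{-1/2}$ commutes with $D_{x'}$ and $\partial_t$, the function $x_d^{-1/2} U$ equals $D_{x'}^{l_0} \partial_t^{j_0} v$. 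Thus, up to a harmless shrinkage of radii, it suffices to prove
\[
\|D_{x'}^a \partial_t^b v\|_{L^\infty(Q_{1/2}^+)} \leq N \|v\|_{L^2(Q_{3/4}^+)}
\]
for arbitrary $a, b \geq 0$.

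Next, I would apply Lemma \ref{higher-Caccio} to $D_{x'}^k \partial_t^j u$ (which again solves the same homogeneous PDE by tangential invariance) along a telescoping sequence of radii $3/4 > r_1 > r_2 > \cdots > 1/2$. Since $D_{x'}^k \partial_t^j v = x_d^{1/2} D_{x'}^k \partial_t^j w$, the identity $\int x_d |D_{x'}^k \partial_t^j w|^2 = \int |D_{x'}^k \partial_t^j v|^2$ converts \eqref{inter-eq3.50} into unweighted $L^2$ bounds, on the successively smaller balls, for all tangential and time derivatives of $v$ and for $D(D_{x'}^k \partial_t^j u)$, each controlled by $N \|v\|_{L^2(Q_{3/4}^+)}$. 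To climb into higher $D_d$-derivatives, I exploit the structural condition \eqref{structure.cond}: because $\overline{a}_{dj}$ is constant, the equation rearranges algebraically to
\[
\overline{a}_{dd}(x_d)\, D_d^2 u = x_d^{-1}\big(\overline{a}_0 u_t + \lambda \overline{c}_0 u\big) - \sum_{(i,j)\neq (d,d)} \overline{a}_{ij}(x_d)\, D_i D_j u,
\]
and every term on the right is already controlled in $L^2$ by the previous bounds combined with Hardy's inequality (absorbing $x_d^{-1}$ against bounds on $u_t$ and $u$). Applying this identity to the tangentially-smoothed solutions $D_{x'}^k \partial_t^j u$ and iterating to generate $D_d^3, D_d^4, \ldots$ yields $L^2$ control of arbitrarily high mixed derivatives of $u$ on $Q_{1/2}^+$, all bounded by $N \|v\|_{L^2(Q_{3/4}^+)}$.

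Finally, the classical Sobolev embedding $H^M(Q_{1/2}^+) \hookrightarrow L^\infty(Q_{1/2}^+)$ with $M > (d+1)/2$ converts these $L^2$ derivative bounds into pointwise bounds for $D_{x'}^l \partial_t^j u$ up to the boundary $\{x_d = 0\}$. Combined with the Dirichlet condition $u|_{x_d = 0} = 0$, a Taylor expansion in $x_d$ gives $|D_{x'}^l \partial_t^j u(t, x', x_d)| \leq N x_d \|D_d D_{x'}^l \partial_t^j u\|_{L^\infty}$, so dividing by $x_d^{1/2}$ yields the desired pointwise bound on $D_{x'}^l \partial_t^j v$. The main obstacle I anticipate is the bookkeeping of the $x_d$-derivative bootstrap in the second paragraph: each iteration of the algebraic identity introduces a fresh $x_d^{-1}$ weight that must be absorbed, and one must verify at every step that the higher-order Caccioppoli estimates from Lemma \ref{higher-Caccio} supply sufficient weighted regularity of $u_t$, $u$, and the tangential derivatives for Hardy's inequality to close the loop.
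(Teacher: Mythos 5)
Your strategy diverges from the paper's and contains a genuine gap that makes the bootstrap unworkable. The core problem is that the coefficients $\overline a_{ij}$, $\overline a_0$, $\overline c_0$ are only \emph{measurable} functions of $x_d$, and your argument requires differentiating them in $x_d$. Already the first step is off: the structure condition \eqref{structure.cond} makes $\overline a_{id}$ constant but says nothing about $\overline a_{dj}$ for $j<d$ (the matrix is not assumed symmetric), so
\[
D_d\bigl(\overline a_{dj}(x_d)\,D_j u\bigr) = (D_d\overline a_{dj})\,D_j u + \overline a_{dj}\,D_dD_j u, \qquad j<d,
\]
and the term $(D_d\overline a_{dj})D_j u$ is not a function. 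Consequently the algebraic rearrangement isolating $\overline a_{dd}D_d^2u$ is not valid; the quantity the equation actually controls is $D_d(\overline a_{dj}D_ju)$ as a single block (this is precisely the auxiliary function $\cU=\overline a_{di}D_iu$ the paper introduces in Lemma \ref{u-x-d.bound}), which cannot be split into $D_d^2u$ plus lower-order terms without assuming $C^1$ coefficients. The iteration to $D_d^3u,D_d^4u,\ldots$ is then entirely out of reach, since it would require $D_d^m\overline a_{ij}$, and moreover each iteration produces a worse singular prefactor $x_d^{-m}$ that Hardy's inequality cannot absorb without more tangential regularity than is available. Because the full-dimensional embedding $H^M(Q_{1/2}^+)\hookrightarrow L^\infty$ needs control of $D_d^M u$ for $M>(d+1)/2$, your argument does not close.

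The paper's proof sidesteps this entirely by never asking for more than one $x_d$-derivative. It applies the Sobolev embedding \emph{only in the tangential variables} $z'=(t,x')$ to the function $x_d\mapsto D_dD_{x'}^l\partial_t^j u(\cdot,x_d)$, obtaining for each fixed $x_d$ a pointwise-in-$z'$ bound by a $W^{k,k}_2(Q_{1/2}')$ norm; Lemma \ref{higher-Caccio} furnishes exactly the required $L^2$ control of arbitrary tangential and time derivatives of $D u$. One then integrates in $x_d$ via H\"older and the fundamental theorem of calculus, using the Dirichlet condition $u(x',0)=0$, to pick up the factor $x_d^{1/2}$ and conclude. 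Your final step (Taylor expansion plus division by $x_d^{1/2}$) is morally the same idea, but it is applied after an unattainable full Sobolev bound; the paper's version applies it with only one $D_d$ derivative in hand, which is all the equation can supply. If you wish to salvage your approach along these lines, replace the full-dimensional embedding by an anisotropic one, and replace the $D_d$-bootstrap by the one-dimensional integration-in-$x_d$ argument.
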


\begin{proof} 
Applying the Sobolev embedding theorem in the variable $z' \in \bR \times \bR^{d-1}$, we have
\begin{align*}
 |D_d D_{x'}^{l}\partial_t^j u(z',x_d)|\le  N(d) \|D_dD_{x'}^l \partial_t^j  u(\cdot, x_d)\|_{W^{k,k}_2(Q_{1/2}')},
\end{align*}
for any $z=(z',x_d)\in Q_{1/2}^+$, where $k> d/2$ is an even integer,  $Q_{1/2}'$ is the ball with respect to the variable $z' \in \bR^d$ centered at the origin.  Here, $W^{k,k}_2(Q_{1/2}')$ denotes the usual Sobolev space in which $k$ is the order of the derivative with norm
\[
\|f\|_{W^{k,k}_2(Q_{1/2}')} = \sum_{l+j \leq k} \|D_{x'}^l \partial_t^j  f\|_{L_p(Q_{1/2}')}.
\]
Then, using Lemma \ref{higher-Caccio} for $D_{x'}^{l_0} \partial_t^{j_0} u$,  we have, for $z'\in Q_{1/2}'$,
\begin{align*}  
\int_{0}^{1/2} |D_d D_{x'}^l \partial_t^j  u(z',x_d)|^2 \, dx_d
&\le N \int_{0}^{1/2} \|D_d D_{x'}^l \partial_t^j  u(\cdot, x_d)\|^2_{W^{k/2,k}_2(Q_{1/2}')}\,dx_d\nonumber\\
&\le N\|D_{x'}^{l_0} \partial_t^{j_0} u\|_{L_2(Q_{3/4}^+, x_d^{-1})}^2.
\end{align*}
From this, and  by H\"older's inequality, we infer that, for $x_d \in (0,1/2)$,
\begin{align} \nonumber 
&\int_{0}^{x_d} |D_d D_{x'}^l \partial_t^j  u(z', s)|\, ds \le \left(\int_{0}^{1/2}|D_d D_{x'}^l \partial_t^j u(z', s)|^2 \, ds\right)^{1/2} \left(
\int_{0}^{x_d}\,ds \right)^{1/2}\\  \label{eq4.16bb}
& \le N x_d^{1/2} \|D_{x'}^{l_0} \partial_t^{j_0}v\|_{L_2(Q_{3/4}^+)}.
\end{align}
Then, by the fundamental theorem of calculus and the boundary condition that $D_{x'}^l \partial_t^j u(x', 0)=0$, we infer  that
\begin{equation*} \label{0727.eqn1}
|D_{x'}^l \partial_t^j u(z', x_d)| \leq N x_d^{1/2} \|D_{x'}^{l_0} \partial_t^{j_0} v\|_{L_2(Q_{3/4}^+)}, 
\end{equation*}
for  $(z', x_d) \in Q_{1/2}^+$, and \eqref{0727.eqn} is proved. 
\end{proof}

We next state and prove the interior pointwise estimates for solutions and its spatial derivatives.
\begin{lemma} \label{interior-Linf} 
Let $z_0 = (t_0, x_0) \in \Omega_T$ and suppose that $B_{2r}(x_0) \subset \bR^d_+$. 
If $u \in \sH^{1}_{2}(Q_{2r}(z_0))$ is a weak solution to \eqref{eq11.52}, then we have
\[
\sup_{z \in Q_{r}(z_0)} |x_d^{-1/2} u(z)| \leq N \left( \fint_{Q_{2r}(z_0)} |x_d^{-1/2} u(z)|^2 \, dz \right)^{1/2} 
\]
and
 \begin{align*} 
     \|Du\|_{L_\infty(Q_r(z_0))}  \leq N \left(\fint_{Q_{2r}(z_0)}\big( |Du|^2 + \lambda |x_d^{-1/2}  u(z)|^2 \big) \, dz \right)^{1/2},
         \end{align*}
for  $N = N(\nu,  d)>0$.
\end{lemma}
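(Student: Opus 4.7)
My plan is to prove these interior pointwise estimates through an argument paralleling the boundary case in Lemmas \ref{Caccio-lemma}, \ref{higher-Caccio} and Corollary \ref{step-1}, but now exploiting the fact that $B_{2r}(x_0) \subset \bR^d_+$ keeps the equation uniformly nondegenerate on $Q_{2r}(z_0)$. The two ingredients I would combine are: (i) higher tangential regularity plus Sobolev embedding in the $(t,x')$-variables, and (ii) the structure condition \eqref{structure.cond} ($\overline{a}_{dd}$ is a positive constant) to recover the $x_d$-direction from the equation itself.

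First, I would establish an interior Caccioppoli estimate by testing $\mathcal{L}_0[u]=0$ against $u\varphi^2$ for a cutoff $\varphi$; the usual energy method gives, for $r \leq R_1 < R_2 \leq 2r$,
\[
\int_{Q_{R_1}(z_0)} \big(|Du|^2+\lambda x_d^{-1}|u|^2\big)\,dz \le \frac{N}{(R_2-R_1)^2}\int_{Q_{R_2}(z_0)} x_d^{-1}|u|^2\,dz,
\]
which is the interior analog of Lemma \ref{Caccio-lemma}. Since $\overline{a}_0,\overline{c}_0,\overline{a}_{ij}$ depend only on $x_d$, the tangential derivatives $\partial_t^j D_{x'}^{\alpha} u$ all solve the same equation \eqref{eq11.52}; iterating Caccioppoli using difference quotients in $(t,x')$ then produces interior higher-order $L^2$ bounds on $\partial_t^j D_{x'}^{\alpha} u$ of all orders. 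Anisotropic Sobolev embedding in $(t,x') \in \bR \times \bR^{d-1}$, applied slice-by-slice in $x_d$ with $k>d/2$ tangential derivatives, transfers these $L^2$ bounds to pointwise bounds on $u$ and $D_{x'}u$ on $Q_r(z_0)$. To recover $D_d u$ and to close the first estimate, I would use the structure condition: since $\overline{a}_{dd}$ is a positive constant, the equation algebraically determines
\[
\overline{a}_{dd}\,D_{dd} u = x_d^{-1}(\overline{a}_0 u_t + \lambda \overline{c}_0 u) - \sum_{(i,j)\neq (d,d)} \overline{a}_{ij} D_{ij} u - \sum_{j<d} (D_d \overline{a}_{dj})\, D_j u,
\]
so pointwise tangential bounds plus the equation immediately give pointwise control on $D_{dd} u$, hence on $D_d u$ and on $u$ itself.

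The principal technical hurdle is arranging that the resulting constant $N$ depends only on $d$ and $\nu$, with no dependence on the ratio $x_{0,d}/r$ that measures how close $B_{2r}(x_0)$ is to the degenerate set $\{x_d=0\}$. The natural scaling $\tilde u(\tau,y) = u(t_0 + r\tau,\, x_0 + ry)$ sends $Q_r,Q_{2r}$ to $Q_1,Q_2$, preserves the ellipticity and boundedness of the rescaled coefficients $\tilde{a}_{ij}(y_d) = \overline{a}_{ij}(x_{0,d}+r y_d)$, and replaces $\lambda$ by $r\lambda$, so all the Caccioppoli and Sobolev-type constants stay uniform. On the inner ball $B_r(x_0)$, the hypothesis $x_{0,d} > 2r$ forces $x_d \in (x_{0,d}-r, x_{0,d}+r) \subset (r,\,2x_{0,d})$, so $x_d^{-1/2}$ is comparable (up to a universal factor) to $x_{0,d}^{-1/2}$ throughout $Q_r(z_0)$. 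On $B_{2r}(x_0)$, the weight $x_d^{-1/2}$ may become large near the slice $\{x_d = x_{0,d}-2r\}$ when the ball is close to the boundary, but this is precisely balanced by the matching $x_d^{-1/2}$ weight already appearing in the right-hand $L^2$ norms of the lemma, so the estimates come out with a universal constant depending only on $d$ and $\nu$.
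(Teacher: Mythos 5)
The paper proves this lemma by a much shorter route than you take: after translating and scaling to $r=1$, the hypothesis $B_{2r}(x_0)\subset\bR^d_+$ forces $x_{0d}>2$, so on $Q_{3/4}(z_0)$ the leading coefficient $x_d$ stays in $(5/4,\infty)$, i.e.\ $x_d^{-1}$ is bounded. The equation, written as $\overline a_0 u_t + \lambda\overline c_0 u - x_d D_i(\overline a_{ij}D_ju)=0$, is then a nondegenerate parabolic equation on $Q_{3/4}(z_0)$, and the authors simply cite a standard interior regularity lemma (\cite[Lemma 3.5]{DK11}). There is no need to rebuild the Caccioppoli--Sobolev--equation bootstrap from Section~\ref{subsec:hom}: the only reason that machinery is needed near $\{x_d=0\}$ is the degeneracy of the leading coefficient, which is absent in the interior.

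Beyond being a longer route, your proposal has two concrete gaps. First, the assertion that the scaling $\tilde u(\tau,y)=u(t_0+r\tau,x_0+ry)$ keeps ``all the Caccioppoli and Sobolev-type constants uniform'' is not correct. Carrying out the scaling, the rescaled equation has the same form as $\mathcal{L}_0[\tilde u]=0$ but with the degenerate factor $x_d^{-1}$ replaced by $(y_d+M)^{-1}$, $M:=x_{0d}/r\in(2,\infty)$, and $\lambda$ replaced by $r\lambda$. The leading coefficient $y_d+M$ is not bounded above independently of $M$, and the interior analogue of the Caccioppoli estimate you write picks up a factor of $\max_{Q_{R_2}}x_d\sim M$ when you absorb the unweighted $|u|^2|D\varphi|^2$ term coming from the test-function calculation into the weighted $x_d^{-1}|u|^2$ term (in the boundary case, Lemma \ref{Caccio-lemma}, this absorption uses $x_d\le 1$, which is unavailable here). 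So the constants in a naive transposition of the boundary argument do depend on $x_{0d}/r$, and the last paragraph of your proposal, which asserts that the weights ``precisely balance,'' does not actually establish this.

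Second, the algebraic identity you use to recover $D_{dd}u$,
\begin{equation*}
\overline{a}_{dd}\,D_{dd}u = x_d^{-1}(\overline a_0 u_t+\lambda\overline c_0 u)-\sum_{(i,j)\neq(d,d)}\overline a_{ij}D_{ij}u-\sum_{j<d}(D_d\overline a_{dj})D_j u,
\end{equation*}
is problematic on two counts. The coefficients $\overline a_{dj}$ for $j<d$ are merely measurable in $x_d$ (the structure condition \eqref{structure.cond} fixes the $d$-th \emph{column}, not the $d$-th row), so $D_d\overline a_{dj}$ is not defined and one must keep the divergence form; this is exactly why the paper's boundary argument works with $\mathcal{U}=\overline a_{dj}D_ju$ and the bound on $D_d\mathcal{U}$. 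Moreover the sum $\sum_{(i,j)\neq(d,d)}\overline a_{ij}D_{ij}u$ contains the mixed derivatives $D_dD_{x'}u$, which tangential Sobolev embedding does not control pointwise, so ``pointwise tangential bounds plus the equation immediately give pointwise control on $D_{dd}u$'' does not go through; in Lemma \ref{u-x-d.bound} the paper instead integrates $|D_d\mathcal{U}|$ in $x_d$, relying on the $L^1$-in-$x_d$ bound \eqref{in-DDx'-0603}. Both issues are avoidable, but not by the shortcuts as written; the paper's observation that the interior equation is simply uniformly parabolic is both shorter and cleaner.
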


\begin{proof}  
 We write $x_0 = (x_0', x_{0d}) \in \bR^{d-1} \times \bR_+$. 
 By scaling and without loss of generality, we assume $r =1$. 
 As $x_{0d} \geq 2$, the coefficients $x_d^{-1}$ is bounded  in $Q_{3/4}(z_0)$.  
 Then, we can follow the  standard regularity estimates for parabolic equations with uniformly elliptic coefficients (e.g., \cite[Lemma 3.5]{DK11}) to obtain
\[
 \sup_{z \in Q_1^+(z_0)}|x_d^{-1/2}u(z)| \le N\Big(\fint_{Q_{2}(z_0)}|x_d^{-1/2}u(z)|^2 \, dz \Big)^{1/2}.
\]
and
\[
 \|Du\|_{L_\infty(Q_{1}(z_0))}\le N\Big(\fint_{Q_{2}(z_0)}\big( |Du(z)|^2 + \lambda |x_d^{-1/2} u(z)|^2 \big) \, dz \Big)^{1/2}.
\]
\end{proof}
Our next result is the local boundary Lipschitz estimates of solutions to \eqref{eq11.52}.
\begin{lemma} \label{u-x-d.bound} 
Let $z_0 = (z', 0)$ and $u\in \sH^{1}_{2}(Q_{2r}^+(z_0))$ be a weak solution to \eqref{eq11.52} in $Q_1^+(z_0)$. 
Then
\begin{equation} \label{Dd-u.est0603}
\|Du\|_{L_\infty(Q_{r}^+(z_0))} \leq N \big( \|Du\|_{L_2(Q_{2r}^+(z_0))} + \sqrt{\lambda} \|u\|_{L_2(Q_{2r}^+(z_0),x_d^{-1})} \big),
\end{equation}
where $N = N(\nu, d) >0$. 
\end{lemma}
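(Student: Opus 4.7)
The plan is to first reduce, by translation and the anisotropic scaling $(t,x)\mapsto(t/(2r),x/(2r))$ (under which \eqref{eq11.52} is invariant up to replacing $\lambda$ by $2r\lambda$), to the case $z_0=0$ and $r=1/2$, and to set
\[
v:=x_d^{-1/2}u,\qquad M:=\|Du\|_{L_2(Q_1^+)}+\sqrt{\lambda}\,\|u\|_{L_2(Q_1^+,\,x_d^{-1})}.
\]
Hardy's inequality (using $u|_{x_d=0}=0$) then gives $\|v\|_{L_2(Q_{3/4}^+)}\le N\|Du\|_{L_2(Q_1^+)}\le NM$, and by construction $\sqrt{\lambda}\,\|v\|_{L_2(Q_{3/4}^+)}\le M$. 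The goal becomes $\|Du\|_{L_\infty(Q_{1/2}^+)}\le NM$.

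For the tangential components, for each $i<d$ one has $D_iu=x_d^{1/2}D_iv$, and Corollary \ref{step-1} with $l=1,\ j=0,\ l_0=j_0=0$ yields $\|D_iv\|_{L_\infty(Q_{1/2}^+)}\le N\|v\|_{L_2(Q_{3/4}^+)}\le NM$. Consequently $|D_iu(z)|\le Nx_d^{1/2}M\le NM$ on $Q_{1/2}^+$.

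For the normal component, I introduce the $d$-th flux $g:=\overline{a}_{dj}D_ju$. The structural assumption \eqref{structure.cond} together with \eqref{elli-cond} makes $\overline{a}_{dd}\ge\nu$ a positive constant, so the identity $\overline{a}_{dd}D_du=g-\sum_{k<d}\overline{a}_{dk}(x_d)D_ku$ together with the tangential estimate reduces matters to an $L_\infty$-bound on $g$. Because $\overline{a}_{ij}$ depends only on $x_d$, the equation gives weakly
\[
\partial_{x_d}g\;=\;x_d^{-1}\bigl(\overline{a}_0\,u_t+\lambda\overline{c}_0\,u\bigr)\;-\;\sum_{i<d,\,j}\overline{a}_{ij}(x_d)\,D_iD_ju.
\]
For each $z'\in Q_{1/2}'$ I would integrate from $x_d\in(0,1/2)$ up to a fixed interior height $x_d^*=1/2$, obtaining $g(z',x_d)=g(z',x_d^*)-\int_{x_d}^{x_d^*}\partial_sg(z',s)\,ds$. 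The interior value $|g(z',1/2)|\le NM$ follows from Lemma \ref{interior-Linf} applied on a ball of radius $\approx1/4$ around $(z',1/2)$. Each remaining summand of the integrand is handled pointwise via Corollary \ref{step-1}: one obtains $|u_t(z',s)|\le Ns^{1/2}M$, $|D_iD_ju(z',s)|\le Ns^{1/2}M$ for $i,j<d$, and analogous bounds for mixed derivatives $D_dD_iu$ ($i<d$) using the identity $D_iD_du=\tfrac12 s^{-1/2}D_iv+s^{1/2}D_dD_iv$ together with higher-order versions of Corollary \ref{step-1}; combining with $\int_{x_d}^{1/2}s^{-1/2}\,ds\le N$ yields contributions bounded by $NM$.

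The hardest step will be the $\lambda u$ contribution $\int_{x_d}^{1/2}\lambda s^{-1}|u(z',s)|\,ds$: bounding $|u|\le Ns^{1/2}\|v\|_{L_\infty}$ alone produces an unwanted factor of $\lambda$. To absorb this, I plan to use both estimates $\|v\|_{L_\infty(Q_{1/2}^+)}\le N\|v\|_{L_2(Q_{3/4}^+)}\le NM$ and $\sqrt{\lambda}\,\|v\|_{L_\infty(Q_{1/2}^+)}\le N\sqrt{\lambda}\,\|v\|_{L_2(Q_{3/4}^+)}\le NM$ (the latter exploits that $\sqrt{\lambda}\|u\|_{L_2(x_d^{-1})}$ sits inside $M$), split $\lambda=\sqrt{\lambda}\cdot\sqrt{\lambda}$, and—when $\lambda$ is large—truncate the integration height at $\min(1/2,\lambda^{-1/2})$ while invoking Lemma \ref{interior-Linf} at that finer scale, so that the $\lambda$-dependence is absorbed into the $\sqrt{\lambda}$-weighted term of $M$. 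Assembling everything then gives $|g(z)|\le NM$ on $Q_{1/2}^+$, and with the tangential bound the estimate $\|Du\|_{L_\infty(Q_{1/2}^+)}\le NM$ follows.
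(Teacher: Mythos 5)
Your overall strategy coincides with the paper's: control tangential derivatives through Corollary~\ref{step-1} and Hardy's inequality, then integrate the equation for the conormal flux $\mathcal U=\overline a_{dj}D_ju$ (your $g$) in $x_d$ from an interior reference height, invoking ellipticity of $\overline a_{dd}$ to recover $D_du$. The reductions, the formula for $\partial_{x_d}g$, the interior value at height $1/2$ via Lemma~\ref{interior-Linf}, and the $u_t$ contribution are all handled the same way the paper handles them.

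The gap is exactly at the spot you flag as ``the hardest step,'' and your proposed fix does not close it. Write $v=x_d^{-1/2}u$ and $M=\|Du\|_{L_2(Q_1^+)}+\sqrt\lambda\,\|v\|_{L_2(Q_1^+)}$. The $\lambda u$ piece of $\partial_sg$ is $\lambda\,s^{-1}\overline c_0 u=\lambda\,\overline c_0\,s^{-1/2}v$, so after integrating $\int_{x_d}^{1/2}s^{-1/2}\,ds\lesssim1$ one needs $\lambda\|v\|_{L_\infty(Q_{1/2}^+)}\le NM$, \emph{not} merely $\sqrt\lambda\,\|v\|_{L_\infty}\le NM$. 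The paper gets the full $\lambda$-weighted bound — this is precisely the content of its displayed estimate \eqref{lambda-v} — by chaining the $L_\infty$-to-$L_2$ bound of Corollary~\ref{step-1} with the $\lambda$-gain built into the Caccioppoli estimate \eqref{lemma.12-4}: from $\lambda\int_{Q_r^+}x_d^{-1}|u|^2\le N\int_{Q_R^+}x_d^{-1}|u|^2$ one has $\sqrt\lambda\,\|v\|_{L_2(Q_r^+)}\le N\|v\|_{L_2(Q_R^+)}$, hence $\lambda\|v\|_{L_\infty(Q_{1/2}^+)}\le N\lambda\|v\|_{L_2(Q_{3/4}^+)}\le N\sqrt\lambda\,\|v\|_{L_2(Q_1^+)}\le NM$. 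You never invoke this $\lambda$-gain, and your remaining tools cannot supply the missing $\sqrt\lambda$. Your truncation idea fails for two reasons: applying Lemma~\ref{interior-Linf} on a ball of radius comparable to $\lambda^{-1/2}$ around $(z',\lambda^{-1/2})$ produces a factor $|Q_{\lambda^{-1/2}/2}|^{-1/2}\sim\lambda^{(d+1)/4}$ from the ball average, and even setting that aside, $\int_{x_d}^{\lambda^{-1/2}}\lambda s^{-1/2}\,ds\sim\lambda^{3/4}$ combined with $\sqrt\lambda\,\|v\|_{L_\infty}\le NM$ still leaves a $\lambda^{1/4}$ growth. So the plan as written would not produce a constant $N=N(\nu,d)$ independent of $\lambda$.

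A secondary, smaller gap: you propose pointwise bounds on the mixed derivatives $D_dD_iu$ ($i<d$) via the identity $D_iD_du=\tfrac12s^{-1/2}D_iv+s^{1/2}D_dD_iv$ and ``higher-order versions of Corollary~\ref{step-1},'' but Corollary~\ref{step-1} controls only $D_{x'}^l\partial_t^jv$, not $D_d$-derivatives of $v$. The paper circumvents this by using the integrated (in $x_d$) bound \eqref{eq4.16bb}, which yields \eqref{in-DDx'-0603}: $\int_0^{1/2}|DD_{x'}u(z',s)|\,ds\le N\|Du\|_{L_2(Q_1^+)}$, exactly the form needed after integrating the flux equation. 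You should use that instead of attempting a pointwise bound on $D_dD_iv$.
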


\begin{proof}  
By translation and dilation, we assume that $z_0' =0$ and $r=1/2$. 
It follows from Corollary \ref{step-1} and Hardy's inequality that 
\begin{equation} \label{Dux'-0606}
\sup_{z \in Q_{1/2}^+ }|x_d^{-1/2}D_{x'} u| \leq N \|Du\|_{L_{2}(Q_1^+)}.
\end{equation}
Therefore, we only need to control $D_du$. Let us denote $v(z) = x_d^{-1/2} u(z)$ with $z = (z', x_d) \in Q_{1}^+$. Using  Corollary \ref{step-1} again, we see
\begin{equation}  \label{v-boun--05.27}
\begin{split}
&  \|v\|_{L_\infty(Q_{1/2}^+)} \leq N \|v\|_{L_2(Q_{3/4}^+)} \quad \text{ and} \\
& \|D_{x'}^2 v\|_{L_\infty(Q_{1/2}^+)} + \|D_{x'} v\|_{L_\infty(Q_{1/2}^+)} \leq N \|v\|_{L_2(Q_{3/4}^+)}.
\end{split}
\end{equation}
Also, by combining Corollary \ref{step-1}  and Lemma \ref{Caccio-lemma} with slightly different balls, we obtain
\[
\|v_t\|_{L_\infty(Q_{1/2}^+)} \leq  N \big( \|Du\|_{L_2(Q_1^+)} + \sqrt{\lambda} \|v\|_{L_2(Q_1^+)} \big).
\]
From this, \eqref{v-boun--05.27}, and Lemma \ref{Caccio-lemma}, it follows that
\begin{equation} \label{lambda-v}
 \lambda \|v\|_{L_\infty(Q_{1/2}^+)} + \|v_t\|_{L_\infty(Q_{1/2}^+)}  \leq  N \big( \|Du\|_{L_2(Q_1^+)} + \sqrt{\lambda} \|v\|_{L_2(Q_1^+)} \big).
\end{equation}
On the other hand, for each $\hat{z} = (z', 1/2)$ with $z' \in Q_{1/2}'$, it follows from Lemma \ref{interior-Linf} that
\begin{align}\notag
|Du(\hat{z})|  & \leq N \left(\fint_{Q_{1/100}(\hat{z})} |Du(z)|^2  + \lambda |v(z)|^2 \,dz \right)^{1/2} \\  \label{Dhatz-0531.est}
&  \leq N\big[ \|Du\|_{L_2(Q_1^+)} + \sqrt{\lambda} \|v\|_{L_2(Q_1^+)} \big].
\end{align}
Next, it follows from the estimate \eqref{eq4.16bb} with $l=1, j=0, l_0 = j_0 =0$ and Hardy's inequality that
\begin{equation} \label{in-DDx'-0603}
\int_{0}^{1/2} |DD_{x'}u(z', s)|\, ds \leq N\|Du\|_{L_{2}(Q_1^+)}, \quad \forall  z' \in Q_{1/2}'.
\end{equation}
Now, let us denote $\cU = \overline{a}_{di}(x_d)D_i u$. Under the assumptions \eqref{a-c.cond}, \eqref{elli-cond}, it follows from the PDE of $u$ in \eqref{eq11.52} that
\begin{align} \notag 
|D_d\, \cU(z)| & \leq N(d) \big[x_d^{-1/2} \big(|v_t(z)| + \lambda |v(z)|\big) +  {|DD_{x'} u|} \big] \\ \label{Ddd.0603.est}
& \leq  N x_d^{-1/2}\big( \|Du\|_{L_2(Q_1^+)} + \sqrt{\lambda} \|u\|_{L_2(Q_1^+, x_d^{-1})} \big) + {|DD_{x'} u|} .
\end{align}
Then, for each $z= (z', x_d) \in Q_{1/2}^+$, by integrating \eqref{Ddd.0603.est} on $(x_d, 1/2)$ we obtain
\begin{align*} \notag
|\cU(z', x_d)| & \leq |\cU(z', 1/2)|  + \big( \|Du\|_{L_2(Q_1^+)} + \sqrt{\lambda} \|u\|_{L_2(Q_1^+,x_d^{-1})} \big) \int_{x_d}^{1/2} s^{-1/2} ds\\ \notag
& \quad   + {\int_{x_d}^{1/2} |DD_{x'}(z', s)| ds} \label{PDE-D_du} \\
& \leq N \big( \|Du\|_{L_2(Q_1^+)} + \sqrt{\lambda} \|u\|_{L_2(Q_1^+, x_d^{-1})} \big),
\end{align*}
where we used \eqref{Dhatz-0531.est} {and \eqref{in-DDx'-0603}} in the last estimate. 
This estimate, \eqref{Dux'-0606}, the ellipticity condition in \eqref{elli-cond}, and the definition of $\cU$ imply \eqref{Dd-u.est0603}. 
The proof is completed.
\end{proof}

From Corollary \ref{step-1}, Lemma \ref{interior-Linf}, and Lemma \ref{u-x-d.bound}, we obtain the following Corollary on the solution decomposition which will be used in the proof of Theorem \ref{thm-simpl-eqn}.

\begin{corollary} \label{Simple-approx} 
Let $z_0\in \overline{\Omega}_T$ and $r >0$. 
Suppose that $F \in L_{2}(Q_{10r}^+(z_0))^d$, $f \in L_{2}(Q_{10r}^+(z_0), x_d^{-1})$, and $u \in \sH^{1}_2(Q_{10r}^+(z_0))$ is a weak solution of \eqref{eq11.52} in $Q_{10r}^+(z_0)$. 
Then we can write
\[
u(t, x) = g(t, x) + h(t, x) \quad  \text{in}\,\, Q_{10r}^+(z_0),
\]
where $g$ and $h$ are functions in $\sH_2^1(Q_{10r}^+(z_0))$ and satisfy
\begin{equation}  \label{0506-tU-est}
\fint_{Q_{2r}^+(z_0)}|G(z)|^2  \, dz  \leq N \fint_{Q_{10r}^+(z_0)}\Big( |F(z)|^2 +  x_d^{-1}|f(z)|^2) \, dz
\end{equation}
and
\begin{align} \notag
\|H\|_{L_\infty(Q_{r}^+(z_0))}^{2}
& \leq N  \fint_{Q_{10r}^+(z_0)} |U|^2 \, dz \\ \label{0506-W.est}
& \quad + N \fint_{Q_{10r}^+(z_0)} \Big(|F(z)|^2 + x_d^{-1}|f(z)|^2\Big) \, dz,
\end{align}
 where $N = N(d,\nu)>0$ and
\[
\begin{split}
& G(z)=|Dg(z)|+\lambda^{1/2}|x_d^{-1/2}g(z)|,\quad H(z)=|Dh(z)|+\lambda^{1/2}|x_d^{-1/2} h(z)|, \\
& U(z)=|D u(z)|+\lambda^{1/2}|x_d^{-1/2}u(z)|,\quad \text{for } z = (z', x_d) \in Q_{10r}^+(z_0).
\end{split}
\]
\end{corollary}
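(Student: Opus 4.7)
The approach is to decompose $u = g + h$, where $g$ absorbs the inhomogeneity via a Dirichlet problem on the large half-ball, and $h := u - g$ then solves a homogeneous equation to which the pointwise estimates already established in this section apply.

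First, I would construct $g$ as the unique weak solution in $\sH^1_2(Q_{10r}^+(z_0))$ to
\[
\mathcal{L}_0[g] = D_i F_i + \sqrt{\lambda}\,x_d^{-1} f \quad \text{in } Q_{10r}^+(z_0),
\]
with zero initial data at the bottom time-slice of the cylinder, zero Dirichlet data on the curved lateral boundary, and zero trace on $Q_{10r}(z_0) \cap \{x_d = 0\}$ (the latter being automatic from membership in the functional space by Theorem~\ref{zero-trace-1}). Existence and uniqueness follow from the Galerkin argument used in the proof of Theorem~\ref{thm-simpl-eqn-2}, carried out on this bounded cylinder. Testing the equation with $g$ itself and repeating the energy computation from Lemma~\ref{lemma-ener-1} yields
\[
\int_{Q_{10r}^+(z_0)} \bigl(|Dg|^2 + \lambda x_d^{-1}|g|^2\bigr)\,dz \leq N \int_{Q_{10r}^+(z_0)} \bigl(|F|^2 + x_d^{-1}|f|^2\bigr)\,dz.
\]
Since $Q_{2r}^+(z_0) \subset Q_{10r}^+(z_0)$ with volumes comparable up to a dimensional factor, dividing by $|Q_{10r}^+(z_0)|$ and recalling the definition of $G$ delivers \eqref{0506-tU-est}.

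Next, set $h := u - g \in \sH^1_2(Q_{10r}^+(z_0))$, so that $\mathcal{L}_0[h] = 0$ in $Q_{10r}^+(z_0)$ and, when nonempty, $h$ vanishes on $Q_{10r}(z_0) \cap \{x_d = 0\}$. To bound $H = |Dh| + \sqrt{\lambda}\,|x_d^{-1/2} h|$ in $L_\infty(Q_r^+(z_0))$, I would split on the location of $z_0$. In the interior case $x_{0d} \gtrsim r$, the weight $x_d$ is comparable to $r$ on $Q_{2r}(z_0) \subset \Omega_T$, and Lemma~\ref{interior-Linf} applied to $h$ gives the required $L_\infty$--$L_2$ bound directly. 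In the boundary case $x_{0d} \lesssim r$, after translating the center vertically to $\tilde z_0 = (t_0, x_0', 0)$ and enlarging the working radii by a controlled factor, Lemma~\ref{u-x-d.bound} (rescaled from radius $1$ to radius $r$ by the admissible scaling $(t, x) \mapsto (rt, rx)$, which replaces $\lambda$ by $r\lambda$ but preserves the structural form of the estimate) controls $\|Dh\|_{L_\infty}$, while Corollary~\ref{step-1} with $l = l_0 = j = j_0 = 0$, applied to $v := x_d^{-1/2} h$, controls $\sqrt{\lambda}\|v\|_{L_\infty}$. In either case one arrives at
\[
\|H\|_{L_\infty(Q_r^+(z_0))}^2 \leq N \fint_{Q_{2r}^+(z_0)} \bigl(|Dh|^2 + \lambda x_d^{-1}|h|^2\bigr)\,dz.
\]
To finish, the triangle inequalities $|Dh| \leq |Du| + |Dg|$ and $x_d^{-1/2}|h| \leq x_d^{-1/2}|u| + x_d^{-1/2}|g|$, combined with the estimate for $g$ from the first step, dominate the right-hand side by
\[
N \fint_{Q_{10r}^+(z_0)} |U|^2\,dz + N\fint_{Q_{10r}^+(z_0)} \bigl(|F|^2 + x_d^{-1}|f|^2\bigr)\,dz,
\]
which is precisely \eqref{0506-W.est}.

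The main obstacle I anticipate is the last pointwise step, namely ensuring that a single dimensional constant $N$ governs both the interior and the boundary regimes uniformly in the ratio $x_{0d}/r$, and that the rescaling carrying Lemma~\ref{u-x-d.bound} and Corollary~\ref{step-1} from the unit ball to radius $r$ transports the $\sqrt{\lambda}$-weighted norms faithfully under the nonstandard parabolic scaling $(t,x) \mapsto (rt, rx)$. The construction of $g$ and its energy bound are routine adaptations of Lemma~\ref{lemma-ener-1} and Theorem~\ref{thm-simpl-eqn-2} to a bounded cylinder, whereas the pointwise gluing of the interior and boundary estimates is where care is needed.
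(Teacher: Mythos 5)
Your decomposition strategy matches the paper's: construct $g$ absorbing the data, derive the $L_2$ bound for $G$ by an energy estimate, then apply the homogeneous pointwise estimates (Lemma \ref{interior-Linf}, Corollary \ref{step-1}, Lemma \ref{u-x-d.bound}) to $h = u - g$ and close with the triangle inequality. The only genuine difference is where you construct $g$: you pose a local initial--boundary value problem on the cylinder $Q_{10r}^+(z_0)$, whereas the paper instead solves the equation on all of $\Omega_T$ with the data $F$, $f$ multiplied by $\chi_{Q_{8r}^+(\hat z_0)}$ and then simply invokes the already-proved global solvability of Theorem \ref{thm-simpl-eqn-2}. Your route works, but it silently requires a local analogue of Theorem \ref{thm-simpl-eqn-2} on a bounded cylinder (well-posedness with zero initial and lateral data), which the paper never states; it is obtainable by the Galerkin argument inside that theorem's proof, but the paper's cutoff trick avoids having to say so, which is why they prefer it. Also, Lemma \ref{u-x-d.bound} is already stated for general radius $r$, so no rescaling of that lemma is actually needed; only Corollary \ref{step-1} is normalized to $r = 1$. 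Finally, in the boundary case you do need, as the paper does, to recenter at $\hat z_0 = (z_0', 0)$ and use the containments $Q_{2r}^+(z_0) \subset Q_{8r}^+(\hat z_0) \subset Q_{10r}^+(z_0)$ to match the averaging domains in \eqref{0506-tU-est} and \eqref{0506-W.est}; your proposal gestures at this but would benefit from spelling out the inclusions.
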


\begin{proof}  
We write $z_0 = (z_0', x_{d0})$ and we split the proof into two cases.

\medskip

\noindent
{\bf Case I.} Consider $x_{0d} < 2r$. Let $\hat{z}_0 = (z_0', 0)$.
Let $g \in \sH^1_2(\Omega_T)$ be a weak solution of the equation
\begin{equation*}
\begin{split}
& x_d^{-1}  ( {\overline{a}_0(x_d)}g_t  +\lambda {\overline{c}_0(x_d)} g)  - D_i
\big(\overline{a}_{ij}(x_d) D_j g - F_{i}(z)\chi_{Q_{8r}^+(\hat{z}_0)}(z)\big)  \\
& =  \lambda^{1/2} x_d^{-1} f (z) \chi_{Q_{8r}^+(\hat{z}_0)}(z)  \quad \text{in} \  \Omega_T
\end{split}
\end{equation*}
with the boundary condition $g =0$ on $\{x_d =0\}$. 
The existence of $g$ follows from Theorem \ref{thm-simpl-eqn-2}. 
Moreover, we also have
\begin{equation} \label{G-L2.0606}
\fint_{Q_{8r}^+(\hat{z}_0)} |G(z)|^2\, dz \leq N \fint_{Q_{8r}^+(\hat{z}_0)}\big(|F(z)|^2 + x_d^{-1} |f(z)|^2 \big)\, dz.
\end{equation}
Now, as $Q_{2r}^+(z_0) \subset Q_{8r}^+(\hat{z}_0) \subset Q_{10r}^+(z_0)$, \eqref{0506-tU-est} follows from \eqref{G-L2.0606}. 
On the other hand,  let $h = u - g$. 
We see that $h \in \sH_2^1(Q_{8r}^+(\hat{z}_0))$ is a weak solution of
\[
x_d^{-1} (\overline{a}_0(x_d) h_t +\lambda \overline{c}_0(x_d) h)   - D_i \big(\overline{a}_{ij}(x_d) D_j  h\big)   = 0 \quad \text{in} \  Q_{8r}^+(\hat{z}_0)
\]
with the boundary condition $h =0$ on $Q_{8r}(\hat{z}_0) \cap \{x_d =0\}$. 
Then, by Corollary \ref{step-1} and Lemma \ref{u-x-d.bound} and the triangle inequality, we get 
\[
\begin{split}
\|H\|_{L_\infty(Q_{4r}^+(\hat{z}_0))} & \leq N \left(\fint_{Q_{8r}^+(\hat{z})} |H(z)|^2\, dz\right)^{1/2} \\
& \leq N \left(\fint_{Q_{8r}^+(\hat{z})} |U(z)|^2\, dz\right)^{1/2} +  N \left(\fint_{Q_{8r}^+(\hat{z})} |G(z)|^2\, dz\right)^{1/2}.
\end{split}
\]
From this and \eqref{G-L2.0606}, we obtain
\[
\begin{split}
\|H\|_{L_\infty(Q_{4r}^+(\hat{z}_0))}  & \leq N \left(\fint_{Q_{8r}^+(\hat{z})} |U(z)|^2\, dz\right)^{1/2} \\
& \quad + N \fint_{Q_{8r}^+(\hat{z}_0)}\big(|F(z)|^2 + x_d^{-1} |f(z)|^2 \big)\, dz.
\end{split}
\]
As $Q_{2r}^+(z_0) \subset Q_{4r}^+(\hat{z}_0) \subset Q_{8r}^+(\hat{z}_0) \subset Q_{10r}^+(z_0)$,  \eqref{0506-W.est} follows.

\medskip

\noindent
{\bf Case II.} Consider $x_{d0} >2r$. 
We use the same strategy as in {Case I} but it is simpler.
 We directly use the ball $Q_{2r}^+(z_0)$ instead of $Q_{8r}^+(\hat{z}_0)$ and Lemma \ref{interior-Linf}. 
 We skip the details.
\end{proof}

\subsection{Proof of Theorem \ref{thm-simpl-eqn}} 
We prove Theorem \ref{thm-simpl-eqn} in this subsection.

\begin{proof}
When $p =2$, Theorem \ref{thm-simpl-eqn} follows from Theorem \ref{thm-simpl-eqn-2}. 
Therefore, we only need to consider the cases when $p \in (2,\infty)$ and $p \in (1,2)$.

\medskip

\noindent
{\bf Case I.} Consider $p \in (2,\infty)$. 
We first prove the a priori estimate \eqref{apr-est-0606}. 
Let $u \in \sH_{2,\textup{loc}}^1(\Omega_T)$ be a weak solution of \eqref{x-d.model-eqn}.  
By Corollary \ref{Simple-approx} that for every $z_0 \in \overline{\Omega}_T$ and $r >0$, we have
\[
u =g + h \quad \text{in} \  Q_{2r}^+(z_0),
\]
where $g$ and $h$ satisfy \eqref{0506-tU-est} and \eqref{0506-W.est}. 
Then the estimate \eqref{apr-est-0606} follows from the standard real variable argument (see \cite{DK11b} for example). 
We omit the details.

Note also that the uniqueness of solutions follows \eqref{apr-est-0606}. 
Hence, it remains to prove the existence of the solution. For each $k \in \bN$, let
\begin{equation} \label{hat-Q}
\hat{Q}_k:= (-k, \min\{k, T\}) \times B_k^+.
\end{equation}
Also, let
\[ F^{(k)}=F(z) \chi_{\widehat Q_k}(z), \quad \text{and} \quad f^{(k)} = f(z)\chi_{\widehat Q_k}(z).\] 
As $\hat{Q}_k$ is compact, we see that $F^{(k)}\in L_2(\Omega_T)^{d}\cap L_p(\Omega_T)^{d}$. Moreover, by the dominated convergence theorem, $F^{(k)}\to F$ in $L_p(\Omega_T)^d$ as $k\to \infty$. Similarly,  $\{f^{(k)}\}\subset L_2(\Omega_T, x_d^{-1})\cap L_p(\Omega_T, x_d^{-p/2})$ and $f^{(k)}\to f$ in $L_p(\Omega_T, x_d^{-p/2})$ as $k\to \infty$. 
Now, let $u^{(k)} {\in \sH_2^1(\Omega_T)}$ be the weak solution of the equation \eqref{x-d.model-eqn} with $F^{(k)}$ and $ f^{(k)}$ in place of $F$ and $f$, respectively. 
The existence of $u^{(k)}$ follows from Theorem \ref{thm-simpl-eqn-2}. 
By the estimate \eqref{apr-est-0606} that we just proved, we have $u^{(k)}\in \sH^{1}_p(\Omega_T)$. 
Then, by the strong convergence of $\{F^{(k)}\}$ in $L_{p}(\Omega_T)^d$ and $\{f^{(k)}\}$ in $L_{p}(\Omega_T, x_d^{-p/2})$ and the linearity of the PDE in \eqref{x-d.model-eqn}, it is not too hard to show that
$\{u^{(k)}\}$ is a Cauchy sequence in $\sH^{1}_{p}(\Omega_T)$.
 Let $u\in \sH^{1}_{p}(\Omega_T)$ be its limit and by passing to the limit in the weak formulation defined in Definition \ref{weak-form}, it is easily seen that $u$ is a weak solution to the equation \eqref{x-d.model-eqn}.

\medskip

\noindent
{\bf Case II.} Consider $p \in (1, 2)$. 
We first prove the estimate \eqref{apr-est-0606}  assuming that $u \in \sH^1_p(\Omega_T)$ is a weak solution of \eqref{x-d.model-eqn}. The main idea is to use a duality argument. 
Though this is standard, we provide the details here as the equation \eqref{x-d.model-eqn} has some singularity and careful adjustments are needed. 
 
Let $q = {p}/(p-1) \in (2,\infty)$, $B \in L_q(\Omega_T)^d$ and $b \in L_q(\Omega_T, x_d^{-q/2})$. We consider the adjoint problem
\begin{equation} \label{adj-eqn-06}
x_d^{-1}(- \bar a_0 v_t + \lambda \bar c_0 v) - D_i\big( \overline{a}_{ji}(x_d) D_{j} v - B_{i}\chi_{(-\infty, T)}\big)  =  \lambda^{1/2} x_d^{-1} b\chi_{(-\infty, T)},
\end{equation}
in $\bR^{d+1}_+$ with the boundary condition $v = 0$ on $\partial \bR^{d+1}_+$.
By Case I, there exists a unique weak solution $v \in \sH^1_q(\bR \times \bR_+^d)$ of \eqref{adj-eqn-06} and
\begin{equation}\label{eq10.35}
\begin{split}
& \int_{\bR^{d+1}_+} \big(|Dv(z)|^q + \lambda^{1/2} |x_d^{-1/2}v(z)|^q \big) \,dz \\
& \leq N\int_{\Omega_T} \big(|B(z)|^q + |x_d^{-1/2}b(z)|^q \big) \, dz.
\end{split}
\end{equation}
Moreover, by the uniqueness of solutions, $v =0$ for $t \geq T$. Now,
by using $v$ as a test function for the equation \eqref{x-d.model-eqn} and $u$ as a test function for \eqref{adj-eqn-06}, we obtain
\[
\begin{split}
& \int_{\Omega_T}\big(B(z)\cdot D u(z)  + \lambda^{1/2} x_d^{-1} b(z) u(z) \big)\, dz \\
&= \int_{\Omega_T}\big(F(z)\cdot D v (z)+ \lambda^{1/2} x_d^{-1}f(z) v(z) \big)\, dz.
\end{split}
\]
Then, from H\"older's inequality and \eqref{eq10.35}, it follows that
\[
\begin{split}
& \left|\int_{\Omega_T}\big(B(z)\cdot D u(z) + \lambda^{1/2} x_d^{-1}b(z) u (z)\big)\, dz\right| \\
& \leq  \|F\|_{L_p(\Omega)} \|D v\|_{L_q(\Omega_T)} + \lambda^{1/2} \|f\|_{L_{p}(\Omega_T, x_d^{-p/2})} \| v\|_{L_q(\Omega_T, x_d^{-q/2})}\\
& \leq N\Big(\|F\|_{L_p(\Omega)} + \|f\|_{L_{p}(\Omega_T, x_d^{-p/2})}\Big)
\Big( \|B\|_{L_q(\Omega_T)} + \|b\|_{L_q(\Omega_T, x_d^{-q/2})} \Big).
\end{split}
\]
From this last estimate and as $B$ and $b$ are arbitrary, we obtain the a priori estimate \eqref{apr-est-0606}.

Finally, we prove the existence of solution $u \in \sH_p^1(\Omega_T)$. 
We follow the argument in \cite[Section 8]{MR3812104}. 
For $i=1,2,\ldots, d$ and $k \in \bN$, let
$$
F^{(k)}_i=\max\{-k,\min\{k,F_i\}\}\chi_{\widehat Q_k}(z),
$$
where $\hat{Q}_k$ is defined in \eqref{hat-Q}. Then $F^{(k)}\in L_2(\Omega_T)^{d}\cap L_p(\Omega_T)^{d}$. 
Also, by the dominated convergence theorem, $F^{(k)}\to F$ in $L_p(\Omega_T)^d$ as $k\to \infty$. 
Similarly, we define
\[
f^{(k)}(z) = x_d^{1/2}\max\{-k,\min\{k, x_d^{-1/2} f(z)\} \}\chi_{Q_k}(z),
\]
and we see that $\{f^{(k)}\}\subset L_2(\Omega_T, x_d^{-1})\cap L_p(\Omega_T, x_d^{-p/2})$ and $f^{(k)}\to f$ in $L_p(\Omega_T, x_d^{-p/2})$ as $k\to \infty$. 
By Theorem \ref{thm-simpl-eqn-2}, there is a unique weak solution $u^{(k)}\in \sH^{1}_{2}(\Omega_T)$ to the equation \eqref{x-d.model-eqn} with $F^{(k)}$ and $f^{(k)}$ in place of $F$ and $f$, respectively.

\medskip

\noindent
{\bf Claim A.} $u^{(k)} \in \sH_p^1(\Omega_T)$.

\smallskip
As the proof of this claim contains tedious calculations, and we provide it in Appendix \ref{claim-A-proof}. 
From the claim, and the a priori estimate that we just proved, we have
\[
\begin{split}
& \|Du^{(k)}\|_{L_p(\Omega_T)} + \sqrt{\lambda}\|u^{(k)}\|_{L_p(\Omega_T, x_d^{-p/2})} \\
& \leq N \big[\|F^{(k)}\|_{L_p(\Omega_T)} + \|f^{(k)}\|_{L_p(\Omega_T, x_d^{-p/2})} \big].
\end{split}
\] 
From this, the convergences of $F^{(k)}, f^{(k)}$, and the linearity of the PDE \eqref{x-d.model-eqn}, we see that the sequence $\{u^{(k)}\}_k$ is Cauchy in $\sH_p^1(\Omega_T)$. 
Let $u \in \sH_p^1(\Omega_T)$ be its limit. 
Then, it is easy to verify that $u$ is a weak solution of \eqref{x-d.model-eqn}. 
The theorem is proved.
\end{proof}
\section{Equations with partially VMO coefficients} \label{sec:proof-main}
The main goal of this section is to prove Theorem \ref{main-thrm} and Corollary \ref{cor-2}.
We first prove the following decomposition result which is similar to Corollary \ref{Simple-approx}.
\begin{lemma}\label{G-approx-propos}
Let $\gamma \in (0, 1)$,  $r \in (0, \infty)$,  $z_0\in \overline{\Omega}_T$, and  $q \in (2,\infty)$. 
Suppose that $A(z)=  |F(z)| + |x_d^{-1/2}f(z)| \in L_{2}(Q_{10r}^+(z_0))$ and $u \in \sH^{1}_q(Q_{10r}^+(z_0))$ is a weak solution of \eqref{eq11.52} in $Q_{10r}^+(z_0)$. 
Assume that  \eqref{a-BMO-cond} holds with some $\rho_0 >0$ and $\textup{spt}(u) \subset   (s - \rho_0r_0, s + \rho_0r_0) \times \bR^{d}_+$ for some $r_0>0$ and $s \in \bR$.
Then, we have
\[
u(t, x) = g(t, x) + h(t, x) \quad  \text{in}\  Q_{10r}^+(z_0),
\]
where $g$ and $h$ are functions in $\sH_2^1(Q_{10r}^+(z_0))$ that satisfy
\begin{align} \nonumber
\fint_{Q_{2r}^+(z_0)} |G|^2 \, dz &  \leq N \fint_{Q_{10r}^+(z_0)} |A(z)|^{2} \, dz   \\ \label{B-u-tilde-est-inter}
& \qquad +  N(\gamma^{1-2/q}+ r_0^{2-4/q}) \left(\fint_{Q_{10r}^+(z_0)} |Du|^q \, dz \right)^{2/q}
\end{align}
and
\begin{align} \label{D-L-infty-w-inter}
\|H\|_{L_\infty(Q_{r}^+(z_0))}^{2} \leq N \left( \fint_{Q_{10r}^+(z_0)} |U|^{q}\, dz\right)^{2/q} +  N\fint_{Q_{10r}^+(z_0)}  |A(z)|^{2} \, dz,
\end{align}
where $N = N(d, \nu,  q) >0$, and
\[
\begin{split}
& G(z)=|Dg(z)|+ \sqrt{\lambda}|x_d^{-1/2}g(z)|,\quad H(z)=|Dh(z)|+\sqrt{\lambda}|x_d^{-1/2}h(z)|, \\
& U=|D u(z)|+\sqrt{\lambda}|x_d^{-1/2}u(z)|.
\end{split}
\]
\end{lemma}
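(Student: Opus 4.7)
The strategy is to adapt the construction in Corollary \ref{Simple-approx} to the current setting by freezing the VMO coefficients at scale $10r$ and absorbing the resulting coefficient oscillation into a modified right-hand side. Specifically, set $\bar a_{ij}(x_d):=[a_{ij}]_{10r,z_0}(x_d)$ and $\bar c_0(x_d):=[c_0]_{10r,z_0}(x_d)$. By Definition \ref{a-ossi-def}, $\bar a_{id}$ is a constant for each $i=1,\dots,d$, so $(\bar a_{ij})$ satisfies the structural hypothesis \eqref{structure.cond}, and both $(\bar a_{ij})$ and $\bar c_0$ inherit the ellipticity and $L^\infty$ bounds from \eqref{elli} and \eqref{a-c.conds}. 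Let $\mathcal L_0$ be the corresponding frozen operator from \eqref{L-0}. The equation for $u$ in $Q_{10r}^+(z_0)$ can then be rewritten as
\[
\mathcal L_0[u] \;=\; D_i\tilde F_i \,+\, \sqrt{\lambda}\, x_d^{-1}\tilde f,
\]
with $\tilde F_i := F_i + (a_{ij}-\bar a_{ij})D_j u$ and $\tilde f := f + \sqrt{\lambda}(\bar c_0-c_0)u$.

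Next, apply Theorem \ref{thm-simpl-eqn-2} to obtain a global $g\in\sH^1_2(\Omega_T)$ solving
\[
\mathcal L_0[g]=D_i\bigl(\tilde F_i\,\chi_{Q_{10r}^+(z_0)}\bigr)+\sqrt{\lambda}\,x_d^{-1}\tilde f\,\chi_{Q_{10r}^+(z_0)}
\]
in $\Omega_T$ with zero lateral data on $\{x_d=0\}$, and set $h:=u-g$, which is a weak solution of the homogeneous equation $\mathcal L_0[h]=0$ in $Q_{10r}^+(z_0)$ (with $h=0$ on $Q_{10r}(z_0)\cap\{x_d=0\}$ whenever that set is nonempty). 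The pointwise bound \eqref{D-L-infty-w-inter} for $h$ will follow from the boundary Lipschitz estimate in Lemma \ref{u-x-d.bound} (or its interior analogue, Lemma \ref{interior-Linf}) combined with $|H|^2\le 2|U|^2+2|G|^2$, Jensen's inequality to upgrade $L_2$-averages to $L_q$-averages, and the bound on $G$ derived below.

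For \eqref{B-u-tilde-est-inter}, Theorem \ref{thm-simpl-eqn-2} applied to $g$ yields
\[
\fint_{Q_{2r}^+(z_0)}|G|^2\,dz \;\le\; N\fint_{Q_{10r}^+(z_0)}\bigl(|\tilde F|^2+|\tilde f|^2\,x_d^{-1}\bigr)\,dz.
\]
The $F$ and $x_d^{-1/2}f$ contributions are absorbed directly into the $A$-term. For the $(a-\bar a)Du$ piece, H\"older's inequality with exponents $(q/(q-2),q/2)$, together with the uniform bound $|a-\bar a|\le 2\nu^{-1}$ and the VMO hypothesis \eqref{a-BMO-cond}, gives
\[
\fint_{Q_{10r}^+(z_0)}|a-\bar a|^{\tfrac{2q}{q-2}}\,dz \;\le\; N\gamma,
\]
which yields the $\gamma^{1-2/q}$ factor multiplying $\bigl(\fint|Du|^q\bigr)^{2/q}$.

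The main technical obstacle is the $c_0$-perturbation contribution $\lambda\|(c_0-\bar c_0)u\,x_d^{-1/2}\|_{L_2(Q_{10r}^+)}^2$: a direct H\"older argument produces the \emph{$U$-type} quantity $\sqrt{\lambda}\|u\,x_d^{-1/2}\|_{L_q}$ rather than a pure $\|Du\|_{L_q}$ quantity. To convert it into a $Du$-bound with the $r_0^{2-4/q}$ coefficient advertised in \eqref{B-u-tilde-est-inter}, I plan to combine the compact temporal support $\textup{spt}(u)\subset(s-\rho_0 r_0,s+\rho_0 r_0)\times\bR^d_+$ with Hardy's inequality in the $x_d$-direction (as in Remark \ref{Hardy-remark}), using the $r_0$-dependent time-length together with the boundary vanishing of $u$ to gain the needed powers of $r_0$. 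Additional care will be required in the borderline regime $10r>\rho_0$, where the VMO bound must be replaced by the trivial $L^\infty$ oscillation bound and the extra room is made up via the support constraint on $u$; handling this conversion cleanly is the most delicate part of the argument.
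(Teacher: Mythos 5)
Your overall strategy — freeze the coefficients at scale $10r$, absorb the oscillation of $a_{ij}$ and $c_0$ into a modified right-hand side $(\tilde F,\tilde f)$, build $g$ from Theorem~\ref{thm-simpl-eqn-2} (equivalently, invoke Corollary~\ref{Simple-approx} directly) and set $h=u-g$, then estimate $G$ via H\"older and the VMO hypothesis — is exactly what the paper does. You also correctly split the cases $10r<\rho_0$ (use \eqref{a-BMO-cond}) and $10r\ge\rho_0$ (use the temporal support of $u$, with the trivial $L^\infty$ bound on the oscillation).

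The ``main technical obstacle'' you identify, however, is not an obstacle in the paper's argument: it is a pair of typos in the statement of the lemma. The paper's actual intermediate estimate \eqref{G-inter-est} reads
\[
\fint_{Q_{10r}^+(z_0)}\bigl(|b|^2 + |x_d^{-1/2}\tilde f|^2\bigr)\,dz
\;\le\; N\bigl(\gamma^{1-2/q}+ r_0^{1-2/q}\bigr)\Bigl(\fint_{Q_{10r}^+(z_0)}|U|^q\,dz\Bigr)^{2/q}
+ N\fint_{Q_{10r}^+(z_0)}|A|^2\,dz,
\]
i.e.\ it involves $|U|^q$ (not $|Du|^q$) and the power $r_0^{1-2/q}$ (not $r_0^{2-4/q}$). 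This is precisely what the downstream proof of Theorem~\ref{main-thrm} uses (it explicitly displays $\gamma^{1-2/q}+r_0^{1-2/q}$ multiplying $\|Du\|_{L_p}+\sqrt\lambda\|u\|_{L_p(x_d^{-p/2})}$, the $L_p$-norm of $U$). So the version of \eqref{B-u-tilde-est-inter} you should be proving has $|U|^q$ and $r_0^{1-2/q}$; no conversion is required.

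Your proposed conversion — trading $\sqrt\lambda\,\|x_d^{-1/2}u\|_{L_q}$ for $\|Du\|_{L_q}$ via Hardy's inequality and the temporal support — would not work as sketched. Hardy (Remark~\ref{Hardy-remark}) controls $\|x_d^{-1}u\|_{L_q}\lesssim\|Du\|_{L_q}$, with no $\lambda$-dependence, whereas the quantity you need to bound carries an explicit and potentially very large factor $\sqrt\lambda$. Writing $x_d^{-1/2}=x_d^{1/2}\cdot x_d^{-1}$ only introduces a factor of $r^{1/2}$, not a power of $r_0$, and still leaves the uncontrolled $\sqrt\lambda$. There is simply no way to remove the $\sqrt\lambda\,x_d^{-1/2}u$ contribution from the right-hand side; it belongs there, as the $U$-version of the estimate reflects. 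Once you replace $|Du|^q$ by $|U|^q$ and $r_0^{2-4/q}$ by $r_0^{1-2/q}$, your H\"older computation for the $c_0$-perturbation (and the $r\ge\rho_0/10$ case using $\fint\chi_{(s-\rho_0 r_0,\,s+\rho_0 r_0)}\le N r_0$) closes the argument directly, and the rest of your sketch — Corollary~\ref{Simple-approx} for the decomposition and the $L_\infty$ bound \eqref{D-L-infty-w-inter} on $H$ via Lemmas~\ref{interior-Linf} and \ref{u-x-d.bound} — is correct and matches the paper.
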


\begin{proof} Let $b = (b_1, b_2, \ldots, b_d)$ be defined by
\[
b_{i}(t,x) = \chi_{Q_{10r}^+(z_0)}(z) \big( a_{ij}(t,x) - {[a_{ij}]_{10r,z_0}}(x_d)\big)D_j u (t,x)  + F_i(z)\chi_{Q_{10r}^+(z_0)}(z),
\]
for $i=1,2,\ldots, d$, and let 
\[
\tilde{f}(z) =  \big[ f(z) + \sqrt{\lambda} ( [c_0]_{10r, z_0}(x_d) -c_0(z)) u \big] \chi_{Q_{10r}^+(z_0)}(z)
\]
where $[a_{ij}]_{10r, z_0}$ and $[c]_{10r, z_0}$ are defined in Definition \ref{a-ossi-def}.  
We claim that $b \in L_2(\Omega_T)$ and $\tilde{f} \in L_2(\Omega_T, x_d^{-1})$.  
Indeed, if $r \in (0,\rho_0/10)$,  by H\"{o}lder's inequality, \eqref{a-BMO-cond}, and \eqref{elli}, we see 
that
\begin{align*}
 & \fint_{Q_{10r}^+(z_0)} |b(z)|^2 \, dz  \\
& \leq \left(\fint_{Q_{10r}^+(z_0)} |a_{ij} -[a_{ij}]_{10r,z_0}|^{\frac{2q}{q-2}} \, dz \right)^{\frac{q-2}{q}} \left(\fint_{Q_{10r}^+(z_0)} |D u|^{q} \, dz \right)^{\frac{2}{q}} \\ \nonumber
& \qquad +  \fint_{Q_{10r}^+(z_0)} | F |^{2} \, dz \nonumber\\
& \leq N \gamma^{\frac{q-2}{q}} \left(\fint_{Q_{10r}^+(z_0)} | D u |^q \, dz \right)^{2/q} + \fint_{Q_{10r}^+(z_0)} | F |^{2} \, dz.
\end{align*}
Similarly, using \eqref{a-c.conds} and H\"{o}lder's inequality, we obtain
\begin{align*}
& \fint_{Q_{10r}^+(z_0)} |x_d^{-1/2} \tilde{f}(z)|^2 \, dz \\
& \leq N  \gamma^{\frac{q-2}{q}}  \left(\fint_{Q_{10r}^+(z_0)} |\sqrt{\lambda} x_d^{-1/2}u(z)|^q \, dz \right)^{2/q} + \fint_{Q_{10r}^+(z_0)} |x_d^{-1/2} f(z)|^{2} \, dz.
\end{align*}
On the other hand, when $r \geq \rho_0/10$, as $\text{spt}(u) \subset  (s - \rho_0r_0, s + \rho_0r_0) \times \bR^{d}_+$ and by the boundedness of $(a_{ij})$ in \eqref{elli}, we have
\begin{align*} 
& \fint_{Q_{10r}^+(z_0)} |b(z)|^2 \, dz  \\
& \leq N \left(\fint_{Q_{10r}^+(z_0)}  \chi_{(s - \rho_0r_0, s + \rho_0r_0}(t)  \, dz \right)^{\frac{q-2}{q}} \left(\fint_{Q_{10r}^+(z_0)} |D u|^{q} \, dz \right)^{\frac{2}{q}} \\
& \qquad + N \fint_{Q_{10r}^+(z_0)} | F |^{2} \, dz  \nonumber\\
& \leq N \Big(\frac{\rho_0 r_0}{r} \Big)^{\frac{q-2}{q}} \left(\fint_{Q_{10r}^+(z_0)} | D u |^q \,dz \right)^{2/q} +  N \fint_{Q_{10r}^+(z_0)} | F |^{2} \, dz \\
& \leq N r_0^{\frac{q-2}{q}} \left(\fint_{Q_{10r}^+(z_0)} | D u |^q \, dz \right)^{2/q} +  N \fint_{Q_{10r}^+(z_0)} | F |^{2} \, dz.
\end{align*}
By the same way using \eqref{a-c.conds}, we also infer that
\[
\begin{split}
& \fint_{Q_{10r}^+(z_0)} |x_d^{-1/2} \tilde{f}(z)|^2 \, dz \\
& \leq N  r_0^{\frac{q-2}{q}}  \left(\fint_{Q_{10r}^+(z_0)} |\sqrt{\lambda} x_d^{-1/2}u(z)|^q \, dz \right)^{2/q} + \fint_{Q_{10r}^+(z_0)} |x_d^{-1/2} f |^{2} \, dz.
\end{split}
\]
Summing up, we see that
\begin{align} \notag 
& \fint_{Q_{10r}^+(z_0)}\big(|b(z)|^2 + |x_d^{-1/2} \tilde{f}(z)|^2\, \big)dz \\ \label{G-inter-est}
 & \leq N \Big(r_0^{\frac{q-2}{q}} + \gamma^{\frac{q-2}{q}}\Big)
 \left(\fint_{Q_{10r}^+(z_0)} |U(z) |^q \, dz \right)^{2/q} + N \fint_{Q_{10r}^+(z_0)} |A(z)|^{2} \, dz,
\end{align}
for every $r \in (0, \infty)$.

\smallskip
Now, observe that $u \in \sH_q^1(Q_{10r}^+(z_0))$ is weak solution of
\begin{equation*} 
x_d^{-1}  (\partial_t u +\lambda [c_0]_{10r, z_0}(x_d) u) -  D_i \big([a_{ij}]_{10r,z_0}(x_d) D_{j} u - b_i \big)   = \lambda^{1/2} x_d^{-1} \tilde{f},
\end{equation*}
in $Q_{10r}^+(z_0)$ with the boundary condition $u =0$ on $Q_{10r}(z_0) \cap \{x_d =0\}$ when $Q_{10r}(z_0) \cap \{x_d =0\} \not=\emptyset$. From Definition \ref{a-ossi-def}, \eqref{elli}, and \eqref{a-c.conds},  it follows that the coefficients $[a_{ij}]_{10r,z_0}$ and $[c_0]_{10r, z_0}(x_d)$ satisfy the assumptions \eqref{a-c.cond}, \eqref{elli-cond}, and \eqref{structure.cond}. 
Then, by applying Corollary \ref{Simple-approx} and using \eqref{G-inter-est}, the assertions in Lemma \ref{G-approx-propos} follow.
\end{proof}

Now, we are ready to prove Theorem \ref{main-thrm}.

\begin{proof}[Proof of Theorem \ref{main-thrm}] 
Similar to the proof of Theorem \ref{thm-simpl-eqn}, we only need to consider  $p \in (2, \infty)$.  
We first prove the a priori estimate \eqref{main-est-0508} assuming that $u \in \sH_p^1(\Omega_T)$ is a weak solution of \eqref{main-eqn}-\eqref{main-bdr-cond}. We suppose that $\lambda>0$ and assume for a moment that
$$
\textup{spt}(u) \subset  (s - \rho_0r_0, s + \rho_0r_0) \times \bR^{d}_+
$$
with some $s \in (-\infty, T)$ and some $r_0 \in (0,1)$. We claim that \eqref{main-est-0508}  holds when $\gamma$ and  $r_0$ are sufficiently small depending on $d$, $\nu$, and $p$.
 Let $q \in (2, p)$ be fixed. 
 By Lemma \ref{G-approx-propos}, for each $r >0$ and $z_0 \in \overline{\Omega}_T$, we can write
\[
u(t, x) = g(t, x) + h(t, x) \quad  \text{in}\ Q_{2r}^+(z_0),
\]
where $g$ and $h$ satisfy \eqref{B-u-tilde-est-inter} and \eqref{D-L-infty-w-inter}.  
Then it follows from the standard real variable argument,  see \cite{DK11b} for example, that
\[
\begin{split}
& \|Du\|_{L_p(\Omega_T)} + \sqrt{\lambda} \|u\|_{L_p(\Omega_T, x_d^{-p/2})} \\
&  \leq N(\gamma^{1-2/q} + r_0^{1-2/q})\big( \|Du\|_{L_{p}(\Omega_T)} + \sqrt{\lambda} \|u\|_{L_p(\Omega_T, x_d^{-p/2})}\big) \\
& \qquad + N\|F\|_{L_p(\Omega_T)} +N\|f\|_{L_p(\Omega_T, x_d^{-p/2})}
\end{split}
\]
for $N = N(d, \nu, p)>0$. 
Then, by choosing $\gamma$ and $r_0$ sufficiently small so that $N(\gamma^{1-2/q} + r_0^{1-2/q}) < 1/2$, we obtain \eqref{main-est-0508}.

Next, we remove the assumption that $\textup{spt}(u) \subset  (s -\rho_0r_0, s+ \rho_0r_0) \times \bR^{d}_+$. 
The idea is to use a partition of unity argument along the time variable. 
Though it is standard, adjustments are needed and we provide details for completeness. 
Let
$$
\xi=\xi(t) \in C_0^\infty((-\rho_0r_0, \rho_0r_0))
$$
be a standard non-negative cut-off function satisfying
\begin{equation} \label{xi-0515}
\int_{\bR} \xi(s)^p\, ds =1, \quad  \int_{\bR}|\xi'(s)|^p\,ds \leq \frac{N}{(\rho_0r_0)^{p}}.
\end{equation}
For any $s \in (-\infty,  \infty)$, let $u^{(s)}(z) = u(z) \xi(t-s)$ for $z = (t, x) \in \Omega_T$. 
Then $u^{(s)} \in \sH_p^1(\Omega_T)$ is a weak solution of
\[
x_d^{-1}( u^{(s)}_t + \lambda c_0(z) u^{(s)}) -D_i\big(a_{ij} D_j u^{(s)} - F^{(s)}_{i}\big)  = \lambda^{1/2} x_d^{-1} f^{(s)}
\]
in $\Omega_T$ with the boundary condition $u^{s} =0$ on $\{x_d =0\}$, where
\[
F^{(s)}(z) = \xi(t-s) F(z), \quad f^{(s)}(z)   = \xi(t-s) f(z)  +  \lambda^{-1/2}\xi'(t-s) u(z).
\]
As $\text{spt}(u^{(s)}) \subset (s -\rho_0r_0, s+ \rho_0r_0) \times \bR^{d}_{+}$, we can apply the estimate we just proved to infer that
\[
\begin{split}
& \|Du^{(s)}\|_{L_p(\Omega_T)} + \sqrt{\lambda} \|u^{(s)}\|_{L_p(\Omega_T, x_d^{-p/2})}  \\
& \leq N \|F^{(s)}\|_{L_p(\Omega_T)} +N\|f^{(s)}\|_{L_p(\Omega_T, x_d^{-p/2})}.
\end{split}
\]
Then, by integrating the $p$-power of this estimate with respect to $s$, we get
\begin{align}\notag 
& \int_{\bR}\Big( \|Du^{(s)}\|_{L_p(\Omega_T)}^p + \lambda^{p/2} \|u^{(s)}\|^p_{L_p(\Omega_T, x_d^{-p/2})}\Big)\, ds\\  \label{int-0515}
&  \leq N\int_{\bR} \Big( \|F^{(s)}\|^p_{L_p(\Omega_T)} + \|f^{(s)}\|^p_{L_p(\Omega_T, x_d^{-p/2})} \Big)\, ds.
\end{align}
By Fubini's theorem and \eqref{xi-0515}, it follows that
\[
\int_{\bR}\|Du^{(s)}\|_{L_p(\Omega_T)}^p\, ds = \int_{\Omega_T}\int_{\bR} |Du(z)|^p \xi^p(t-s)\, dsdz  = \|Du\|_{L_p(\Omega_T)}^p,
\]
and
\[
\begin{split}
& \int_{\bR}\|u^{(s)}\|_{L_p(\Omega_T,x_d^{-p/2})}^p\, ds = \|u\|_{L_p(\Omega_T, x_d^{-p/2})}^p,  \\
& \int_{\bR}\|F^{(s)}\|_{L_p(\Omega_T)}^p\, ds = \|F\|_{L_p(\Omega_T)}^p.
\end{split}
\]
On the other hand, as $r_0$ depends only on $d$, $\nu$, and $p$, from the definition of $f^{(s)}$, \eqref{xi-0515}, and the Fubini theorem, we have
\[
\begin{split}
\left(\int_{\bR} \|f^{(s)}\|_{L_p(\Omega, x_d^{-p/2})}^p\, ds \right)^{1/p} \leq N  \|f\|_{L_p(\Omega_T, x_d^{-p/2})} +  N\rho_0^{-1} \lambda^{1/2}\|u\|_{L_p(\Omega_T, x_d^{-p/2})}
\end{split}
\]
for $N = N(d, \nu, p)$. 
Then, by combining the estimates we just derived, we infer from \eqref{int-0515} that
\[
\begin{split}
& \|Du\|_{L_p(\Omega_T)} + \sqrt{\lambda} \|u\|_{L_p(\Omega_T, x_d^{-p/2})}\\
&  \leq N\|F \|_{L_p(\Omega_T)} +N\|f\| _{L_p(\Omega_T, x_d^{-p/2})} + N\rho_0^{-1}\lambda^{-1/2}\|u\|_{L_p(\Omega_T, x_d^{-p/2})}
\end{split}
\]
with $N=N(d, \nu, p)$. 
Now we choose $\lambda_0 = 2N$. 
Then, with $\lambda \geq \lambda_0 \rho_0^{-1}$, we have $N\rho_0^{-1}\lambda^{-1/2} \leq \sqrt\lambda/2$, and consequently
\[
\begin{split}
& \|Du \|_{L_p(\Omega_T)}
+ \sqrt{\lambda} \|u \|_{L_p(\Omega_T, x_d^{-p/2})}\\
&  \leq   N \|F \|_{L_p(\Omega_T)} +N \|f\| _{L_p(\Omega_T, x_d^{-p/2})}+\frac{\sqrt{\lambda}}{2} \|u\|_{L_p(\Omega_T, x_d^{-p/2})}.
\end{split}
\]
This estimate yields \eqref{main-est-0508}.

We finally prove the  existence of the solution $u \in \sH_p^1(\Omega_T)$. 
We note that for the class of equations with constant coefficients
\[
x_d^{-1}(u_t + \lambda u) - D_i(D_i u - F_i)  = \lambda^{1/2} x_d^{-1} f 
\]
in $\Omega_T$ with the boundary condition $u=0$ on $\{x_d =0\}$, the existence and uniqueness of solution in $\sH_p^1(\Omega_T)$ is proved in Theorem \ref{thm-simpl-eqn}. 
From this fact, and the method of continuity, we can easily derive the existence of solution for \eqref{main-eqn}-\eqref{main-bdr-cond}. 
The proof is now completed.
\end{proof}

Next, we give the proof of Corollary \ref{cor-2}.

\begin{proof}[Proof of Corollary \ref{cor-2}] 
Recall that we consider the equation
\begin{equation} \label{simplest-eqn-1}
\left\{
\begin{array}{cccl}
u_t + u - x_d \Delta u & = & f &\quad \text{in}  \quad \Omega_T, \\
u & =& 0 & \quad \text{on} \quad \{x_d =0\}.
\end{array} \right.
\end{equation}
We note that when coefficients are constant, with a scaling argument, Theorem \ref{main-thrm} holds for all $\lambda>0$. 
See Theorem \ref{thm-simpl-eqn}  for a similar result with slightly more general equations. 
Then, applying \eqref{main-est-0508} for \eqref{simplest-eqn-1}, we have
\begin{equation} \label{Du-Lp--613}
\|Du\|_{L_p(\Omega_T)} +  \|u\|_{L_p(\Omega_T, x_d^{-p/2})} \leq  N \|f\|_{L_p(\Omega_T, x_d^{-p/2})}.
\end{equation}
Similarly, as $u_t$ solves the same equation as $u$ in \eqref{simplest-eqn-1} with $f$ replaced by $f_t$, we have
\begin{equation} \label{ut-Lp-0613-l}
\|Du_t\|_{L_p(\Omega_T)} +   \|u_t \|_{L_p(\Omega_T, x_d^{-p/2})} \leq  N \|f_t\|_{L_p(\Omega_T, x_d^{-p/2})}.
\end{equation}
Now, let $g(z) = x_d^{-1}[f- u_t - u ]$. For each fixed $t \in (-\infty, T)$, it follows from the  PDE in \eqref{simplest-eqn-1} that $u = u(t, \cdot)$
is a solution of the Poisson equation
\begin{equation} \label{elli-eqn}
\left\{
\begin{array}{cccl}
-\Delta u &=& g & \quad \text{ in } \quad \bR^d_+ ,\\
 u &=& 0 &\quad \text{ on } \partial \bR^d_+.
\end{array} \right.
\end{equation}
As  $x_d^{p/2}$ is an $A_p$ Muckenhoupt weight when $p>2$. We apply the classical weighted Calder\'{o}n-Zydmund estimate for this elliptic equation \eqref{elli-eqn}, and then integrating the result in the time variable to obtain
\begin{equation} \label{DDu-weight}
\begin{split}
\|D^2 u\|_{L_p(\Omega_T, x_d^{p/2})} & \leq N  \|g\|_{L_p(\Omega_T, x_d^{p/2})} \\
 & \leq N \big[ \|f_t\|_{L_p(\Omega_T, x_d^{-p/2})} + \|f\|_{L_p(\Omega_T, x_d^{-p/2})} \big]
\end{split}
\end{equation}
for $N = N(d, p)>0$ and $p >2$, where we also used \eqref{Du-Lp--613} and \eqref{ut-Lp-0613-l} in the last estimate. 
Observe also that \eqref{DDu-weight} also holds when $p=2$ by an energy estimate using integration by parts. 
In fact, we write
\[
-x_d \Delta u = h \quad \text{in} \quad \Omega_T
\]
where $h = f - u_t -u$. Then, multiplying this equation with $D_{x'}^2 u$ and using the integration by parts, we obtain
\[
\int_{\Omega_T} |DD_{x'} u|^2 x_d \,dz \leq \int_{\Omega_T} |h(z)| |D^2_{x'} u| \,dz.
\]
From this, we obtain
\[
\int_{\Omega_T} |DD_{x'} u|^2 x_d \,dz \leq N \int_{\Omega_T} \big[|u_t|^2 + |u|^2 + |f|^2 \big]x_d^{-1} \,dz.
\]
On the other hand, from the PDE in \eqref{simplest-eqn-1}, we also obtain
\[
\int_{\Omega_T} |D^2_{d} u|^2 x_d \,dz \leq N \int_{\Omega_T} \big[|u_t|^2 + |u|^2 + |f|^2 \big]x_d^{-1} \,dz + \int_{\Omega_T} |D_{x'}^2 u|^2 x_d \,dz.
\]
Then, combining these estimates, we obtain \eqref{DDu-weight} for $p=2$. 
Finally, by collecting all estimates \eqref{Du-Lp--613}, \eqref{ut-Lp-0613-l} and \eqref{DDu-weight}, we get \eqref{est-model-eqn}. 
The proof is completed.
\end{proof}

\appendix
\section{Proof of Claim A}  \label{claim-A-proof} 
We follow an approach used in \cite{Dong-Phan}. 
For a fixed $k \in \bN$, let us recall the definition of $\hat{Q}_k$ given in \eqref{hat-Q}. Since $u^{(k)}\in \sH^{1}_{2}(\Omega_T)$, $\hat{Q}_{2k}$ is compact, and $p \in (1,2)$, by H\"older's inequality,
\begin{equation*} 
 \|u^{(k)}\|_{L_p(\widehat Q_{2k}, x_d^{-p/2})}+\|Du^{(k)}\|_{L_p(\widehat Q_{2 k})}<\infty.
\end{equation*}
Hence, we only need to prove that
\[ \|u^{(k)}|\|_{L_p(\Omega_T\setminus \widehat{Q}_{2k}, x_d^{-p/2})}  +\|Du^{(k)}\|_{L_p(\Omega_T\setminus \widehat{Q}_{2k})} <\infty. 
\]
The main idea is to use Theorem \ref{thm-simpl-eqn-2} and a localization technique. For $j\ge 0$,  let $\eta_j$ be such that
\begin{align*}
\eta_j&\equiv 0\quad \text{in}\,\, \widehat Q_{2^jk},  \quad \eta_j \equiv 1 \quad \text{outside}\,\, \widehat Q_{2^{j+1}k},
\end{align*}
and $|D \eta_j|+|(\eta_j)_t|\le N_0 2^{-j}$, where $N_0$ is independent of $j$.  

We note that the supports of $F^{(k)}$ and $f^{(k)}$ are in $\widehat{Q}_{{k}}$, while the supports of $\eta_j$ are outside $\widehat{Q}_{{k}}$.  
Therefore, $\eta_j F_i^{(k)} \equiv \eta_j f^{(k)}  \equiv F_i^{(k)} D_i\eta_j \equiv 0$ for every $i = 1, 2,\ldots, d$ and $j \geq 0$. 
Because of this and with a simple calculation, we see that $w^{(k,l)}:=u^{(k)}\eta_l\in \sH^{1}_{2}(\Omega_T)$ is a weak solution of
\begin{equation*}
x_d^{-1} \big( \overline{a}_0 w^{(k,l)}_t  +\lambda  \overline{c}_0 w^{(k,l)}\big) - D_i\big (\overline{a}_{ij} D_j w^{(k,l)} - F^{(k,l)}_i\big)  =  \lambda^{1/2} x_d^{-1} f^{(k,l)} 
\end{equation*}
in $\Omega_T$, with the boundary condition $w^{(k,l)} =0$ on $\{x_d =0\}$, where
\[
F^{(k,l)}_i  =  u^{(k)}\overline{a}_{ij}D_j \eta_l, \quad i = 1, 2,\ldots, d,
\]
and
\[
 f^{(k,l)} = \lambda^{-1/2}\big(\overline{a}_0 u^{(k)}(\eta_l)_t - x_d\overline{a}_{ij} D_j u^{(k)} D_i\eta_l \big).
 \]
It then follows from Theorem \ref{thm-simpl-eqn-2} that
\begin{align*}
& \|Dw^{(k,l)} \|_{L_2(\Omega_T)}+ \sqrt{\lambda} \|w^{(k,l)}\|_{L_2(\Omega_T, x_d^{-1})}\\
& \le N \|F^{(k,l)}\|_{L_2(\Omega_T)}+N
\|f^{(k,l)}\|_{L_2(\Omega_T, x_d^{-1})} , \quad \forall \ k, l \in \bN.
\end{align*}
This and as $x_d \sim 2^{j}$ in $Q_{2^{j+2}k}\setminus \widehat Q_{2^{j+1}k}$, we see that
\begin{align*}
&\|Du^{(k)}\|_{L_2(\widehat Q_{2^{j+2}k}\setminus \widehat Q_{2^{j+1}k})}+\sqrt{\lambda} \|u^{(k)}\|_{L_2(\widehat Q_{2^{j+2}k}\setminus \widehat Q_{2^{j+1}k}, x_d^{-1})}\\
&\le
N\Big( 2^{-j/2}\|u^{(k)}\|_{L_2(\widehat Q_{2^{j+1}k}\setminus \widehat Q_{2^{j}k}, x_d^{-1})}+ \lambda^{-1/2}2^{-j}
\|u^{(k)}\|_{L_2(\widehat Q_{2^{j+1}k}\setminus \widehat Q_{2^{j}k}, x_d^{-1})}\\
&\quad + \lambda^{-1/2}2^{-j/2}\|Du^{(k)}\|_{L_2(\widehat Q_{2^{j+1}k}\setminus \widehat Q_{2^{j}k})} \Big)\\
&\le C2^{-j/2}\big(\|Du^{(k)}\|_{L_2(\widehat Q_{2^{j+1}k}\setminus \widehat Q_{2^{j}k})}+\sqrt{\lambda} \|u^{(k)}\|_{L_2(\widehat Q_{2^{j+1}k}\setminus \widehat Q_{2^{j}k}, x_d^{-1})}\big)
\end{align*}
for every $j \geq 1$, where $C = C(\lambda, k)>0$. Now, we iterate this last estimate to get 
\begin{align}  \label{eq9.01}
  &\|Du^{(k)}\|_{L_2(\widehat Q_{2^{j+1}k}\setminus \widehat Q_{2^{j}k})}+\sqrt{\lambda} \|u^{(k)}\|_{L_2(\widehat Q_{2^{j+1}k}\setminus \widehat Q_{2^{j}k}, x_d^{-1})}\nonumber\\
&\le C^j2^{-j(j-1)/4}\big(\|Du^{(k)}\|_{L_2(\widehat Q_{2k})}+ \sqrt{\lambda} \|u^{(k)}\|_{L_2(\widehat Q_{2k}, x_d^{-1})}\big).
\end{align}
Then, it follows from \eqref{eq9.01} and H\"{o}lder's inequality that
\begin{align*}
&\|Du^{(k)}\|_{L_p(\widehat Q_{2^{j+1}k}\setminus \widehat Q_{2^{j}k})}+\sqrt{\lambda} \|u^{(k)}\|_{L_p(\widehat Q_{2^{j+1}k}\setminus \widehat Q_{2^{j}k},x_d^{-p/2})}\\
&\le |\widehat Q_{2^{j+1}k}|^{\frac 1 p-\frac 1 2}\big(\|Du^{(k)}\|_{L_2(\widehat Q_{2^{j+1}k}\setminus \widehat Q_{2^{j}k})}+ \sqrt{\lambda} \|u^{(k)}\|_{L_2(\widehat Q_{2^{j+1}k}\setminus \widehat Q_{2^{j}k}, x_d^{-1})}\big)\\
&\le N^j2^{-\frac {j(j-1)} 4}\big(\|Du^{(k)}\|_{L_2(\widehat Q_{2k})}+\sqrt{\lambda} \|u^{(k)}\|_{L_2(\widehat Q_{2k}, x_d^{-1})}\big),
\end{align*}
where $N = N(d, k, p, \lambda)>0$. Therefore,
\[
\begin{split}
&\|Du^{(k)}\|_{L_p(\Omega_T \setminus \widehat Q_{2k})}+\sqrt{\lambda} \|u^{(k)}\|_{L_p(\Omega_T \setminus \widehat Q_{2k}, x_d^{-p/2})}
\\
&=\sum_{j=1}^\infty \Big( \|Du^{(k)}\|_{L_p(\widehat Q_{2^{j+1}k}\setminus \widehat Q_{2^{j}k})}+\sqrt{\lambda} \|u^{(k)}\|_{L_p(\widehat Q_{2^{j+1}k}\setminus \widehat Q_{2^{j}k}, x_d^{-p/2})}\Big)\\
& \leq N \|Du^{(k)}\|_{L_2(\widehat Q_{2k})}
+N\sqrt{\lambda} \|u^{(k)}\|_{L_2(\widehat Q_{2k}, x_d^{-1})} < \infty,
\end{split}
\]
and this proves the claim that $u^{(k)} \in \sH_p^1(\Omega_T)$.

\section{Proof of Theorem \ref{zero-trace-1}} \label{sec-proof-trace}
For $x_d \in (0,1)$ and $y \in (0, x_d)$, by the fundamental theorem of calculus and H\"{o}lder's inequality, we have
\[
\begin{split}
|u(x', x_d)|  & \leq |u(x', y)| + \int_{y}^{x_d} |D_du(x', s)| \,ds\\
& \leq |u(x', y)| + x_d^{\frac{p-1}{p}} \left(\int_{0}^{x_d} |D_du(x', s)|^p\, ds\right)^{1/p}.
\end{split}
\]
Then, by integrating this inequality in $y$ on $(0, x_d)$, we obtain
\[
\begin{split}
& |u(x', x_d)|x_d  \leq \int_0^{x_d}|u(x', y)| \, dy +  x_d^{2-\frac{1}{p}} \left(\int_{0}^{x_d} |D_du(x', s)|^p  \, ds\right)^{1/p} \\
& \leq N x_d^{\frac{3}{2}-\frac{1}{p}}\left(\int_0^{x_d}|u(x', y)|^p y^{-p/2}\, dy \right)^{1/p}+  x_d^{2-\frac{1}{p}} \left(\int_{0}^{x_d} |D_du(x', s)|^p  \, ds\right)^{1/p},
\end{split}
\]
where we used H\"{o}lder's inequality again in the last estimate. 
Therefore, we have
\[
|u(x', x_d)|  \leq N x_d^{\frac{1}{2} -\frac{1}{p}}\left(\int_0^{x_d}|u(x', y)|^p y^{-p/2}\, dy \right)^{1/p} 
+ x_d^{1-\frac{1}{p}} \left(\int_{0}^{x_d} |D_du(x', s)|^p\, ds\right)^{1/p}
\]
for $N = N(p) >0$. Now, we take the $L_p$-norm of the last estimate in $B_1'$, we infer that
\[
\|u(\cdot, x_d)\|_{L_p(B_1')} \leq N x_d^{\frac{1}{2} -\frac{1}{p}}\|u\|_{W^1_p(B_{1}' \times [0,x_d])}.
\]
Then, sending $x_d \rightarrow 0^+$ and using the fact that $p \geq 2$, we obtain
\[
u(x', 0) =0, \quad \text{for a.e.} \quad x' \in B_1'.
\]
Next, we prove $W^1_p(\cD) = \sW^{1}_p (\cD)$. 
By definition, $ \sW^{1}_p (\cD) \subset W^1_p(\cD)$.
Therefore, it suffices to prove that for $u \in W^1_p(\cD)$, there is a sequence of functions $\{u_k\} \subset W^1_p(\cD)$ that vanish near $\{x_d =0\}$ such that $u_k \rightarrow u$ in $W^1_p(\cD)$.
Let $\zeta \in C^\infty([0,\infty))$ be such that
\[
0\leq \zeta \leq 1, \quad \zeta=1  \text{ on } [0,1], \quad \zeta =0  \text{ in } (2,\infty).
\]
Denote by $\zeta_k(x) = \zeta(k x_d)$, and $u_k(x) = u(x)(1-\zeta_k(x))$ for $x\in \bR_+^d$, and $k\in \bN$.
Then,
\[
D_{x'} u_k(x) = (1-\zeta_k(x)) D_{x'} u(x), \quad D_d u_k(x) = (1-\zeta_k(x)) D_{d} u(x) - k \zeta'(kx_d) u(x).
\]
Consequently, 
\begin{align*}
&\|Du_k-Du\|_{L_{p}(\cD)} \\
\leq \ & N \|\zeta_k Du\|_{L_{p}(\cD)}  + N k\left( \int_0^{2/k} \int_{B_1' } |u(x',x_d)|^p \,dz' dx_d \right)^{1/p}\\
\leq \ & N \|\zeta_k Du\|_{L_{p}(\cD)}  \\
&+ N k \left(\int_0^{2/k} x_d^{p-1}\,dx_d\right)^{1/p} \left(\int_0^{2/k} \int_{B_1'} |Du(x',x_d)|^p\,dz\right)^{1/p}\\
\leq \ & N  \left( \int_0^{2/k} \int_{B_1' } |Du(x',x_d)|^p \,dz\right)^{1/p}.
\end{align*}
In the second last inequality above, we used  the fundamental theorem of calculus and H\"{o}lder's inequality to yield
\begin{align*}
|u(x', x_d)| &\leq  \int_{0}^{x_d} |D_d u(x', s)|\,ds
\leq x_d^{\frac{p-1}{p}} \left(\int_{0}^{x_d} |D_du(x', s)|^p \, ds\right)^{1/p}.
\end{align*}
Let $k\to \infty$ to complete the proof.

\end{document}